\documentclass[reqno]{amsart}
\usepackage[english]{babel}
\usepackage{amssymb,enumerate,url}
\usepackage{palatino,mathpazo}
\usepackage{needspace}
\usepackage[pagebackref,colorlinks]{hyperref}
\newcommand{\Beta}{\mathrm{B}}
\newcommand{\llangle}{{\langle\!\langle}}
\newcommand{\rrangle}{{\rangle\!\rangle}}
\newenvironment{enumeratenumeric}{\begin{enumerate}[1.] }{\end{enumerate}}
\numberwithin{equation}{section}
\newtheorem{theorem}{Theorem}[section]
\newtheorem{corollary}[theorem]{Corollary}
\newtheorem{definition}[theorem]{Definition}
\newtheorem{lemma}[theorem]{Lemma}
\newtheorem{proposition}[theorem]{Proposition}
\theoremstyle{remark}
\newtheorem{remark}[theorem]{Remark}
\theoremstyle{plain}

\begin{document}
\begin{sloppypar}

\title[A Converse to the Skoda $L^2$ division theorem II]
{A Converse to the Skoda $L^2$ division theorem II}

\author[Z. Li]{Zhi Li}
\address{Zhi Li: School of Mathematical Sciences and Key Laboratory of Mathematics and Information Networks (Ministry of Education), Beijing University of Posts and Telecommunications, Beijing 100876, China}
\email{lizhi@amss.ac.cn, lizhi10@foxmail.com}

\author[X. Meng]{Xiankui Meng}
\address{Xiankui Meng: School of Mathematical Sciences and Key Laboratory of Mathematics and Information Networks (Ministry of Education), Beijing University of Posts and Telecommunications, Beijing 100876, China}
\email{mengxiankui@amss.ac.cn}

\author[J. Ning]{Jiafu Ning}
\address{Jiafu Ning: School of Mathematics and Statistics, HNP-LAMA, Central South University, Changsha, Hunan 410083, China}
\email{jfning@csu.edu.cn}

\author[X. Zhou]{Xiangyu Zhou}
\address{Xiangyu Zhou: Institute of Mathematics, Academy of Mathematics and Systems Sciences, and Hua Loo-Keng Key
	Laboratory of Mathematics, Chinese Academy of
	Sciences, Beijing 100190, China}
\email{xyzhou@math.ac.cn}

\keywords{Skoda's $L^2$ division, plurisubharmonic functions,
$\bar\partial$-equations, Gauss--Codazzi formula, kernel bundles.}
\subjclass[2020]{32A70, 32C99, 32F17, 32W05}

\thanks{
This work is supported partially by the National Key R\&D Program of China (No.2021YFA1002600 and No.2021YFA1003100).  Z. Li and X. Meng are supported partially by the National Natural Science Foundation of China (No.12271057). J. Ning is supported partially by NSFC Grant No. 12671103. X. Zhou is supported partially by NSFC Grant No. 12288201.
}

\begin{abstract}
We prove a converse to the original Skoda $L^2$ division theorem, asserting that Skoda $L^2$ division estimates for a single nonzero holomorphic function force the weight to be plurisubharmonic. In addition, we establish corresponding results for higher-rank quotient bundles. Finally, we derive an optimal $L^2$ extension theorem directly from the division estimates.
\end{abstract}

\maketitle

\section{Introduction}

Let $D\subset\mathbb C^n$ be a domain.  Given a holomorphic function
$f$ and a holomorphic tuple
$g=(g_1,\ldots,g_m)$ on $D$, the division problem is to determine whether there exists a holomorphic tuple
$h=(h_1,\ldots,h_m)$ on $D$ such that
\[
 f=g\cdot h:=\sum_{j=1}^m g_jh_j.
\]
Oka's theorem \cite{oka1950} solves the qualitative division problem on pseudoconvex domains when $f$ is locally in the ideal generated by the
components of $g$.

Skoda's theorem gives an $L^2$
version of the division theorem, replacing the local ideal membership by a weighted $L^2$ condition, which leads
to a global holomorphic division with an explicit $L^2$ estimate.
Skoda's $L^2$ theorem provides both solvability and $L^2$ estimates for the
division problem. In order to state Skoda's theorem, we now fix some notations used throughout the paper.  Write
\[
 |g|^2=\sum_{j=1}^m|g_j|^2,\qquad
 |h|^2=\sum_{j=1}^m|h_j|^2,\qquad
 p=\min\{n,m-1\}.
\]
For $\varepsilon>0$, set
\begin{equation}\label{equ:skoda-weight}
 \psi_{g,\varepsilon}:=p(1+\varepsilon)\log|g|^2,
\end{equation}
and
\begin{equation}\label{equ:skoda-datum}
 J_{\varphi,g,\varepsilon}(f)
 :=\int_D\frac{|f|^2}{|g|^2}
 e^{-\varphi-\psi_{g,\varepsilon}} dV.
\end{equation}

\begin{theorem}[Skoda \cite{skoda1972,skoda1978}]
\label{thm:skoda-l2-division}
Let $D\subset\mathbb C^n$ be a pseudoconvex domain, $\varphi$ a
plurisubharmonic function on $D$ and $g=(g_1,\ldots,g_m)$
a holomorphic tuple in $\mathcal O(D)^{\oplus m}$ whose components
have no common zero.  Set
$p=\min\{n,m-1\}$.  If $\varepsilon>0$ and
$J_{\varphi,g,\varepsilon}(f)<\infty$, then there exists
$h=(h_1,\ldots,h_m)\in\mathcal O(D)^{\oplus m}$ such that $f=g\cdot h$ and
\begin{equation}\label{equ:classical-skoda}
 \int_D|h|^2e^{-\varphi-\psi_{g,\varepsilon}} dV
 \leqslant\left(1+\frac1\varepsilon\right)
 J_{\varphi,g,\varepsilon}(f).
\end{equation}
\end{theorem}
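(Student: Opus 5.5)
We plan to follow the classical Hilbert space route for Skoda's theorem: solve the division with minimal $L^2$ norm via an a priori estimate for $\bar\partial$ twisted by the quotient bundle. By exhaustion of $D$ by smoothly bounded strictly pseudoconvex domains, regularization of $\varphi$ by a decreasing sequence of smooth strictly psh functions, and a weak-limit argument (the estimates are uniform and the constraint $f=g\cdot h$ passes to weak limits), it suffices to treat the case where $D$ is bounded strictly pseudoconvex, $\varphi$ is smooth, and $g$ is holomorphic near $\overline D$ without zeros there.

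\emph{Setting up.} On the trivial bundle $E=D\times\mathbb C^m$ consider the surjection $g\colon E\to D\times\mathbb C$, $h\mapsto g\cdot h$, with kernel $S=\ker g$. The smooth solution $h_0=f\bar g/|g|^2$ satisfies $g\cdot h_0=f$ and $|h_0|^2=|f|^2/|g|^2$. We look for $h=h_0-u$ with $u$ a section of $S$ and $\bar\partial u=\bar\partial h_0=:v$. Here $v$ is an $S$-valued $(0,1)$-form (indeed $g\cdot\bar\partial h_0=\bar\partial f=0$), and it is $\bar\partial$-closed. Writing the weight $\varphi+\psi_{g,\varepsilon}$, we need to solve $\bar\partial u=v$ in $S$ with $\int|u|^2e^{-\varphi-\psi}$ controlled. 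We then take $h$ to be the $L^2$-minimal holomorphic solution, i.e. $h\perp$ holomorphic sections of $S$.

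\emph{Key estimate.} The curvature of $S$ with the induced metric is given by the Gauss--Codazzi formula as $-\beta^*\wedge\beta$, where $\beta$ is the second fundamental form. For $q=1$ forms with values in $S$, Skoda's pointwise inequality states that
\[
\langle[i\Theta_S,\Lambda]\alpha,\alpha\rangle\geqslant -p\,\langle[i\partial\bar\partial\log|g|^2,\Lambda]\alpha,\alpha\rangle\cdot(\text{suitable form}).
\]
This follows from a Cauchy--Schwarz argument on the rank of $\beta$, which is where $p=\min\{n,m-1\}$ appears. Adding the weight $e^{-\varphi-\psi}$ with $\psi=p(1+\varepsilon)\log|g|^2$ gives total curvature at least $\varepsilon p\, i\partial\bar\partial\log|g|^2\otimes\mathrm{Id}$. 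Next we verify the twisted Hörmander condition $|\langle v,\alpha\rangle|^2\leqslant C\langle[\Theta,\Lambda]\alpha,\alpha\rangle$. To do this we compute $v=\bar\partial h_0$ pointwise: $|v|^2$ is controlled by $\frac{|f|^2}{|g|^2}$ times the norm of $\bar\partial(\bar g/|g|^2)$ projected to $S$, and this matches $i\partial\bar\partial\log|g|^2$ applied to $|f|^2/|g|^2$. The outcome is
\[
\int_D|u|^2e^{-\varphi-\psi}\leqslant\frac1{\varepsilon}\int_D\frac{|f|^2}{|g|^2}e^{-\varphi-\psi}=\frac1\varepsilon J_{\varphi,g,\varepsilon}(f).
\]
Since $h_0\perp S$ pointwise, we get $\|h\|^2=\|h_0\|^2+\|u\|^2$, and this yields exactly the constant $1+1/\varepsilon$.

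\emph{Main obstacle.} The hardest step is the pointwise algebraic inequality comparing the curvature of $S$ (via Gauss--Codazzi) with $p\, i\partial\bar\partial\log|g|^2$ acting on $S$-valued $(0,1)$-forms, together with the sharp pointwise bound for $v$. The factor $p$ comes from the fact that $\beta$ has rank at most $\min\{n,m-1\}$. My plan is to diagonalize at a point: choose coordinates with $g=(|g|,0,\dots,0)$, write $\alpha=\sum\alpha_{j\bar k}$, and apply Cauchy--Schwarz over at most $p$ nonzero terms. An alternative is to use the twisted Ohsawa--Takegoshi style estimate with auxiliary weights $\eta,\lambda$ to avoid this algebra, but I would try the direct computation first.
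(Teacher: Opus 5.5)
Your outline is correct and is the standard Skoda argument that the paper cites rather than reproves. The reduction to the kernel equation $\bar\partial_S u=\beta^*f$, with $|h|^2=|f|^2/|g|^2+|u|^2$, is exactly Proposition~\ref{prop:dbar}, and your curvature step is Skoda's Lemme Fondamental, i.e.\ $\langle B_{\psi,g}^{-1}\beta^*\wedge dz,\beta^*\wedge dz\rangle\leqslant 1/(\varepsilon|g|^2)$ for $\psi=\psi_{g,\varepsilon}$, which the paper invokes in the proof of Theorem~\ref{thm:equivalence}.

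Your two ``hardest'' pointwise steps are really a single inequality. Let $V$ and $A$ be the $m\times n$ matrices whose columns are the $d\bar z_k$-coefficients of $\beta^*$ and of $\alpha$, with columns in $S$. Since $v=-f\beta^*$, one has $\langle[i\beta^*\wedge\beta,\Lambda]\alpha,\alpha\rangle=-|g|^2|\operatorname{tr}(V^*A)|^2$, $|\langle v\wedge dz,\alpha\rangle|^2=|f|^2|\operatorname{tr}(V^*A)|^2$, and $\langle[i\partial\bar\partial\log|g|^2\otimes\mathrm{Id},\Lambda]\alpha,\alpha\rangle=|g|^2\operatorname{tr}(V^*VA^*A)$. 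Hence both steps follow from $|\operatorname{tr}(AV^*)|^2\leqslant\operatorname{rank}(AV^*)\,\|AV^*\|_{\mathrm{HS}}^2\leqslant p\operatorname{tr}(V^*VA^*A)$.
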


Theorem \ref{thm:skoda-l2-division} gives an analytic criterion for ideal membership.
It underlies the analytic form of the Brian\c{c}on--Skoda
phenomenon \cite{briancon-skoda1974} as well as effective forms of the
 Nullstellensatz \cite{brownawell1987,ein-lazarsfeld1999}.
With Skoda's result, Siu established the deformation invariance of plurigenera and the finite
generation of the canonical ring of complex projective manifolds of
general type (see {\cite{siu1998,siu2002,siu2004,siu2005,siu2009tech}}). Skoda-type division theorems have
also been generalized to the Koszul complex and
exact sequences \cite{ji2012,ji2013}.\\

Zhou and his coauthors systematically initiated the study of the converse problems in $L^2$ theory, which have recently attracted considerable
attention in several complex variables.  The guiding question is whether an
analytic result with $L^2$ estimates known to follow from plurisubharmonicity or curvature
positivity can, conversely, recover that positivity.  Optimal and multiple
coarse $L^2$ extension properties characterize plurisubharmonic functions,
while $L^2$ existence for $\bar\partial$ with optimal $L^2$ estimates or its vector-bundle
analogue characterizes Griffiths or Nakano positivity
\cite{deng-ning-wang2021,deng-ning-wang-zhou2023,deng-wang-zhang-zhou2024new-char}.  

In
\cite{li-meng-ning-zhou2025arxiv}, we obtained a converse to the strong
$L^2$ division theorem using a formally stronger $L^2$ estimate in place of Skoda's original estimate.  The present paper, which is a sequel to \cite{li-meng-ning-zhou2025arxiv}, shows that the original Skoda
estimate also detects plurisubharmonicity.\\

Our first main result is the converse to the classical
Skoda $L^2$ division theorem, i.e., Theorem \ref{thm:skoda-l2-division}. To state our theorem more concisely, we make the following definition.

\begin{definition}
    Let $\varphi\in L^1_{\mathrm{loc}}(D)$ and $f\in\mathcal O(D)$. We say that $f$ is \emph{Skoda divisible} with
 respect to $\varphi$ if, for every $m\geqslant1$, every holomorphic tuple
 $g\in\mathcal O(D)^{\oplus m}$ whose components have no common zero, and
 every $\varepsilon>0$ satisfying
 $J_{\varphi,g,\varepsilon}(f)<\infty$, there is a holomorphic tuple
 $h\in\mathcal O(D)^{\oplus m}$ such that $g\cdot h=f$ and
 \[
 \int_D|h|^2e^{-\varphi-\psi_{g,\varepsilon}} dV
 \leqslant\left(1+\frac1\varepsilon\right)
 J_{\varphi,g,\varepsilon}(f).
 \]
\end{definition}
Denote by $A^2(D,\varphi)$ the Bergman space with respect to $\varphi$. With these notations, the converse of Theorem \ref{thm:skoda-l2-division} can be stated as follows.

\begin{theorem}
\label{thm:main-inverse-general}
Let $D\subset\mathbb C^n$ be a bounded domain and $\varphi\in C^2(D)$.
Assume that there is a non-zero $f\in A^2(D,\varphi)$ that is Skoda
divisible with respect to $\varphi$.  Then $\varphi$ is
plurisubharmonic on $D$.
\end{theorem}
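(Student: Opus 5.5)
We argue by contradiction and reduce to a local second-order computation. Suppose $\varphi$ is not plurisubharmonic. Since $\varphi\in C^2(D)$, there are a point $z_0\in D$ and a unit vector $\xi\in\mathbb C^n$ with $\sum_{j,k}\varphi_{j\bar k}(z_0)\xi_j\bar\xi_k<0$. Because $f\not\equiv0$ and the nonzero set of $f$ is open and dense, continuity lets us move $z_0$ slightly so that in addition $f(z_0)\neq0$. After a unitary change of coordinates and a translation we may take $z_0=0$ and $\xi=e_1$. We then test Skoda divisibility against a family of tuples $g$ with $m=2$ (so $p=1$) that have no common zero on $D$ and concentrate near $0$ in the $z_1$-direction. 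A natural choice is
\[
g=(z_1-w,\ t),\qquad w\in\mathbb C,\ t>0 \text{ small}.
\]
Then $|g|^2=|z_1-w|^2+t^2$ never vanishes, and $J_{\varphi,g,\varepsilon}(f)<\infty$ holds automatically because $D$ is bounded and $f\in A^2(D,\varphi)$.

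The key structural step is to rewrite the division estimate as a dual inequality. Every solution of $g\cdot h=f$ has the form $h=h_0+(t,-(z_1-w))u$ with $u$ holomorphic and a fixed $h_0$, for instance $h_0=(0,f/t)$. So divisibility says that the minimum, over holomorphic $u$, of a weighted $L^2$ norm is at most $(1+1/\varepsilon)J$. By the usual Hilbert-space duality (as in the proofs of the converse $L^2$ extension and $\bar\partial$ results of \cite{deng-ning-wang2021,deng-ning-wang-zhou2023}), this minimal norm equals a supremum of $|\langle h_0,\alpha\rangle|^2$ over $L^2$ sections $\alpha$ orthogonal to the image, normalized by $\|\alpha\|$. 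The plan is to choose $\alpha=\alpha_{t,w}$ explicitly, built from $\bar g/|g|^2$ multiplied by a cutoff and by $e^{\varphi}$ times a local holomorphic correction. The point is to make the pairing pick up the Taylor expansion of $\varphi$ to second order at $w$ along $z_1$. The guiding model is the computation in one variable: the extremal quotient-bundle norm for $g=(z_1-w,t)$ is governed by the curvature $\partial\bar\partial\log(|z_1-w|^2+t^2)$, which concentrates like a mollified Dirac mass at $w$ as $t\to0$.

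Next we expand both sides as $t\to0$ (and then let $\varepsilon\to0$). Writing $\varphi$ near $0$ as $\operatorname{Re}$ of a holomorphic quadratic polynomial plus $\varphi_{1\bar1}(0)|z_1|^2$ plus the remaining Hermitian terms plus $o(|z|^2)$, we absorb the pluriharmonic part into $f$ and the test functions. We then compare the two sides using scaling $z_1=w+t\zeta$. The leading terms should cancel exactly; this is the optimality of the constant $1+1/\varepsilon$, which is sharp for the flat weight. The first correction term should be proportional to $t^2\varphi_{1\bar1}(w)$, with the wrong sign when $\varphi_{1\bar1}<0$. This gives a violation of the inequality for small $t$, and hence the contradiction.

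The main obstacle is exactly this second-order comparison. Skoda's inequality carries the loss factor $(1+1/\varepsilon)$ and the extra weight $|g|^{-2\varepsilon}$, so a naive test is not sharp enough to see the curvature term. We would first try to let $\varepsilon\to0$ and $t\to0$ jointly with $\varepsilon$ proportional to a power of $t$, which makes the constant sharp to the needed order. If that fails, the fallback is to take many components $g=(z_1-w,t,\dots)$, or to use $p=n$ with $g$ built from all coordinates, so that $\psi_{g,\varepsilon}$ itself provides the concentrating weight. In that version the Skoda estimate reduces to an optimal $L^2$ extension estimate from the point $w$, in the spirit of the optimal extension characterization of plurisubharmonicity cited in the introduction. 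From that extension estimate, plurisubharmonicity follows by the known converse $L^2$ extension argument.
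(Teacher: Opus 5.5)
There is a genuine gap. The test family you chose cannot see $\varphi_{1\bar1}$ at a point when $n\geqslant2$, and the regime $\varepsilon\to0$ cannot see curvature at all.

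\textbf{The slice obstruction.} With $g=(z_1-w,t)$ you have $m=2$ and $p=1$. The Skoda factor $(|z_1-w|^2+t^2)^{-1-\varepsilon}$ therefore localizes only in the $z_1$-direction, and nothing localizes in $z''=(z_2,\dots,z_n)$. The second-order term is not $t^2\varphi_{1\bar1}(w)$. It is $t^2$ times a $|f|^2e^{-\varphi}$-weighted average of $\varphi_{1\bar1}$ over the whole slice $\{z_1=w\}\cap D$, and that average can be positive while $\varphi_{1\bar1}(w,0)<0$.

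A better dual element $\alpha$ does not fix this. Take $\varphi=\kappa(z_2)|z_1|^2$ on a bidisc, $f=f(z_2)$ and $w=0$. Rotation invariance in $z_1$ shows that the exact minimal divisor is $h=(0,f/t)$. For $\varepsilon>1$ and small $t$, the Skoda inequality then holds as soon as $\int|f(z_2)|^2\kappa(z_2)\,dV(z_2)>0$, which is compatible with $\kappa(0)<0$. In the paper's framework the same obstruction shows up as follows: a transverse cut-off costs $|\bar\partial\chi|^2|\beta^*|^2$ in $\|\partial^*\alpha\|^2$, and this swamps $\varphi_{1\bar1}|\beta^*|^2$.

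More generally, for $m\leqslant n$ the function $|g|$ has no strict interior local minimum. A holomorphic map $D\to\mathbb C^m$ is either open at a point or has a positive-dimensional fibre through it. Hence the Skoda weight never concentrates at a point. This is why the paper uses $g=(z_1,\dots,z_n,M)$, and Proposition~\ref{rmk:r-generators} with Remark~\ref{rmk:partial-trace} isolates exactly this slice obstruction.

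The price of using $n+1$ generators is that only $\operatorname{tr}i\partial\bar\partial\varphi(0)$ is detected. A single direction is then recovered by general $\mathrm{GL}(n,\mathbb C)$ coordinate changes, not unitary ones. This uses Lemma~\ref{lem:affine} and the inequality $\operatorname{tr}(H_\varphi P)\geqslant0$ with $P=\delta\,\mathrm{Id}+vv^*$.

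\textbf{The wrong regime.} Letting $\varepsilon\to0$ fails even for $n=1$. Take $\varphi=-a|z|^2$ with $a>0$ on the disc $\{|z|<R\}$, $f=1$ and $g=(z,t)$. The extremal divisor is again $h=(0,1/t)$. Two integrations by parts show that Skoda's inequality is equivalent to
\[
 a\int_0^{R^2}e^{as}\,s\,(t^2+s)^{-1-\varepsilon}\,ds\leqslant e^{aR^2}R^2(t^2+R^2)^{-1-\varepsilon}.
\]
As $t,\varepsilon\to0$ the two sides tend to $e^{aR^2}-1$ and $e^{aR^2}$. So the test is passed even though $\varphi$ is strictly superharmonic.

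The reason is a comparison of orders. The boundary slack has relative size about $(t/R)^{2\varepsilon}$. The curvature contribution is local, of relative size about $at^2$, only when $\varepsilon>1$; for $\varepsilon<1$ it is a global integral. The paper therefore works at the opposite end: $\varepsilon\to\infty$ with $M$ fixed. There the $O(\varepsilon)$ terms cancel up to $o(1)$ by the variance bound in Proposition~\ref{prop:key-estimate}. This gives $\operatorname{tr}i\partial\bar\partial\varphi(0)\geqslant-n^2/M^2$, and then $M\to\infty$.

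\textbf{The fallback.} Your fallback does not close the gap. Skoda divisibility of one fixed $f$ does not give an optimal $L^2$ extension property at points, since that property concerns extending arbitrary prescribed values. Section~\ref{sec:optimal-extension} obtains extension only under the far stronger hypothesis that every $F$ on every $U\Subset D$ is Skoda divisible.
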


Theorem \ref{thm:main-inverse-general} gives the equivalence between the plurisubharmonicity and Skoda divisibility in pseudoconvex domains. In our previous work \cite{li-meng-ning-zhou2025arxiv}, we gave an equivalence between the plurisubharmonicity and (strongly) $L^2$ divisibility.  In \cite{li-meng-ning-zhou2025arxiv}, an $f\in\mathcal{O}(D)$ is said to be strongly $L^2$ divisible with respect to $\varphi$ if
for every $m\geqslant1$, every holomorphic tuple
 $g\in\mathcal O(D)^{\oplus m}$ whose components have no common zero, and
 every smooth plurisubharmonic function $\psi$ for which
 $B_{\psi,g}=\left[i\partial\bar\partial\psi\otimes
 \operatorname{Id}_{\underline{\mathbb C}^{m}}
 +i\beta^*\wedge\beta,\Lambda\right]>0$, the finiteness of
 \[
 I_{\varphi,\psi,g}(f)
 =\int_D\left\{\frac1{|g|^2}
 +\left\langle B_{\psi,g}^{-1}\beta^*\wedge dz,
 \beta^*\wedge dz\right\rangle\right\}
 |f|^2e^{-\varphi-\psi} dV
 \]
 implies that
 there is a holomorphic tuple $h\in\mathcal O(D)^{\oplus m}$ such that
 $g\cdot h=f$ and
 \[
  \int_D|h|^2e^{-\varphi-\psi} dV
  \leqslant I_{\varphi,\psi,g}(f).
 \]

\begin{theorem}[\cite{li-meng-ning-zhou2025arxiv}]
\label{thm:strong-converse}
Let $D\subset\mathbb C^n$ be bounded and $\varphi\in C^2(D)$.  If there exists some
non-zero $f\in A^2(D,\varphi)$ that is strongly $L^2$ divisible with
respect to $\varphi$, then $\varphi$ is plurisubharmonic on $D$.
\end{theorem}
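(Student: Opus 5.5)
We argue by contradiction. Suppose $\varphi$ is not plurisubharmonic. Since $\varphi\in C^2(D)$, there are then a point $z_0\in D$ and a unit vector $\xi\in\mathbb C^n$ with $\partial\bar\partial\varphi(z_0)(\xi,\bar\xi)<0$. This strict inequality persists on a small ball around $z_0$. The zero set of the non-zero function $f$ is nowhere dense, so after moving $z_0$ slightly we may assume $f(z_0)\neq0$. All remaining work is local near $z_0$. The aim is to build a tuple $g$ and a weight $\psi$ for which the strong $L^2$ divisibility estimate fails.

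\textbf{Step 1: reduce divisibility to a $\bar\partial$-type estimate.} Take the rank-two tuple
\[
g=(1,\,a\,\ell),\qquad \ell(z)=\langle z-z_0,\xi\rangle ,
\]
with a small parameter $a>0$. Its components have no common zero. The pointwise minimal solution of $g\cdot h=f$ is $h_0=f\bar g/|g|^2$. Every holomorphic solution has the form $h=h_0+u\,(-a\ell,1)/|g|$-type correction, where the correction solves a $\bar\partial$-equation with right-hand side built from $f\,\bar\partial(\bar g/|g|^2)$, i.e.\ from $\beta^*\wedge dz$ times $f$. The term $\int|h_0|^2e^{-\varphi-\psi}$ equals the first part $\int|f|^2|g|^{-2}e^{-\varphi-\psi}$ of $I_{\varphi,\psi,g}(f)$, and the correction is orthogonal to $h_0$ pointwise. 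Hence the hypothesis says exactly this: the $\bar\partial$-equation in the quotient (kernel) bundle with data $v=f\,\beta^*\wedge dz$ admits a solution with
\[
\|u\|^2_{\varphi+\psi}\leqslant\int_D\langle B_{\psi,g}^{-1}v,v\rangle e^{-\varphi-\psi}\,dV .
\]
This is an optimal Hörmander estimate, but with the curvature of the kernel bundle (Gauss--Codazzi: $i\partial\bar\partial\psi$ plus the second fundamental form term $i\beta^*\wedge\beta$) replacing $i\partial\bar\partial\psi$ alone.

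\textbf{Step 2: localize and contradict.} We choose $\psi$ to concentrate everything at $z_0$. Concretely, take $\psi=N|z-z_0|^2$ plus a small quadratic correction in the $\xi$-direction, possibly together with cut-offs times large constants, and let $a\to0$. Then $\beta^*\wedge dz\approx a\,d\ell$ and $i\beta^*\wedge\beta=O(a^2)$, and the inequality of Step 1 becomes, to leading order in $a$, an optimal $L^2$ estimate for $\bar\partial u=a f\,d\bar\ell$-type data with weight $\varphi+\psi$. We then pair the $\bar\partial$-equation against a test form: dualizing the existence statement gives $|\langle v,\alpha\rangle|^2\leqslant(\int\langle B^{-1}v,v\rangle)\,\|\bar\partial^*\alpha\|^2$ for $\alpha$ in the domain, in the spirit of Deng--Ning--Wang. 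We choose $\alpha$ compactly supported near $z_0$, of the form $\chi\,f\,d\bar\ell\,e^{\varphi+\psi}$ rescaled. By the Bochner--Kodaira identity, $\|\bar\partial^*\alpha\|^2$ differs from the curvature integral by a term involving $\partial\bar\partial\varphi(\xi,\bar\xi)$. Expanding by Taylor at $z_0$, using $\varphi\in C^2$ and $f(z_0)\ne0$, and letting $N\to\infty$ with the shrinking scale, the leading terms should give $\partial\bar\partial\varphi(z_0)(\xi,\bar\xi)\geqslant0$, which is the desired contradiction. Boundedness of $D$ and $f\in A^2(D,\varphi)$ ensure that every $I_{\varphi,\psi,g}(f)$ used is finite.

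\textbf{Main obstacle.} The hard part is Step 2. The data $v=f\beta^*\wedge dz$ are not arbitrary: they are constrained to the special Koszul form. So the general characterization that optimal $\bar\partial$-estimates force plurisubharmonicity cannot be quoted directly. It must be re-run using only the freedom in $g$ (through $\ell$, $\xi$, $a$) and in $\psi$. I would first try to show that, as $\xi$ and the center vary and $a\to0$, these special data suffice in the duality argument. The delicate bookkeeping is that the error terms of order $a^2$ coming from $i\beta^*\wedge\beta$ are dominated, after the limits $a\to0$ and $N\to\infty$, by the negative quantity $\partial\bar\partial\varphi(z_0)(\xi,\bar\xi)$ times $|f(z_0)|^2$.
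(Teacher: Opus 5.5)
\textbf{There is a genuine gap: Step~2, which carries the whole proof, is asserted rather than carried out, and the one concrete test form you propose does not work.}

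Your framework is sound. You recast the hypothesis as an estimate for $\bar\partial_S u=f\beta^*$ on the kernel bundle, localize with the independent weight $\psi$, pair against a compactly supported test form, and apply Bochner--Kodaira--Nakano with the Gauss--Codazzi curvature. The paper does not reprove this theorem; it cites it. Its remarks (the strong hypothesis supplies an independent plurisubharmonic weight and the inverse-curvature norm) suggest the cited proof follows essentially this route. By contrast, in the paper's own classical argument, Proposition~\ref{prop:trace}, the weight is forced to be $n(1+\varepsilon)\log|g|^2$. That is why the paper needs $n+1$ generators, a trace inequality and affine invariance.

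Step~2 ends with ``should give'', and your choice $\alpha=\chi f\,d\bar\ell\,e^{\varphi+\psi}$ fails. With it, the pairing $\llangle v,\alpha\rrangle_{\varphi+\psi}$ becomes an unweighted integral. Meanwhile $\|\bar\partial^*\alpha\|^2_{\varphi+\psi}$ carries the factor $e^{+\psi}$. So the mass sits where $\psi$ is largest, at the edge of $\operatorname{supp}\chi$, and no information about the Levi form at $z_0$survives. Separately, cut-offs times large constants are not plurisubharmonic. An admissible $\psi$ must be plurisubharmonic, with $B_{\psi,g}>0$ on all of $D$.

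You also misplace the difficulty. The $O(a^2)$ terms from $i\beta^*\wedge\beta$ are harmless: they are part of $B_{\psi,g}$ and cancel. The real issue is one of scales. With $\psi=N|z-z_0|^2$, the $\psi$-curvature exceeds the $\varphi$-curvature by a factor $N$. So the leading terms of the dual inequality must cancel exactly, which essentially forces $\alpha$ to be the Cauchy--Schwarz extremal $\alpha=\chi B_{\psi,g}^{-1}v$, where $v=f\beta^*\wedge dz$.

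Set $X=\int_D\chi\langle B_{\psi,g}^{-1}v,v\rangle e^{-\varphi-\psi}dV$ and $E=\int_D(1-\chi)\langle B_{\psi,g}^{-1}v,v\rangle e^{-\varphi-\psi}dV$. Duality, Bochner--Kodaira--Nakano (dropping $-\|\bar\partial\alpha\|^2$) and $\chi^2\leqslant\chi$ give
\[
X^2\leqslant(X+E)\Big(\|\partial^*\alpha\|^2_{\varphi+\psi}+\int_D\chi^2\big\langle[i\partial\bar\partial\varphi\otimes\mathrm{Id},\Lambda]\alpha,\alpha\big\rangle e^{-\varphi-\psi}dV+X\Big),
\]
in which $X^2$ cancels. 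Normalize by $\int_D|f|^2e^{-\varphi-\psi}dV$. Then:
\begin{itemize}
\item $X\approx a^2/N$;
\item the $\varphi$-term is $\approx a^2N^{-2}\,\partial\bar\partial\varphi(z_0)(\xi,\bar\xi)$;
\item $\|\partial^*\alpha\|^2=O(N^{-3})$, because $f$ is holomorphic and the $\bar z$-derivatives of the coefficients of $B_{\psi,g}^{-1}\beta^*$ vanish at $z_0$;
\item $E/X=O(N^ne^{-cN})$.
\end{itemize}
Letting $N\to\infty$ gives $\partial\bar\partial\varphi(z_0)(\xi,\bar\xi)\geqslant0$, the desired contradiction.

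Admissibility holds once $N>a^2$, and any fixed $a$ works. Since $\beta^*$ is a multiple of $d\bar\ell$, the direction $\xi$ is tested directly. So you need no limit $a\to0$, no correction to $\psi$, and no trace-plus-affine-invariance step.
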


If one chooses $\psi = p (1 + \varepsilon) \log | g |^2$, then it follows from Skoda's Lemme
  Fondamental {\cite{skoda1972}} that the strongly $L^2$ divisibility  implies the Skoda divisibility. However, on bounded pseudoconvex domains, these two properties are both equivalent to the plurisubharmonicity by Theorem \ref{thm:main-inverse-general} and \ref{thm:strong-converse}.

If \(g=(g_1,\dots,g_m)\) has no common zero, then \(g\) induces a surjective holomorphic bundle morphism from the trivial vector bundle \(\underline{\mathbb C}^{\,m}\) to the trivial line bundle \(\underline{\mathbb C}\). In this case, the division theorem can be equivalently reformulated as the surjectivity of the induced map on sections. More generally, one can consider the $L^2$ division problem in the setting of vector bundles.

\begin{theorem}[Skoda \cite{skoda1978}, see also \cite{demailly-bigbook}]\label{thm:skoda-vector-bundle}
  Let $X$ be a Stein manifold, $g:E\rightarrow Q$  a surjective morphism of
  hermitian holomorphic vector bundles and $(L,h)\rightarrow X$ a hermitian holomorphic
  line bundle. Set $n=\dim X$, $r=\operatorname{rk}E$, $q=\operatorname{rk}Q$ and $p=\min\{n,r-q\}$ and
  assume that $E$ is Nakano semipositive and
  \[
  i\Theta_L-p(1+\varepsilon)i\Theta_{\det Q}\geqslant 0
  \]
  for some $\varepsilon>0$. Here $g^\star$ is the metric adjoint, and all norms use the given bundle metrics and a fixed K{\"a}hler metric on $X$. Then for every $F\in H^0(X,Q\otimes L\otimes K_X)$ with
  \[
  J_{h,g,\varepsilon}(F):=\int_X \left\langle (g g^\star)^{-1}F,F\right\rangle_h
  (\det g g^\star)^{-p(1+\varepsilon)} dV<\infty,
  \]
  there exists $H\in H^0(X,E\otimes L\otimes K_X)$ such that $g H=F$ and
  \[
  \int_X |H|_h^2(\det g g^\star)^{-p(1+\varepsilon)} dV\leqslant \left(1+\frac{1}{\varepsilon}\right)J_{h,g,\varepsilon}(F).
  \]
\end{theorem}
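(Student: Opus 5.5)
The plan is the classical Skoda argument in the bundle setting: reduce the division problem to a twisted $\bar\partial$-equation for the kernel bundle $S=\ker g\subset E$, and get the $L^2$ bound from a Bochner--Kodaira--Nakano inequality with an extra twist. We work with the minimal lift. Let
\[
H_0=g^\star(gg^\star)^{-1}F,
\]
a smooth section of $E\otimes L\otimes K_X$ with $gH_0=F$ and $|H_0|^2=\langle (gg^\star)^{-1}F,F\rangle$. Any holomorphic solution then has the form $H=H_0-u$ with $u$ a section of $S\otimes L\otimes K_X$ and
\[
\bar\partial u=\bar\partial H_0=:v .
\]
Via the second fundamental form $\beta\in C^\infty_{0,1}(\mathrm{Hom}(Q,S))$, one identifies $v=\beta F'$, where $F'$ corresponds to $H_0$ under $Q\cong S^\perp$. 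Since $X$ is Stein, we may exhaust by weakly pseudoconvex relatively compact domains and, by regularization, assume the data are smooth and $g$ is surjective. A normal families limit then gives the global statement.

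Solving the equation uses the curvature of $S$, computed by Gauss--Codazzi: $i\Theta_S=i\Theta_E|_S-\beta\wedge\beta^*$ on $S$. Nakano semipositivity of $E$ alone is not enough, because of the negative term $-\beta\wedge\beta^*$. We therefore twist by the weight
\[
\psi=p(1+\varepsilon)\log\det(gg^\star)
\]
and use the hypothesis $i\Theta_L-p(1+\varepsilon)i\Theta_{\det Q}\geqslant0$. The key pointwise estimate is Skoda's Lemme Fondamental in this setting. Letting $B$ be the curvature operator of $S\otimes L$ with the twisted metric, so that $B\geqslant [i\Theta_L\otimes\mathrm{Id}-\beta\wedge\beta^*,\Lambda]$ plus the Hessian of $-\psi$ contribution, we aim to show
\[
\langle B^{-1}v,v\rangle\leqslant \frac{1}{\varepsilon}\,|F'|^2 = \frac1\varepsilon\langle (gg^\star)^{-1}F,F\rangle .
\]
This is done by a Cauchy--Schwarz argument on $\beta$. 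Its rank is controlled by $p=\min\{n,r-q\}$: the trace of $\beta\wedge\beta^*$ contracted against $v$ is bounded by $p$ times $i\Theta_{\det Q}$ applied suitably. The factor $(1+\varepsilon)$ leaves a positive $\varepsilon$-portion of $p\,i\Theta_{\det Q}$, and that portion dominates $|v|^2$. We will adapt the scalar computation in Skoda's original paper, since Theorem~\ref{thm:skoda-vector-bundle} is cited from \cite{skoda1978,demailly-bigbook}.

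The main obstacle is the loss from the curvature form $i\Theta_{\det Q}$, which is not itself $\geqslant$ the needed term. The standard fix is to solve the $\bar\partial$ equation not with weight $\psi$ but with a modified weight $\psi+\chi(\psi)$, or equivalently the twisted estimate
\[
\|(\eta+\lambda^{-1})^{-1/2}\bar\partial^* w\|^2+\|\eta^{1/2}\bar\partial w\|^2\geqslant\langle\!\langle [\eta\, i\Theta - i\partial\bar\partial\eta - \lambda^{-1} i\partial\eta\wedge\bar\partial\eta,\Lambda]w,w\rangle\!\rangle .
\]
Here one takes $\eta$ a function of $\log\det gg^\star$ so that $-i\partial\bar\partial\eta$ supplies the missing positivity. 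Balancing the constants in this choice yields precisely the factor $1/\varepsilon$ in the bound for $\|u\|^2$.

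Finally, $H_0\perp u$ pointwise because $H_0\in S^\perp$ and $u\in S$. Hence
\[
\int|H|^2e^{-\psi}=\int|H_0|^2e^{-\psi}+\int|u|^2e^{-\psi}\leqslant\left(1+\frac1\varepsilon\right)J_{h,g,\varepsilon}(F),
\]
which gives the claimed estimate once the exhaustion limit is taken.
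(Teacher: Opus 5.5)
\paragraph{Verdict.} Your skeleton is the standard Skoda--Demailly route: the minimal lift $H_0=g^\star(gg^\star)^{-1}F$, the kernel equation on $S\otimes L\otimes K_X$, Gauss--Codazzi, the twist by $\psi$, and orthogonality at the end. The paper does not reprove Theorem~\ref{thm:skoda-vector-bundle}; it only cites it. For $q=1$, however, its Proposition~\ref{prop:dbar}, the operator $B_{\psi,g}$, and its appeal to the Lemme Fondamental set up exactly this reduction.

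\paragraph{Where the gap is.} The gap is the one non-formal step, the pointwise bound $\langle B^{-1}v,v\rangle\leqslant\varepsilon^{-1}|F'|^2$.
\begin{enumerate}[(a)]
\item You only assert, with ``applied suitably'', that $\operatorname{tr}\beta\wedge\beta^*$ is controlled by $i\Theta_{\det Q}$. Your ``main obstacle'' paragraph then says the opposite: that $i\Theta_{\det Q}$ does not dominate the needed term, so an $(\eta,\lambda)$-twisted estimate is required. That diagnosis is wrong.
\item The proposed remedy would not give the theorem. With non-constant $\eta$, the twisted estimate controls $\int(\eta+\lambda)^{-1}|u|^2e^{-\psi}$, not $\int|u|^2e^{-\psi}$. ``Balancing the constants'' is not an argument that produces the sharp factor $1/\varepsilon$.
\item The inequality you display is also misstated: its left side should contain $\|(\eta+\lambda)^{1/2}\bar\partial^*w\|^2$.
\item The twist contributes $+i\partial\bar\partial\psi$, not the Hessian of $-\psi$. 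This term need not be semipositive by itself; only its combination with $i\Theta_L$ is controlled.
\end{enumerate}

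\paragraph{The missing chain.} Use the paper's notation: $\beta=\sum_j dz_j\otimes\beta_j$ with $\beta_j\colon S\to Q$, so that $v=-\beta^*F'$.
\begin{enumerate}[(i)]
\item The twist turns $Q$ into $Q_{\mathrm{quot}}$. The metric $\langle(gg^\star)^{-1}\cdot,\cdot\rangle$ is the one $Q$ inherits from $E$, and its determinant is $\det h_Q\cdot(\det gg^\star)^{-1}$. Hence $i\Theta(L,he^{-\psi})-p(1+\varepsilon)i\Theta(\det Q_{\mathrm{quot}})=i\Theta_L-p(1+\varepsilon)i\Theta_{\det Q}\geqslant0$.
\item Gauss--Codazzi for the quotient reads $\Theta(Q_{\mathrm{quot}})=\Theta(E)_{S^\perp}+\beta\wedge\beta^*$, where $\Theta(E)_{S^\perp}$ is the compression of $\Theta(E)$ to $S^\perp\simeq Q$. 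Together with Griffiths semipositivity of $E$, this gives $i\Theta(\det Q_{\mathrm{quot}})\geqslant i\sum_{j,k}\operatorname{tr}(\beta_j\beta_k^*)\,dz_j\wedge d\bar z_k$.
\item Take $u=\sum_ju_j\,dz\wedge d\bar z_j$ with $u_j\in S$. For each $e\in Q$, the map $\mathbb C^n\to S$, $\xi\mapsto\sum_k\xi_k\beta_k^*e$, has rank at most $\min\{n,r-q\}=p$. Cauchy--Schwarz then gives $|\sum_j\beta_ju_j|^2\leqslant p\sum_{j,k}\operatorname{tr}(\beta_j\beta_k^*)\langle u_j,u_k\rangle$. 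This is exactly where $p$ enters.
\item $\Theta(E)|_S$ is Nakano semipositive and $\langle[i\beta^*\wedge\beta,\Lambda]u,u\rangle=-|\sum_j\beta_ju_j|^2$. Combining these with (i)--(iii) gives $\langle Bu,u\rangle\geqslant\varepsilon|\sum_j\beta_ju_j|^2$.
\item Since $\langle v,u\rangle=-\langle F',\sum_j\beta_ju_j\rangle$, you get $|\langle v,u\rangle|^2\leqslant\varepsilon^{-1}|F'|^2\langle Bu,u\rangle$. This is the desired bound. For $q=1$ and flat $E$ it is precisely $\langle B_{\psi,g}^{-1}\beta^*\wedge dz,\beta^*\wedge dz\rangle\leqslant 1/(\varepsilon|g|^2)$, the Lemme Fondamental the paper invokes.
\end{enumerate}
The plain H\"ormander--Demailly estimate on the weakly pseudoconvex K\"ahler manifold $X$, valid for merely semipositive $B$, then yields $u$ with $\int|u|^2e^{-\psi}\leqslant\varepsilon^{-1}J_{h,g,\varepsilon}(F)$. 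Your orthogonality step finishes the proof. No residual loss remains, and no twisting is needed.
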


We now consider the converse to Theorem \ref{thm:skoda-vector-bundle}. We treat the trivial
bundles with standard hermitian metrics. Let
$
 \underline{\mathbb C}^{m},
 \underline{\mathbb C}^{q}, m>q,
$
be trivial vector bundles over $D$.
Let $G:\underline{\mathbb{C}}^m\to \underline{\mathbb{C}}^q$ be holomorphic surjective bundle
morphism. Put
\begin{equation}\label{equ:matrix-basic-data}
 p=\min\{n,m-q\}.
\end{equation}
For $\varepsilon>0$, define
\begin{equation*}
 \psi_{G,\varepsilon}=p(1+\varepsilon)\log\det(GG^\star)
\end{equation*}
and for $F\in\mathcal O(D)^{\oplus q}$, set
\begin{equation*}
 J_{\varphi,G,\varepsilon}(F)
 =\int_D\left\langle (GG^\star)^{-1}F,F\right\rangle
 e^{-\varphi-\psi_{G,\varepsilon}} dV.
\end{equation*}
When $q=1$, these are precisely \eqref{equ:skoda-weight} and
\eqref{equ:skoda-datum}.

\begin{definition}
\label{def:rank-q-property}
We say that $F\in\mathcal O(D)^{\oplus q}$ is \emph{Skoda
divisible} with respect to $\varphi$ if for every $m>q$,
every pointwise surjective
$G\in\mathcal O(D,\operatorname{Hom}(\underline{\mathbb C}^m,\underline{\mathbb C}^q))$, and every
$\varepsilon>0$ with $J_{\varphi,G,\varepsilon}(F)<\infty$, there is
$H\in\mathcal O(D)^{\oplus m}$ such that $GH=F$ and
\begin{equation*}
 \int_D|H|^2e^{-\varphi-\psi_{G,\varepsilon}} dV
 \leqslant\left(1+\frac1\varepsilon\right)
 J_{\varphi,G,\varepsilon}(F).
\end{equation*}
\end{definition}

We write
\[
 A^2(D,\underline{\mathbb C}^q,\varphi)
 =\left\{F\in\mathcal O(D)^{\oplus q}:
 \int_D|F|^2e^{-\varphi} dV<\infty\right\}.
\]
The following result is the matrix-valued counterpart of Theorem
\ref{thm:main-inverse-general}.

\begin{theorem}
\label{thm:flat-vector-converse}
Let $D\subset\mathbb C^n$ be a bounded domain,
$\varphi\in C^2(D)$, and fix $q\geqslant1$.  Suppose that there is a
non-zero $F\in A^2(D,\underline{\mathbb C}^q,\varphi)$ that is Skoda
divisible with respect to $\varphi$, then
$\varphi$ is plurisubharmonic on $D$.

If in addition, $D$ is pseudoconvex and
$A^2(D,\underline{\mathbb C}^q,\varphi)$ is non-trivial, then the following conditions
are equivalent:
\begin{enumeratenumeric}
 \item $\varphi$ is plurisubharmonic on $D$;
 \item every $F\in\mathcal O(D)^{\oplus q}$ is Skoda
 divisible;
 \item some non-zero $F\in A^2(D,\underline{\mathbb C}^q,\varphi)$ is
 Skoda divisible.
\end{enumeratenumeric}
\end{theorem}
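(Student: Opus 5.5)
Two things must be shown: the first assertion, that a non-zero Skoda divisible $F\in A^2(D,\underline{\mathbb C}^q,\varphi)$ forces $\varphi$ to be plurisubharmonic, and the equivalence of 1–3 on pseudoconvex domains. The equivalence is formal once the first assertion is known. For $1\Rightarrow2$, take $X=D$ with trivial bundles $E=\underline{\mathbb C}^m$, $Q=\underline{\mathbb C}^q$ with flat standard metrics (so $E$ is Nakano semipositive and $\Theta_{\det Q}$ is the curvature of $\det GG^\star$ through $G$). Take $L$ trivial with metric $e^{-\varphi-\psi_{G,\varepsilon}}$. Identify $K_D$ with the trivial bundle via $dz_1\wedge\cdots\wedge dz_n$. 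Then Theorem \ref{thm:skoda-vector-bundle} applies, since in this normalization the curvature condition reduces to $i\partial\bar\partial\varphi\geqslant0$. $2\Rightarrow3$ is trivial because $A^2$ is nontrivial, and $3\Rightarrow1$ is the first assertion. So everything reduces to the first assertion.

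For the first assertion I would reduce to the rank-one statement Theorem \ref{thm:main-inverse-general}, or imitate its proof directly. The reduction goes as follows. Write $F=(F_1,\dots,F_q)$ with, after a permutation, $F_1\not\equiv0$. Given any $g=(g_1,\dots,g_k)$ without common zeros and $\varepsilon>0$, build the block morphism $G=\operatorname{diag}(g,1,\dots,1)\colon\underline{\mathbb C}^{k}\oplus\underline{\mathbb C}^{q-1}\to\underline{\mathbb C}^q$, so $m=k+q-1$. Then $m-q=k-1$, so $p=\min\{n,k-1\}$ agrees with the rank-one $p$. Moreover $\det GG^\star=|g|^2$, and
\[
\langle (GG^\star)^{-1}F,F\rangle=\frac{|F_1|^2}{|g|^2}+\sum_{j\geqslant2}|F_j|^2 .
\]
A solution $H=(h,H_2,\dots,H_q)$ of $GH=F$ has $H_j=F_j$ for $j\geqslant2$ and $g\cdot h=F_1$. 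Hence the Skoda estimate for $G$ becomes
\[
\int_D|h|^2e^{-\varphi-\psi_{g,\varepsilon}}\leqslant\Bigl(1+\frac1\varepsilon\Bigr)J_{\varphi,g,\varepsilon}(F_1)+\frac1\varepsilon\sum_{j\geqslant2}\int_D|F_j|^2e^{-\varphi-\psi_{g,\varepsilon}} .
\]
This differs from rank-one Skoda divisibility of $F_1$ only by an additive error term. That term is finite whenever we use the test tuples $g$ from the proof of Theorem \ref{thm:main-inverse-general}: those are presumably bounded with $|g|$ bounded below locally, and $F\in A^2$ with $D$ bounded.

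The main obstacle is therefore to check that the rank-one argument tolerates this extra term $\varepsilon^{-1}\sum_{j\geqslant2}\int|F_j|^2e^{-\varphi-\psi_{g,\varepsilon}}$. I expect the rank-one proof to go by contradiction. Assume $i\partial\bar\partial\varphi(z_0)(\xi,\bar\xi)<0$ at some point and direction. Choose $g$ concentrating near $z_0$, e.g. $g=(1,t\,\ell)$ with $\ell$ a linear form, or Gaussian-type factors $e^{-N\langle z-z_0,\xi\rangle^2}$. Then compare the minimal $L^2$ norm of $h$ with $(1+1/\varepsilon)J$, and obtain a second-order quantity in $t$ of the wrong sign from the negative Levi form. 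The quotient term enters the comparison to the same order in the scaling parameter $t$ as $J$ itself. To make it negligible, I would first try to localize: replace $F_j$, $j\geqslant2$, by modifying $G$ in the last $q-1$ slots with large constant entries $\lambda$, i.e. $G=\operatorname{diag}(g,\lambda,\dots,\lambda)$. Then the $j\geqslant2$ components of $\langle(GG^\star)^{-1}F,F\rangle$ scale by $\lambda^{-2}$ and their solution norm also scales by $\lambda^{-2}$. However, $\det GG^\star$ then gains a factor $\lambda^{2(q-1)}$, a constant that factors out of both sides, so the error becomes $\varepsilon^{-1}\lambda^{-2}\sum_{j\geqslant2}\int|F_j|^2e^{-\varphi-\psi}$, which tends to $0$ as $\lambda\to\infty$. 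Letting $\lambda\to\infty$ recovers exactly rank-one Skoda divisibility of $F_1$ for all admissible $g$. Since $|F_1|\leqslant|F|$ gives $F_1\in A^2(D,\varphi)$, Theorem \ref{thm:main-inverse-general} then yields plurisubharmonicity of $\varphi$.
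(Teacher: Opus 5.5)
Your proposal is correct and is essentially the paper's proof: your block matrix $\operatorname{diag}(g,\lambda\,\mathrm{Id}_{q-1})$ with $\lambda\to\infty$ is exactly the paper's $G_R$, the constant $\det$ factor cancels and the error term is $O(\lambda^{-2})$ in the same way, and the equivalence 1--3 is handled identically. Two points to tighten. First, letting $\lambda\to\infty$ requires extracting a limit of the solutions $h_\lambda$, which are bounded in the weighted Bergman space for fixed $g,\varepsilon$; the paper does this extraction on $u_R=H_R-G_R^\dagger F$. Second, you only recover scalar divisibility of $F_1$ for rows with $\int|F_j|^2e^{-\varphi-\psi_{g,\varepsilon}}<\infty$, not ``for all admissible $g$''. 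So you must appeal to the \emph{proof} of Theorem~\ref{thm:main-inverse-general}, which only uses $g=(A(z-z_0),M)$ with $|g|\geqslant M$, rather than to its statement, exactly as the paper does.
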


The paper is organized as follows.  Section \ref{sec:preliminaries} recalls some
hermitian vector bundle notation, the K{\"a}hler identities, the
Bochner--Kodaira--Nakano identity, and the Gauss--Codazzi formula. Section \ref{sec:inverse-division}
is devoted to proving Theorem \ref{thm:main-inverse-general}.  Section \ref{sec:higher-rank} proves Theorem \ref{thm:flat-vector-converse}.
Finally, Section \ref{sec:optimal-extension} gives a direct proof  of
the optimal $L^2$ extension.

\section{Preliminaries}\label{sec:preliminaries}

\subsection{Foundations for hermitian holomorphic vector bundles}

We fix the differential-geometric conventions used in the division
argument.  The material in this section is standard, the reader may refer to
\cite{demailly-bigbook,kobayashi-diff-geo-complex-vs}.  It is recalled in
some detail because the signs of the second fundamental form and of the
curvature commutator enter directly into the definition of
$B_{\psi,g}$.

Let $(X,\omega)$ be a K{\"a}hler manifold of complex dimension $n$ and
$(E,h)\to X$ a hermitian holomorphic vector bundle of rank $r$.  The
metric on $E$ and the metric induced by $\omega$ on differential forms give
a pointwise hermitian product, denoted by $\langle\cdot,\cdot\rangle$. If $u$ and $v$ are $E$-valued forms,
write
\[
 \llangle u,v\rrangle_h=\int_X\langle u,v\rangle_h dV_\omega,
 \qquad
 \|u\|_h^2=\llangle u,u\rrangle_h,
\]
we will also write $\llangle u,v\rrangle$ and $\|u\|^2$ when the metric
is clear.
This convention will also be used for sections of a holomorphic subbundle
equipped with its induced metric.

The Chern connection of $(E,h)$ is written
\[
 D_E=D'_E+\bar\partial_E,
\]
where $D'_E$ has bidegree $(1,0)$ and the $(0,1)$ part is the holomorphic
structure.  Its curvature
$\Theta(E)=D_E^2$ is an $\operatorname{End}E$-valued (1,1)-form.  At a point
$x\in X$, choose holomorphic coordinates that are orthonormal for $\omega$
at $x$ and an orthonormal frame $(e_1,\ldots,e_r)$ of $E_x$.  We write
\[
 i\Theta(E)\big|_x
 =i\sum_{j,k=1}^n\sum_{\lambda,\mu=1}^r
 c_{j\bar k\lambda\bar\mu}
 dz_j\wedge d\bar z_k\otimes e_\lambda^*\otimes e_\mu.
\]
The associated hermitian form on $T^{1,0}_xX\otimes E_x$ is
\[
 (\xi_\lambda^j)\longmapsto
 \sum_{j,k,\lambda,\mu}
 c_{j\bar k\lambda\bar\mu}
 \xi_\lambda^j\overline{\xi_\mu^k}.
\]
The bundle $(E,h)$ is Nakano semipositive if this form is non-negative at
every point.  We use the same terminology for a curvature operator on a
specified bundle of forms.

Let $L=L_\omega$ be multiplication by $\omega$ and let $\Lambda=L^*$,
the formal adjoint of $L$.
The formal adjoints of $D'_E$ and $\bar\partial_E$ are denoted by
$D'_E{}^*$ and $\bar\partial_E^*$.  The K{\"a}hler identities are
\begin{equation}\label{equ:kahler-identities}
 [\bar\partial_E^*,L]=iD'_E,
 \quad [D'_E{}^*,L]=-i\bar\partial_E,
 \quad [\Lambda,\bar\partial_E]=-iD'_E{}^*,
 \quad [\Lambda,D'_E]=i\bar\partial_E^*.
\end{equation}

Define the two Laplace operators by
\[
 \Delta'_E=D'_ED'_E{}^*+D'_E{}^*D'_E,
 \qquad
 \Delta''_E=\bar\partial_E\bar\partial_E^*
             +\bar\partial_E^*\bar\partial_E.
\]
The Bochner--Kodaira--Nakano identity states that
\begin{equation}\label{equ:bkn-identity}
 \Delta''_E=\Delta'_E+[i\Theta(E),\Lambda].
\end{equation}
For a compactly supported smooth $E$-valued form $u$, the Bochner--Kodaira--Nakano identity
\eqref{equ:bkn-identity} gives
\begin{equation*}
 \|\bar\partial_Eu\|^2+\|\bar\partial_E^*u\|^2
 =\|D'_Eu\|^2+\|D'_E{}^*u\|^2
  +\llangle[i\Theta(E),\Lambda]u,u\rrangle.
\end{equation*}
For an $E$-valued $(n,1)$-form
\[
 u=\sum_{j=1}^n\sum_{\lambda=1}^r
 u_{j\lambda} dz\wedge d\bar z_j\otimes e_\lambda,
 \qquad dz=dz_1\wedge\cdots\wedge dz_n,
\]
the curvature term at the chosen point is
\begin{equation}\label{equ:curvature-operator-general}
 \big\langle[i\Theta(E),\Lambda]u,u\big\rangle
 =\sum_{j,k,\lambda,\mu}
 c_{j\bar k\lambda\bar\mu}
 u_{j\lambda}\overline{u_{k\mu}}.
\end{equation}
Consequently, Nakano positivity of $(E,h)$ is equivalent to positivity of
$[i\Theta(E),\Lambda]$ on $E$-valued $(n,1)$-forms.  Formula
\eqref{equ:curvature-operator-general} is also the convention behind every
curvature contraction appearing below.

If the metric is changed to $he^{-\Phi}$ for a real-valued $C^2$ function
$\Phi$, then
\begin{equation}\label{equ:weighted-curvature}
 i\Theta(E,he^{-\Phi})
 =i\Theta(E,h)+i\partial\bar\partial\Phi\otimes\mathrm{Id}_E.
\end{equation}
Thus a scalar plurisubharmonic weight contributes a non-negative term to
the curvature operator.

\subsection{The Gauss--Codazzi formula}

Consider an exact sequence of holomorphic vector bundles
\begin{equation*}
 0\longrightarrow S\xrightarrow{j}E\xrightarrow{g}Q\longrightarrow0.
\end{equation*}
Assume that $E$ is hermitian and give $S$ and $Q$ the induced metrics.  Let
$j^*:E\to S$ be orthogonal projection and let $g^*:Q\to E$ be the
orthogonal right inverse.  Then $j^*j=\mathrm{Id}_S$,
$gg^*=\mathrm{Id}_Q$, and the map $j^*\oplus g$ identifies $E$ smoothly
with $S\oplus Q$.

Let $D_E$, $D_S$, and $D_Q$ denote the Chern connections.  Relative to the
$C^\infty$ orthogonal splitting $E\simeq S\oplus Q$, the connection has the
following form,
\begin{equation}\label{equ:gauss-codazzi-connection}
 D_E=
 \begin{bmatrix}
  D_S&-\beta^*\\
  \beta&D_Q
 \end{bmatrix},
\end{equation}
where the
\emph{second fundamental form} of $S$ in $E$
$\beta$ is a $(1,0)$ form valued in $\operatorname{Hom}(S,Q)$. Denote $\beta^*$ by its
adjoint.  The types of the off-diagonal entries imply
\begin{equation}\label{equ:gauss-codazzi-first-order}
 \bar\partial g^*=-\beta^*,
 \qquad
 \bar\partial_S=\bar\partial_E|_S,
 \qquad
 D'_S=D'_E|_S-\beta.
\end{equation}
The second equality and the K{\"a}hler identities
\eqref{equ:kahler-identities} also show that
\begin{equation}\label{equ:chern-adjoint}
 D'_S{}^*=D'_E{}^*,
\end{equation}
although
$D'_S$ and $D'_E$ differ by $\beta$. For simplicity, we will use $\partial^*$ to denote both of them.

Squaring \eqref{equ:gauss-codazzi-connection} gives the curvature matrix
\begin{equation*}
 \Theta(E)=
 \begin{bmatrix}
  \Theta(S)-\beta^*\wedge\beta&-D'\beta^*\\
  \bar\partial\beta&\Theta(Q)-\beta\wedge\beta^*
 \end{bmatrix}.
\end{equation*}
In particular, for a smooth section $s$ of $S$,
\begin{equation}\label{equ:gauss-codazzi-curvature}
 \Theta(S)s=\Theta(E)s+\beta^*\wedge\beta s.
\end{equation}
If $\beta=\sum_jdz_j\otimes\beta_j$, then
\[
 \beta^*\wedge\beta
 =-\sum_{j,k}dz_j\wedge d\bar z_k\otimes\beta_k^*\beta_j.
\]
It follows from \eqref{equ:curvature-operator-general} that the commutator
$[i\beta^*\wedge\beta,\Lambda]$ is non-positive on $S$-valued
$(n,1)$-forms.  More explicitly, if
$u=\sum_ju_j dz\wedge d\bar z_j$, then
\begin{equation*}
 \big\langle[i\beta^*\wedge\beta,\Lambda]u,u\big\rangle
 =-\left|\sum_j\beta_ju_j\right|_Q^2\leqslant0.
\end{equation*}
Hence the subbundle curvature is the ambient curvature together with a
Nakano-negative second-fundamental-form contribution.  When
$E=\underline{\mathbb C}^{m}$ carries the metric $e^{-\Phi}$, formulas
\eqref{equ:weighted-curvature} and
\eqref{equ:gauss-codazzi-curvature} give
\begin{equation*}
 [i\Theta(S,e^{-\Phi}),\Lambda]
 =\left[i\partial\bar\partial\Phi\otimes
 \mathrm{Id}_{\underline{\mathbb C}^{m}}
 +i\beta^*\wedge\beta,\Lambda\right].
\end{equation*}
This is the operator used in the $L^2$ estimate for the kernel equation.
Notice that its second-fundamental-form term is non-positive.

We now specialize these formulas to the division problem.  Let
$D\subset\mathbb C^n$ be a domain and let the holomorphic tuple
$g=(g_1,\ldots,g_m)\in\mathcal O(D)^{\oplus m}$ have no common zero.  The
associated exact sequence is
\begin{equation*}
 0\longrightarrow S\longrightarrow\underline{\mathbb C}^{m}
 \xrightarrow{\ g\ }\underline{\mathbb C}\longrightarrow0,
 \qquad S=\ker g.
\end{equation*}
The standard metrics on the two trivial bundles induce a metric on $S$,
and the orthogonal right inverse is
\begin{equation}\label{equ:orthogonal-right-inverse}
 g^*(a)=\frac{a}{|g|^2}
 (\overline{g_1},\ldots,\overline{g_m}),
 \qquad a\in\mathbb C.
\end{equation}
Thus $gg^*=\mathrm{Id}$ and $g^*(\underline{\mathbb C})=S^\perp$.  For a
holomorphic function $f$, the section $g^*f$ is the pointwise minimal-norm
lifting of $f$ and satisfies $|g^*f|^2=|f|^2/|g|^2$.  Applying
\eqref{equ:gauss-codazzi-first-order} gives
\begin{equation}\label{equ:right-inverse-defect}
 \bar\partial(g^*f)=-\beta^*f.
\end{equation}
Every smooth lifting of $f$ is uniquely of the form $g^*f+u$ with $u$
taking values in $S$, and it is holomorphic precisely when
$\bar\partial_Su=\beta^*f$.  Pointwise orthogonality gives
\[
 |g^*f+u|^2=\frac{|f|^2}{|g|^2}+|u|^2.
\]

For comparison with the strong division estimate from
\cite{li-meng-ning-zhou2025arxiv}, let
$dz=dz_1\wedge\cdots\wedge dz_n$ and, for an admissible auxiliary weight
$\psi$, set
\[
 B_{\psi,g}
 =\left[i\partial\bar\partial\psi\otimes
 \mathrm{Id}_{\underline{\mathbb C}^{m}}
 +i\beta^*\wedge\beta,\Lambda\right].
\]
The right-hand side of that stronger estimate is
\begin{equation*}
 I_{\varphi,\psi,g}(f)
 =\int_D\left\{\frac1{|g|^2}
 +\left\langle B_{\psi,g}^{-1}\beta^*\wedge dz,
 \beta^*\wedge dz\right\rangle\right\}
 |f|^2e^{-\varphi-\psi} dV.
\end{equation*}

\subsection{The classical Skoda estimate}
\label{sec:classical-division}

Let $D\subset\mathbb C^n$ be a domain and assume the components of holomorphic tuple
$g=(g_1,\ldots,g_m)\in\mathcal O(D)^{\oplus m}$ have no common zero.  We
retain the notation $S=\ker g$, $g^*$, and $\beta$ from the preceding
subsection.  We now spell out how \eqref{equ:right-inverse-defect} turns the
classical division estimate into an estimate for the kernel bundle.

\begin{proposition}
  \label{prop:dbar}
  Let $\varphi\in L^1_{\mathrm{loc}}(D)$,
  $f\in\mathcal O(D)$, $\varepsilon>0$, and $p=\min\{n,m-1\}$.  Assume
  $J_{\varphi,g,\varepsilon}(f)<\infty$.  The following two assertions are
  equivalent:
  \begin{enumerate}[a)]
    \item There exists $h \in \mathcal O (D)^{\oplus m}$ such that $f = g
    \cdot h$ and
    \[ \int_D | h |^2 e^{-\varphi-\psi_{g,\varepsilon}} dV
       \leqslant \left( 1 + \frac{1}{\varepsilon} \right)
       J_{\varphi,g,\varepsilon}(f) . \]
    \item There exists an $S$-valued section $u$ satisfying
    \begin{equation}\label{equ:dbar}
    \bar\partial_S u = \beta^* f
    \end{equation}
    with
    \begin{equation}\label{equ:dbar-esitmates}
      \int_D | u |^2 e^{-\varphi-\psi_{g,\varepsilon}} dV
       \leqslant \frac{1}{\varepsilon} J_{\varphi,g,\varepsilon}(f).
    \end{equation}

  \end{enumerate}
\end{proposition}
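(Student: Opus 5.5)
The equivalence rests on the correspondence $h\leftrightarrow u$ given by $h=g^*f+u$ with $u$ taking values in $S$, together with the pointwise orthogonality $|g^*f+u|^2=|f|^2/|g|^2+|u|^2$ recalled above. Integrating this identity against $e^{-\varphi-\psi_{g,\varepsilon}}$ gives
\[
 \int_D|h|^2e^{-\varphi-\psi_{g,\varepsilon}}dV
 =J_{\varphi,g,\varepsilon}(f)+\int_D|u|^2e^{-\varphi-\psi_{g,\varepsilon}}dV .
\]
This holds as an identity in $[0,\infty]$. Since $J_{\varphi,g,\varepsilon}(f)<\infty$ by hypothesis, the bound $(1+1/\varepsilon)J$ on the left is equivalent to the bound $J/\varepsilon$ on the $u$-integral. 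So the estimates \eqref{equ:dbar-esitmates} and the one in a) translate into each other exactly, and the only remaining task is to match the holomorphy and the lifting conditions.

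For a) $\Rightarrow$ b), given holomorphic $h$ with $g\cdot h=f$, set $u=h-g^*f$. Then $g u=f-gg^*f=0$, so $u$ is a (smooth, since $g^*f$ is smooth) section of $S$. Using \eqref{equ:right-inverse-defect} we get $\bar\partial u=\bar\partial h-\bar\partial(g^*f)=0+\beta^*f$. By \eqref{equ:gauss-codazzi-first-order}, $\bar\partial_S=\bar\partial_E|_S$, so this is precisely $\bar\partial_Su=\beta^*f$. The estimate follows from the displayed identity above.

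For b) $\Rightarrow$ a), given $u$ solving \eqref{equ:dbar}, set $h=g^*f+u$. Then $g\cdot h=f$ because $gu=0$. Moreover $\bar\partial h=-\beta^*f+\beta^*f=0$ in the sense of distributions, so $h$ is holomorphic by Weyl's lemma (ellipticity of $\bar\partial$ on functions). The estimate again follows from the identity.

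The only delicate point is regularity in b). There $u$ is a priori only an $L^2_{\mathrm{loc}}$ section, indeed $L^2$ with respect to a weight that is locally bounded below, since $\varphi\in L^1_{\mathrm{loc}}$ only gives $e^{-\varphi}>0$ a.e. So $\bar\partial_S u=\beta^*f$ should be read distributionally for the $\underline{\mathbb C}^m$-valued function $u$, with $\bar\partial_S$ the restriction of the flat $\bar\partial$. I would justify that $u\in L^1_{\mathrm{loc}}$ as follows: $e^{-\varphi}$ is locally integrable away from a null set in a way that makes $|u|^2e^{-\varphi}$ control $u$, via Cauchy--Schwarz with $e^{\varphi}$. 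This needs care, so the cleaner route is to interpret b) as requiring $u\in L^1_{\mathrm{loc}}$ (automatic when $\varphi$ is locally bounded above, e.g. $C^2$ as in the applications). With that, the distributional computation is legitimate, and holomorphy of $h$ follows.
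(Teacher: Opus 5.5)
Your proposal is correct and follows essentially the same route as the paper: the orthogonal decomposition $h=g^*f+u$ with $u$ taking values in $S$, the identity $\bar\partial(g^*f)=-\beta^*f$ to match holomorphy of $h$ with the equation $\bar\partial_S u=\beta^*f$, and integration of $|h|^2=|u|^2+|f|^2/|g|^2$ against the weight $e^{-\varphi-\psi_{g,\varepsilon}}$. Your regularity remark concerns a point the paper passes over by simply reading the equation distributionally. The Cauchy--Schwarz justification you sketch would need $e^{\varphi}\in L^1_{\mathrm{loc}}$, which $\varphi\in L^1_{\mathrm{loc}}$ does not give, so your final choice of taking $u\in L^1_{\mathrm{loc}}$ as part of the meaning of b) is the right fix.
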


\begin{proof}
  The proof of this proposition is essentially contained in \cite{li-meng-ning-zhou2025arxiv},
  for the sake of completeness, we include the proof here. Put
  \[
    g^*f=\frac{f}{|g|^2}(\overline{g_1},\ldots,\overline{g_m}).
  \]
  We have $g\cdot g^*f=f$, while $g^*f$ is pointwise orthogonal to $S$.
  Suppose first that $h$ is a holomorphic divisor of $f$ and define
  $u=h-g^*f$.  Since $g\cdot u=0$, the section $u$ is $S$-valued.  Using
  \eqref{equ:right-inverse-defect} and $\bar\partial h=0$, we obtain
  \[
   \bar\partial_Su=\beta^*f.
  \]
  Conversely, let $u$ be $S$-valued and solve \eqref{equ:dbar} with the $L^2$ norm
  estimates.  Then
  $h=g^*f+u$ satisfies $g\cdot h=f$, and
  \eqref{equ:right-inverse-defect} shows that $\bar\partial h=0$ in the
  distribution sense.  Hence $h$ is holomorphic.

  It remains to compare the norms.  Pointwise orthogonality gives the exact
  identity
  \[
    |h|^2=|u|^2+|g^*f|^2=|u|^2+\frac{|f|^2}{|g|^2}.
  \]
  After multiplication by
  $e^{-\varphi-\psi_{g,\varepsilon}}$ and integration, the fixed last term
  is precisely $J_{\varphi,g,\varepsilon}(f)$. Combining this with \eqref{equ:dbar-esitmates}
  gives the desired estimate for $h$.
\end{proof}

\begin{remark}\label{rmk:two-division-functionals}
Proposition \ref{prop:dbar} concerns the special weight
$\psi_{g,\varepsilon}$ and therefore uses
$J_{\varphi,g,\varepsilon}$.  In the strong estimate from our previous
work, a general auxiliary weight $\psi$ is allowed and the correction term
is measured by $ B_{\psi,g}^{-1}$.
\end{remark}

\section{The converse to the classical Skoda estimate}
\label{sec:inverse-division}

We now prove Theorem \ref{thm:main-inverse-general}.  Unlike the  converse of the strong Skoda division theorem 
from \cite{li-meng-ning-zhou2025arxiv}, the hypothesis here supplies no
inverse-curvature norm and no independent plurisubharmonic weight.  Every
localizing and curvature-testing effect must therefore come from the
original Skoda factor $|g|^{-2p(1+\varepsilon)}$ itself.\\

The argument has three stages.  Complex affine changes of coordinates
preserve the division hypothesis.  A special choice of holomorphic tuple in
$\mathcal O(D)^{\oplus(n+1)}$ then creates a
family of normalized measures concentrating at one point.  Finally, the
Bochner--Kodaira--Nakano identity converts the division estimate into a
lower bound for the trace of the Levi form.  Affine invariance promotes this
trace inequality to semi-positivity in every complex direction.

\subsection{Affine invariance}

We begin with the coordinate invariance of the classical estimate.  The
Jacobian factor occurs on both sides and therefore cancels.  It is important
that the number of generators is unchanged, hence the same argument applies
when the hypothesis is restricted to holomorphic tuples in
$\mathcal O(D)^{\oplus(n+1)}$.

\begin{lemma}
  \label{lem:affine}
  Let $T(z)=Az+b$, where $A\in\mathrm{GL}(n,\mathbb C)$. Set
  $\widetilde D=T(D)$,
  $\widetilde\varphi=\varphi\circ T^{-1}$ and
  $\widetilde f=f\circ T^{-1}$. If $f$ is Skoda
  divisible with respect to $\varphi$ on $D$, then $\widetilde f$ is also Skoda
  divisible with respect to $\tilde{\varphi}$ on $\widetilde D$.
\end{lemma}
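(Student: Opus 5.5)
The plan is a direct change of variables: pull the data on $\widetilde D$ back to $D$ by $T$, apply the Skoda divisibility of $f$ there, and push the solution forward again. Fix $m\geqslant1$, a holomorphic tuple $\widetilde g\in\mathcal O(\widetilde D)^{\oplus m}$ without common zeros, and $\varepsilon>0$ with $J_{\widetilde\varphi,\widetilde g,\varepsilon}(\widetilde f)<\infty$ on $\widetilde D$. Define $g=\widetilde g\circ T$ on $D$. Since $T$ is a biholomorphism from $D$ onto $\widetilde D$, $g$ is a holomorphic tuple on $D$ with the same number $m$ of components, and these components still have no common zero. The quantity $p=\min\{n,m-1\}$ is therefore unchanged, and pointwise
\[
\psi_{g,\varepsilon}=\psi_{\widetilde g,\varepsilon}\circ T,\qquad \varphi=\widetilde\varphi\circ T,\qquad f=\widetilde f\circ T .
\]

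The key computation is the change of variables $w=T(z)$, for which $dV(w)=|\det A|^2\,dV(z)$ with a constant Jacobian. This gives
\[
J_{\widetilde\varphi,\widetilde g,\varepsilon}(\widetilde f)=|\det A|^2\,J_{\varphi,g,\varepsilon}(f),
\]
so $J_{\varphi,g,\varepsilon}(f)<\infty$. By the Skoda divisibility of $f$ on $D$, there is $h\in\mathcal O(D)^{\oplus m}$ with $g\cdot h=f$ and
\[
\int_D|h|^2e^{-\varphi-\psi_{g,\varepsilon}}\,dV\leqslant\left(1+\frac1\varepsilon\right)J_{\varphi,g,\varepsilon}(f).
\]
Now set $\widetilde h=h\circ T^{-1}$. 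It is holomorphic and satisfies $\widetilde g\cdot\widetilde h=(g\cdot h)\circ T^{-1}=\widetilde f$. Applying the same change of variables to the left-hand side multiplies it by the same factor $|\det A|^2$. That factor then cancels on both sides of the inequality, which yields exactly the required estimate on $\widetilde D$.

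No step is a genuine obstacle. The only point needing care is that the Jacobian is a constant, so it factors out of both integrals identically; this is exactly where affineness of $T$ is used. A general biholomorphism would not work, because its Jacobian $|\det T'|^2$ would not cancel. One should also note that $m$ is preserved by the construction. Because of this, the argument applies verbatim when the divisibility hypothesis is restricted to tuples in $\mathcal O(D)^{\oplus(n+1)}$.
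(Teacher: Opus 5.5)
Your proposal is correct and follows the same route as the paper: pull back $\widetilde g$ to $g=\widetilde g\circ T$, use the constant Jacobian $|\det A|^2$ to get $J_{\widetilde\varphi,\widetilde g,\varepsilon}(\widetilde f)=|\det A|^2J_{\varphi,g,\varepsilon}(f)$, apply divisibility on $D$, and push $h$ forward as $\widetilde h=h\circ T^{-1}$. Your observation that $m$ (hence $p$) is unchanged, so the argument also works when the hypothesis is restricted to $(n+1)$-tuples, is the same remark the paper makes just before the lemma.
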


\begin{proof}
  Fix a holomorphic test tuple
  $\widetilde g\in\mathcal O(\widetilde D)^{\oplus m}$ without a common
  zero and set
  $g=\widetilde g\circ T\in\mathcal O(D)^{\oplus m}$.  Since
  $dV(Tz)=|\det A|^2dV(z)$ and the functions themselves are transported by
  composition, the change-of-variables formula gives
  \begin{equation*}
   J_{\widetilde\varphi,\widetilde g,\varepsilon}(\widetilde f)
   =|\det A|^2J_{\varphi,g,\varepsilon}(f).
  \end{equation*}
  If the right-hand side is finite, the assumed property of $f$ gives
  $h\in\mathcal O(D)^{\oplus m}$ such that $f=g\cdot h$ and
  \[ \int_D | h |^2 e^{- \varphi - p (1 + \varepsilon) \log | g |^2}  dV
     \leqslant \left( 1 + \frac{1}{\varepsilon} \right)
     J_{\varphi,g,\varepsilon}(f), \]
  Define the holomorphic tuple
  $\widetilde h=h\circ T^{-1}\in\mathcal O(\widetilde D)^{\oplus m}$.  Then
  $\widetilde f=\widetilde g\cdot\widetilde h$ and
  \begin{eqnarray*}
    \int_{\widetilde D} | \widetilde h |^2 e^{- \widetilde\varphi - p (1 +
    \varepsilon) \log | \widetilde g |^2}  dV & = & | \det A |^2 \int_D | h |^2
    e^{- \varphi - p (1 + \varepsilon) \log | g |^2}  dV\\
    & \leqslant & \left(1+\frac{1}{\varepsilon}\right)
    J_{\widetilde\varphi,\widetilde g,\varepsilon}(\widetilde f) .
  \end{eqnarray*}
  Thus every admissible test on $\widetilde D$ pulls back to  $D$, proving the assertion.
\end{proof}

\subsection{A special choice of holomorphic test tuple}

Assume from now on that $0\in D$.  For a parameter $M>0$, consider the
special choice of holomorphic test tuple
\begin{equation*}
 g=(z_1,\ldots,z_n,M)\in\mathcal O(D)^{\oplus(n+1)},
 \qquad |g|^2=M^2+|z|^2.
\end{equation*}
Its last component prevents common zeros, while its first $n$ components
retain all coordinate directions.  Let $S=\ker g$ and let $\beta$ be its
second fundamental form.  The next computation records the quantities
needed later in the Bochner--Kodaira--Nakano identity.

\begin{lemma}
  \label{lem:2nd-form}
  Write
  \[
    i\partial\bar\partial\log|g|^2
    =i\sum_{k,l=1}^n H_{k\bar l} dz_k\wedge d\bar z_l,
    \qquad H=(H_{k\bar l}).
  \]
  Then
  \begin{enumeratenumeric}
    \item $|\beta^*|^2=\dfrac{\operatorname{tr}H}{|g|^2}$;

    \item $\left\langle \left[ i \partial\bar\partial \log | g |^2 \otimes
    \mathrm{Id}_{\underline{\mathbb C}^{n + 1}}, \Lambda \right] \beta^*
    \wedge dz, \beta^* \wedge dz \right\rangle = \dfrac{\operatorname{tr} H^2}{| g
    |^2}$;

    \item
    \[ n \left\langle \left[ i \partial\bar\partial \log | g |^2 \otimes
       \mathrm{Id}_{\underline{\mathbb C}^{n + 1}}, \Lambda \right]
       \beta^* \wedge dz, \beta^* \wedge dz \right\rangle - | g |^2 |
       \beta^* |^4 = \frac{(n - 1) | z |^4}{| g |^{10}} . \]
  \end{enumeratenumeric}
\end{lemma}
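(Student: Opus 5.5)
The plan is a direct pointwise computation. Write $\rho=|g|^2=M^2+|z|^2$. I will first compute the Levi form $H$ explicitly and diagonalize it. Then I will express the Gram matrix of the components of $\beta^*$ in terms of $H$. All three identities then reduce to linear algebra on $H$.

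\emph{Step 1: the Levi form.} Direct differentiation of $\log\rho$ gives
\[
 H_{k\bar l}=\frac{\delta_{kl}}{\rho}-\frac{\bar z_k z_l}{\rho^2}.
\]
So $\rho H = I-\frac{\bar z z^{T}}{\rho}$ (up to the harmless transpose/conjugation convention). Its eigenvalues are $1/\rho$ with multiplicity $n-1$, on the orthogonal complement of $\bar z$, and $\frac1\rho-\frac{|z|^2}{\rho^2}=\frac{M^2}{\rho^2}$ in the direction $\bar z$. In particular
\[
 \operatorname{tr}H=\frac{(n-1)|z|^2+nM^2}{\rho^2}.
\]

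\emph{Step 2: the second fundamental form.} By \eqref{equ:orthogonal-right-inverse} and \eqref{equ:right-inverse-defect}, $\beta^*=-\bar\partial(\bar g/\rho)$, where $\bar g=(\bar z_1,\ldots,\bar z_n,M)$. Write $\beta^*=\sum_j d\bar z_j\otimes\beta_j^*$. Then
\[
 \beta_j^*=-\frac{e_j}{\rho}+\frac{z_j\,\bar g}{\rho^2}
 =-\frac1\rho\Big(e_j-\frac{\langle e_j,\bar g\rangle}{\rho}\bar g\Big),
\]
that is, $-\rho^{-1}$ times the orthogonal projection of $e_j$ onto $S=\bar g^{\perp}$. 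Hence the Gram matrix is
\[
 \langle\beta_j^*,\beta_k^*\rangle=\frac1{\rho^2}\Big(\delta_{jk}-\frac{\bar z_j z_k}{\rho}\Big)=\frac{H_{j\bar k}}{\rho}
\]
(again up to index-order convention). Taking the trace gives item 1: $|\beta^*|^2=\sum_j|\beta_j^*|^2=\operatorname{tr}H/\rho$.

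\emph{Step 3: the curvature contraction.} The form $\beta^*\wedge dz$ is, up to sign, the $(n,1)$-form $u=\sum_j u_j\,dz\wedge d\bar z_j$ with $u_j=\pm\beta_j^*$. The sign is irrelevant since the expression is quadratic. By \eqref{equ:curvature-operator-general} applied to the curvature $i\partial\bar\partial\log\rho\otimes\mathrm{Id}$, we have $c_{j\bar k\lambda\bar\mu}=H_{j\bar k}\delta_{\lambda\mu}$. So the pairing equals
\[
 \sum_{j,k}H_{j\bar k}\langle u_j,u_k\rangle=\frac1\rho\sum_{j,k}H_{j\bar k}H_{k\bar j}=\frac{\operatorname{tr}H^2}{\rho}.
\]
This is item 2. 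The main care needed is the index and conjugation bookkeeping, so that the Gram matrix enters as $H^{T}$ and the sum is $\operatorname{tr}H^2$ rather than $\operatorname{tr}(H\bar H)$. Because $H$ is hermitian, both readings give the same trace anyway, and I will verify that to be safe.

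\emph{Step 4: item 3.} Combining items 1 and 2, the left side of item 3 equals
\[
 \frac{n\operatorname{tr}H^2-(\operatorname{tr}H)^2}{\rho}.
\]
For eigenvalues $\lambda_1,\dots,\lambda_n$ we have $n\sum\lambda_i^2-(\sum\lambda_i)^2=\sum_{i<j}(\lambda_i-\lambda_j)^2$. By Step 1, the only nonzero differences are the $n-1$ pairs involving the special eigenvalue, each equal to
\[
 \frac1\rho-\frac{M^2}{\rho^2}=\frac{|z|^2}{\rho^2}.
\]
Hence the numerator is $(n-1)|z|^4/\rho^4$, and the whole expression is $(n-1)|z|^4/\rho^5=(n-1)|z|^4/|g|^{10}$, as claimed.
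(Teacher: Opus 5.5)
Your proposal is correct and follows the paper's proof essentially step for step: you use the explicit components $\beta_j^*$, the Gram matrix equal to the transpose of $H$ divided by $|g|^2$, the contraction via \eqref{equ:curvature-operator-general} to $\operatorname{tr}H^2/|g|^2$, and the eigenvalues $1/|g|^2$ (multiplicity $n-1$) and $M^2/|g|^4$ of $H$. One correction to your aside: the two index readings do \emph{not} give the same trace, because for hermitian $H$ one has $\operatorname{tr}H^2-\operatorname{tr}(H\bar H)=\sum_{j,k}\bigl(|H_{j\bar k}|^2-H_{j\bar k}^2\bigr)$, which here equals $\bigl(|z|^4-\bigl|\sum_j z_j^2\bigr|^2\bigr)/|g|^8\neq0$ in general; the correct relation is $\langle\beta_j^*,\beta_k^*\rangle=|g|^{-4}\bigl(\delta_{jk}-z_j\bar z_k/|g|^2\bigr)=H_{k\bar j}/|g|^2$, your Step 3 uses exactly this, and so your final computation stands.
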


\begin{proof}
  1. Write $\beta^*=(\beta_1^*,\ldots,\beta_{n+1}^*)$.  The right
    inverse in \eqref{equ:orthogonal-right-inverse} is
    $(\bar z_1,\ldots,\bar z_n,M)/|g|^2$.  Since
    $\beta^*=-\bar\partial g^*$, differentiation gives, for
    $1\leqslant j\leqslant n$,
    \[ \beta^*_j = - \bar\partial \left( \frac{\bar z_j}{| g |^2}
       \right) = - \sum_{k = 1}^n \left( \frac{\delta_{j k}}{| g |^2} -
       \frac{\bar z_j z_k}{| g |^4} \right) d\bar z_k, \]
    while the last component is
    \[ \beta^*_{n + 1} = - \bar\partial \left( \frac{M}{| g |^2}
       \right) = M \sum_{k = 1}^n \frac{z_k}{| g |^4} d\bar z_k . \]
    Decompose the resulting $(0,1)$-form as
    \[ \beta^* = (\beta^*_1, \ldots, \beta^*_{n + 1}) = v_1 d\bar z_1 + \cdots + v_n d\bar z_n, \]
    it follows that
    \begin{equation}
      v_k = - \frac{e_k}{| g |^2} + \frac{x z_k}{| g |^4}
      \label{equ:2nd-coefficients}
    \end{equation}
    where $e_k$ is the $k$-th standard basis vector in
    $\mathbb C^{n+1}$ and
    $x=(\bar z_1,\ldots,\bar z_n,M)$.  Using $|x|^2=|g|^2$, we obtain
    \begin{eqnarray*}
      \langle v_k, v_l \rangle & = & \frac{\delta_{k l}}{| g |^4} + \frac{| x
      |^2 z_k \bar z_l}{| g |^8} - \frac{z_k \bar z_l}{| g |^6} - \frac{z_k
      \bar z_l}{| g |^6}\\
      & = & \frac{\delta_{k l}}{| g |^4} - \frac{z_k \bar z_l}{| g |^6} .
    \end{eqnarray*}
    A separate differentiation of $\log(M^2+|z|^2)$ yields
    \begin{equation}
      (i \partial\bar\partial \log | g |^2)_{k \bar{l}} = \frac{\delta_{k
      l}}{| g |^2} - \frac{z_l \bar z_k}{| g |^4}, \label{equ:curvature}
    \end{equation}
    Comparing the last two matrices, with the harmless transpose forced by
    the form convention, gives
    \[ (\langle v_k, v_l \rangle)_{l k} = \frac{i \partial\bar\partial \log
       | g |^2}{| g |^2} \]
    Taking the trace proves the first identity.
    \[ | \beta^* |^2 = \sum_{k = 1}^n | v_k |^2 = \dfrac{\operatorname{tr} i
       \partial\bar\partial \log | g |^2}{| g |^2} . \]

  2. Formula \eqref{equ:curvature-operator-general}, applied to
    $\beta^*\wedge dz=\sum_kv_kd\bar z_k\wedge dz$, gives
    \begin{eqnarray*}
      &  & \left\langle \left[ i \partial\bar\partial \log | g |^2 \otimes
      \mathrm{Id}_{\underline{\mathbb C}^{n + 1}}, \Lambda \right] \beta^* \wedge dz, \beta^* \wedge dz \right\rangle\\
      & = & \sum_{k, l = 1}^n \left( \frac{\delta_{k l}}{| g |^2} - \frac{z_l
      \bar z_k}{| g |^4} \right)
      \left\langle - \frac{e_k}{| g |^2} + \frac{x z_k}{| g |^4},
      - \frac{e_l}{| g |^2} + \frac{x z_l}{| g |^4}\right\rangle\\
      & = & \sum_{k = 1}^n \sum_{l = 1}^n \left(
      \frac{\delta_{k l}}{| g |^2} - \frac{z_l \bar z_k}{| g |^4} \right)
      \langle v_k, v_l \rangle\\
      & = & \frac{1}{| g |^2} \operatorname{tr} H^2 .
    \end{eqnarray*}

  3. The matrix in \eqref{equ:curvature} acts by $1/|g|^2$ on the
    orthogonal complement of the radial vector and by $M^2/|g|^4$ in the
    radial direction.  Hence its eigenvalues are $1/|g|^2$ with
    multiplicity $n-1$ and $M^2/|g|^4$ with multiplicity one.  Substitution
    into the combination in the statement gives
    \begin{eqnarray*}
      &  & n \left\langle \left[ i \partial\bar\partial \log | g |^2
      \otimes \mathrm{Id}_{\underline{\mathbb C}^{n + 1}}, \Lambda \right]
      \beta^* \wedge dz, \beta^* \wedge dz \right\rangle - | g |^2 |
      \beta^* |^4\\
      & = & n \frac{\operatorname{tr} H^2}{| g |^2} - \dfrac{(\operatorname{tr} H)^2}{| g
      |^2}\\
      & = & \frac{1}{| g |^2} (n \operatorname{tr} H^2 - (\operatorname{tr} H)^2)\\
      & = & \frac{(n - 1)}{| g |^2} \left( \frac{1}{| g |^2} - \frac{M^2}{| g
      |^4} \right)^2\\
      & = & \frac{(n - 1) | z |^4}{| g |^{10}} .
    \end{eqnarray*}
\end{proof}
As a direct consequence of Lemma \ref{lem:2nd-form}, we obtain the following corollary, which will be used in the proof of Theorem \ref{thm:main-inverse-general}.
\begin{corollary}
  \label{cor:2nd-fundamental-compute}
  At $0$, we have
  \begin{enumeratenumeric}
    \item $\beta^* |_0 = - \dfrac{1}{M^2} (d\bar z_1, \ldots, d\bar z_n, 0)$;

    \item $| \beta^* |^2_0 = \dfrac{n}{M^4}$;

    \item
    \[ \left\langle \left[ i \partial\bar\partial \log | g |^2 \otimes
       \mathrm{Id}_{\underline{\mathbb C}^{n + 1}}, \Lambda \right] \beta^* \wedge
       dz, \beta^* \wedge dz \right\rangle |_0 = \frac{n}{M^6} ; \]
    \item $\partial^* (\beta^* \wedge dz) |_0 = 0$.
  \end{enumeratenumeric}
\end{corollary}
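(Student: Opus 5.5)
Items 1--3 are specializations at $z=0$ of the formulas in Lemma~\ref{lem:2nd-form}. Item 4 is a separate first-order computation, and it is the only part that needs new work.

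\emph{Items 1--3.} At $z=0$ we have $|g|^2=M^2$. The coefficient formula \eqref{equ:2nd-coefficients},
\[
v_k=-\frac{e_k}{|g|^2}+\frac{x z_k}{|g|^4},
\]
then reduces to $v_k=-e_k/M^2$, because the second term carries the factor $z_k$. Since $\beta^*=\sum_k v_k\,d\bar z_k$, this gives $\beta^*|_0=-\frac1{M^2}(d\bar z_1,\ldots,d\bar z_n,0)$, which is item 1.

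By \eqref{equ:curvature}, at the origin $H=\frac1{M^2}\mathrm{Id}_n$. Hence $\operatorname{tr}H=n/M^2$ and $\operatorname{tr}H^2=n/M^4$. Substituting into parts 1 and 2 of Lemma~\ref{lem:2nd-form} and dividing by $|g|^2=M^2$ gives $|\beta^*|_0^2=n/M^4$ and the value $n/M^6$ for the curvature pairing. These are items 2 and 3.

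\emph{Item 4.} By \eqref{equ:chern-adjoint}, $\partial^*$ on $S$-valued forms coincides with $D'_E{}^*$ for the ambient bundle $E=\underline{\mathbb C}^{n+1}$ with its flat standard metric. The Kähler metric is also the flat Euclidean one. Therefore, for
\[
u=\sum_k v_k\,dz\wedge d\bar z_k,
\]
$\partial^*u$ is the $(n-1,1)$-form whose coefficients are, up to signs and the contraction $\iota_{\partial/\partial z_j}dz$, the derivatives $\partial v_k/\partial\bar z_j$. This is because the formal adjoint of $\partial/\partial z_j$ is $-\partial/\partial\bar z_j$, and no zeroth-order terms arise in the flat setting. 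It therefore suffices to check that every $\partial v_k/\partial\bar z_j$ vanishes at $0$.

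For the first term of $v_k$,
\[
\partial_{\bar z_j}\bigl(|g|^{-2}\bigr)=-z_j|g|^{-4},
\]
which vanishes at $0$. For the second term, $x z_k|g|^{-4}$, the product rule produces
\[
z_k\bigl(\partial_{\bar z_j}x\bigr)|g|^{-4}-2\,x z_k z_j|g|^{-6},
\]
and both pieces carry the factor $z_k$, so they also vanish at $0$. Hence $\partial^*(\beta^*\wedge dz)|_0=0$.

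The only delicate point is justifying that $\partial^*$ has no zeroth-order terms. Two facts guarantee this: by \eqref{equ:chern-adjoint}, $\partial^*$ may be computed in the flat trivial bundle rather than in $S$ with its induced (non-flat) metric, and the Kähler metric is Euclidean. Once that is granted, the vanishing reduces to the observation that every $\bar z$-derivative of $v_k$ carries a holomorphic factor $z_k$ or $z_j$.
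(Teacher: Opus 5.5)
Your proposal is correct and follows the paper's proof essentially line by line. Items 1--3 are read off from \eqref{equ:2nd-coefficients} and Lemma~\ref{lem:2nd-form} using $H(0)=M^{-2}\mathrm{Id}$ and $|g|^2=M^2$, and item 4 reduces, exactly as in the paper, to the fact that every $\bar z$-derivative of the coefficients $v_k$ carries a factor $z_k$ or $z_j$. Your added justification that $\partial^*$ has no zeroth-order terms is sound and makes explicit a step the paper leaves unstated; it remains valid for the weighted metrics used later, since $\partial^*=i[\Lambda,\bar\partial]$ does not depend on the bundle metric.
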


\begin{proof}
  1. Evaluating \eqref{equ:2nd-coefficients} at the origin eliminates
    its second term and gives
    \[ \beta^* |_0 = \sum_{k = 1}^n v_k d\bar z_k |_0 = -
       \dfrac{1}{M^2} (d\bar z_1, \ldots, d\bar z_n, 0) . \]

  2. At $0$, the matrix $H$ is $M^{-2}\mathrm{Id}$ and
    $|g|^2=M^2$.  The first assertion of Lemma \ref{lem:2nd-form}
    therefore yields
    \[ | \beta^* |^2_0 = \dfrac{n}{M^4} . \]

  3. The second assertion of Lemma \ref{lem:2nd-form}, together with
    $H(0)=M^{-2}\mathrm{Id}$, gives
    \begin{eqnarray*}
      &  & \left\langle \left[ i \partial\bar\partial \log | g |^2 \otimes
      \mathrm{Id}_{\underline{\mathbb C}^{n + 1}}, \Lambda \right] \beta^* \wedge dz, \beta^* \wedge dz \right\rangle |_0\\
      & = & \dfrac{\operatorname{tr} H^2}{| g |^2} |_0\\
      & = & \frac{n}{M^6} .
    \end{eqnarray*}

  4. The coefficients of $\partial^*(\beta^*\wedge dz)$ are linear
    combinations of the first derivatives of the coefficients of
    $\beta^*$.  For $1\leqslant j\leqslant n$, these derivatives are
    \[ \frac{\partial}{\partial \bar z_s} \left( \frac{\delta_{j k}}{| g |^2}
       - \frac{\bar z_j z_k}{| g |^4} \right) = - \frac{\delta_{j k} z_s +
       \delta_{j s} z_k}{| g |^4} + \frac{2 \bar z_j z_k z_s}{| g |^6} \]
    and
    \[ \frac{\partial}{\partial \bar z_s} \left( M \frac{z_k}{| g |^4}
       \right) = - \frac{  2 M z_k z_s}{| g |^6} . \]
    Each expression contains a factor $z_k$ or $z_s$ and therefore vanishes
    at the origin.  Consequently,
    \[ \partial^* (\beta^* \wedge dz) |_0 = 0. \]
\end{proof}

The following proposition is the key estimate that will be used in the proof of Theorem \ref{thm:main-inverse-general}.

\begin{proposition}
  \label{prop:key-estimate}
  Let $D$ be bounded and assume $0\in D$. Given the holomorphic tuple
  $g=(z_1,\ldots,z_n,M)\in\mathcal O(D)^{\oplus(n+1)}$ with $M>0$.
  Suppose that
  $F\colon D\to[0,\infty)$ is continuous and integrable and
  $F(0)>0$.  Choose $\chi\in C_c^\infty(D)$ with
  $0\leqslant\chi\leqslant1$ and $\chi=1$ on a neighborhood of $0$.  For
  $\varepsilon>0$, define
  \[ d \mu_{\varepsilon} = F e^{- n (1 + \varepsilon) \log | g |^2}  dV. \]
  Then
  \[
   \nu_\varepsilon:=\frac{d\mu_{\varepsilon}}{\int_Dd\mu_{\varepsilon}}
  \]
  converge weakly to $\delta_0$ as $\varepsilon\to\infty$.  Moreover,
  \[ \frac{1}{\int_D d \mu_{\varepsilon}} \left( \frac{\varepsilon \left( \int_D \chi |
     \beta^* |^2 d \mu_{\varepsilon} \right)^2}{\int_D e^{- \log | g |^2} d \mu_{\varepsilon}} - n (1
     + \varepsilon) \int_D \chi^2 \left\langle \left[ i \partial
     \bar\partial \log | g |^2 \otimes \mathrm{Id}_{\underline{\mathbb C}^{n
     + 1}}, \Lambda \right] \beta^* \wedge dz, \beta^* \wedge dz
     \right\rangle d \mu_{\varepsilon} \right) \]
  converges to $-n^2/M^6$ as $\varepsilon\to\infty$.
\end{proposition}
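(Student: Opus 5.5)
The plan is a Laplace-type asymptotic analysis. The density $F|g|^{-2n(1+\varepsilon)}=F\,(M^2+|z|^2)^{-n(1+\varepsilon)}$ is maximized exactly at $z=0$. Moreover,
\[
\frac{(M^2+|z|^2)^{-n(1+\varepsilon)}}{M^{-2n(1+\varepsilon)}}=\exp\!\Big(-n(1+\varepsilon)\log\big(1+|z|^2/M^2\big)\Big).
\]
So I will substitute $z=M w/\sqrt{n(1+\varepsilon)}$ (write $t=n(1+\varepsilon)$). In these coordinates the normalized kernel becomes $(1+|w|^2/t)^{-t}\,dV(w)$ up to a constant. This converges pointwise to $e^{-|w|^2}$ and is dominated by an integrable function for $t\geqslant n+1$, using $(1+s/t)^{-t}\leqslant(1+s/(n+1))^{-(n+1)}$. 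On the complement of a fixed ball $B_r$, the ratio of the density to its value at $0$ is at most $(1+r^2/M^2)^{-t}$. This decays exponentially, while $\int_D d\mu_\varepsilon\gtrsim t^{-n}$ by continuity of $F$ at $0$ and $F(0)>0$. Since $D$ is bounded and $F$ is integrable, the mass outside $B_r$ is negligible relative to the total. Continuity of $F$ at $0$ then gives $\int\phi\,d\nu_\varepsilon\to\phi(0)$ for every bounded continuous $\phi$, which is the weak convergence $\nu_\varepsilon\to\delta_0$.

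Next I will apply this to each integral in the expression, with the normalizing factor $1/\int d\mu_\varepsilon$ distributed appropriately. Write $A=\int\chi|\beta^*|^2\,d\nu_\varepsilon$, $B=\int|g|^{-2}\,d\nu_\varepsilon$ and $C=\int\chi^2\langle[\cdots]\beta^*\wedge dz,\beta^*\wedge dz\rangle\,d\nu_\varepsilon$. The expression then equals $\varepsilon A^2/B-(1+\varepsilon)nC$. All the integrands are bounded continuous functions on $D$: $|g|^2\geqslant M^2$, $\chi$ is compactly supported, and Lemma~\ref{lem:2nd-form} gives explicit rational formulas. Hence Corollary~\ref{cor:2nd-fundamental-compute} yields the limits $A\to n/M^4$, $B\to 1/M^2$ and $C\to n/M^6$. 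Formally the leading terms are $\varepsilon n^2/M^6-(1+\varepsilon)n^2/M^6=-n^2/M^6$. However, this cancels the order-$\varepsilon$ parts, so zeroth-order convergence of $A,B,C$ is not enough.

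The main obstacle is to show that $\varepsilon\,(A^2/B-nC)\to0$, i.e.\ that $A^2/B-nC=o(1/\varepsilon)$. I will expand each integrand to second order at $0$. By Lemma~\ref{lem:2nd-form}, and since $\chi=1$ near $0$, we have $|\beta^*|^2=\operatorname{tr}H/|g|^2$ near $0$. Using $H$'s eigenvalues $1/|g|^2$ (multiplicity $n-1$) and $M^2/|g|^4$, this gives $|\beta^*|^2=(n-1)/|g|^4+M^2/|g|^6$. Similarly, $\operatorname{tr}H^2/|g|^2=(n-1)/|g|^6+M^4/|g|^{10}$. Everything is therefore a function of $s=|z|^2$, and $\int s\,d\nu_\varepsilon\sim M^2 n/t\sim M^2/(1+\varepsilon)$ from the rescaling, while $\int s^2\,d\nu_\varepsilon=O(\varepsilon^{-2})$. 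Each quantity has the form $X=X_0+X_1\int s\,d\nu_\varepsilon+O(\varepsilon^{-2})$, with the cutoff and the non-smooth part of $F$ contributing only $o(1/\varepsilon)$.

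Here is a pitfall: continuity of $F$ alone does not make $\int s\,F\ldots$ behave like $F(0)\int s\ldots$ to order $1/\varepsilon$ with a relative $o(1)$ error. What works is that the error is $o(1)\cdot O(1/\varepsilon)=o(1/\varepsilon)$, because $|F-F(0)|$ is small on a small ball and $s=O(1/\varepsilon)$ in the averaged sense. With this, the first-order coefficients can be computed. Expanding $(1+s/M^2)^{-k}\approx1-ks/M^2$, the first-order terms are
\begin{align*}
A&:\ \tfrac{1}{M^4}\bigl(n-(2(n-1)+3)\bar s/M^2\bigr),\\
B&:\ \tfrac{1}{M^2}\bigl(1-\bar s/M^2\bigr),\\
C&:\ \tfrac{1}{M^6}\bigl(n-(3(n-1)+5)\bar s/M^2\bigr),
\end{align*}
where $\bar s=\int s\,d\nu_\varepsilon$ and the exact first-order $F$-weighted average cancels against itself. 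Then $A^2/B-nC$ has first-order coefficient, in units of $\bar s/M^8$, equal to $-2n(2n+1)+n^2+n(3n+2)=0$. This cancellation is exactly the content of Lemma~\ref{lem:2nd-form}(3), whose right side is $O(|z|^4)$. Hence $\varepsilon(A^2/B-nC)=O(\varepsilon\cdot\varepsilon^{-2})+o(1)\to0$, and the limit is $-nC\to-n^2/M^6$.
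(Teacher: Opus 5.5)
Your proof is correct. For the weak convergence and the moment bounds it is essentially the paper's argument: you rescale $z=Mw/\sqrt{n(1+\varepsilon)}$ and use dominated convergence, where the paper uses Beta-function identities. The decisive step $\varepsilon(A^2/B-nC)\to0$, however, goes by a genuinely different route.

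\textbf{Your route.} You expand $A$, $B$, $C$ to first order in the exact $F$-weighted mean $\bar s=\int|z|^2\,d\nu_\varepsilon$, using the explicit radial formulas. You then check by hand that the linear coefficient $-2n(2n+1)+n^2+n(3n+2)$ vanishes. After that, only the bounds $\bar s=O(1/\varepsilon)$ and $\int|z|^4\,d\nu_\varepsilon=O(\varepsilon^{-2})$ are needed. Consequently your ``pitfall'' discussion about comparing $F$ with $F(0)$ is superfluous.

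\textbf{The paper's route.} The paper avoids this first-order bookkeeping. It writes $nC-A^2/B$ as the sum of two pieces:
\begin{enumerate}[(i)]
\item $nC-\int\chi^2|g|^2|\beta^*|^4\,d\nu_\varepsilon=\int\chi^2(n-1)|z|^4|g|^{-10}\,d\nu_\varepsilon$, which is Lemma~\ref{lem:2nd-form}(3);
\item the variance-type term $\int\chi^2|g|^2|\beta^*|^4\,d\nu_\varepsilon-A^2/B$. Minimizing the quadratic $S(t)$ (a Cauchy--Schwarz argument) bounds it by $\int|\chi\operatorname{tr}H-\operatorname{tr}H(0)|^2|g|^{-2}\,d\nu_\varepsilon=O\bigl(\int|z|^4\,d\nu_\varepsilon\bigr)$.
\end{enumerate}
This shows both error terms are nonnegative and of size $O(\int|z|^4\,d\nu_\varepsilon)$, so it even gives $A^2/B\leqslant nC$ for every $\varepsilon$. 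It uses only Lemma~\ref{lem:2nd-form}(3) and the estimate $|\operatorname{tr}H-\operatorname{tr}H(0)|\leqslant c|z|^2$, without computing any linear coefficients.

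\textbf{Comparison.} Your route is more computational but makes the cancellation completely explicit. The paper's decomposition also shows that attributing the cancellation to Lemma~\ref{lem:2nd-form}(3) alone is only half the story. That lemma handles $nC$ versus $\int(\operatorname{tr}H)^2|g|^{-2}\,d\nu_\varepsilon$. The passage from the latter to $A^2/B$ is first-order exact for a separate reason: a variance is of second order.
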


\begin{proof}
  1. We first prove the weak convergence.  Choose $\rho>0$ so that
    $B(0,2\rho)\Subset D$ and $\chi=1$ there. For all sufficiently large
    $\varepsilon$, one has    \[
      B\left(0,\frac{M}{\sqrt{n(1+\varepsilon)}}\right)\subset B(0,\rho).
    \]
    On that ball, $|z|^2/M^2\leqslant1/[n(1+\varepsilon)]$.  It follows from
     $(1+1/q)^q\leqslant e$ that
    \[ \frac{1}{| g |^{2 n (1 + \varepsilon)}} \geqslant \frac{1}{M^{2 n (1 +
       \varepsilon)}} \left( 1 + \frac{1}{n (1 + \varepsilon)} \right)^{- n (1
       + \varepsilon)} \geqslant \frac{1}{e M^{2 n (1 + \varepsilon)}} . \]
    By shrinking $B(0,\rho)$ if necessary, there exist
    constants $c_0,C_0>0$ such that
    $c_0\leqslant F\leqslant C_0$ on $B(0,\rho)$.  Therefore,
    \begin{eqnarray}
      \int_D d \mu_{\varepsilon} & \geqslant & \int_{B \left( 0, \frac{M}{\sqrt{n (1 +
      \varepsilon)}} \right)} F e^{- n (1 + \varepsilon) \log | g |^2}  dV
      \nonumber\\
      & \geqslant & c_0 \int_{B \left( 0, \frac{M}{\sqrt{n (1 +
      \varepsilon)}} \right)} \frac{1}{| g |^{2 n (1 + \varepsilon)}}  dV
      \label{equ:estimate}\\
      & \geqslant & \frac{c_0}{e M^{2 n (1 + \varepsilon)}} \sigma_{2 n}
      \frac{M^{2 n}}{n^n (1 + \varepsilon)^n} \nonumber\\
      & := & c_M \frac{1}{M^{2 n (1 + \varepsilon)} n^n (1 +
      \varepsilon)^n} \nonumber
    \end{eqnarray}
    where $\sigma_{2n}$ denotes the Euclidean volume of the unit ball in
    $\mathbb R^{2n}$ and $c_M>0$ is independent of $\varepsilon$.

    On the other hand,
    \begin{equation*}
      \int_{D\backslash B (0, \rho)} d \mu_{\varepsilon} \leqslant \frac{1}{(M^2 +
      \rho^2)^{n (1 + \varepsilon)}} \int_D F  dV.
    \end{equation*}
    It then follows from \eqref{equ:estimate} that
    \begin{eqnarray}
      &  & \frac{1}{\int_D d \mu_{\varepsilon}} \int_{D\backslash B (0, \rho)} d \mu_{\varepsilon}
      \nonumber\\
      & \leqslant & \frac{n^n (1 + \varepsilon)^n}{c_M} \left( \frac{M^2}{M^2
      + \rho^2} \right)^{n (1 + \varepsilon)} \int_D F  dV
      \label{equ:exponent}\\
      & \rightarrow & 0, \text{ as } \varepsilon\to \infty.\nonumber
    \end{eqnarray}

    Let $\zeta$ be a test function.  Given $\delta>0$,
    choose $r<\rho$ with
    $|\zeta(z)-\zeta(0)|<\delta$ on $B(0,r)$ and therefore,
    \begin{eqnarray*}
      &  & \left| \frac{1}{\int_D d \mu_{\varepsilon}} \int_D \zeta d \mu_{\varepsilon} - \zeta (0)
      \right|\\
      & = & \frac{1}{\int_D d \mu_{\varepsilon}} \left| \int_D \zeta d \mu_{\varepsilon} - \int_D \zeta
      (0) d \mu_{\varepsilon} \right|\\
      & \leqslant & \frac{1}{\int_D d \mu_{\varepsilon}} \int_D | \zeta - \zeta (0) | d
      \mu_{\varepsilon}\\
      & = & \frac{1}{\int_D d \mu_{\varepsilon}} \left( \int_{B (0, r)} + \int_{D\backslash
      B (0, r)} \right) | \zeta - \zeta (0) | d \mu_{\varepsilon}\\
      & < & \delta + \frac{1}{\int_D d \mu_{\varepsilon}} \int_{D\backslash B (0, r)} |
      \zeta - \zeta (0) | d \mu_{\varepsilon},
    \end{eqnarray*}
    Since $|\zeta-\zeta(0)|$ is bounded, repeating
    \eqref{equ:exponent} with $r$ in place of $\rho$ therefore shows that
    \[ \frac{1}{\int_D d \mu_{\varepsilon}} \int_{D\backslash B (0, r)} | \zeta - \zeta (0)
       | d \mu_{\varepsilon} \rightarrow 0, \]
    which implies
    \[ \frac{1}{\int_D d \mu_{\varepsilon}} d \mu_{\varepsilon} \rightarrow \delta_0 \]
    weakly as $\varepsilon \rightarrow \infty$.

  2. By the weak convergence just proved and Corollary
    \ref{cor:2nd-fundamental-compute}, it follows that
    \begin{equation}
      - n \frac{\int_D \chi^2\left\langle \left[ i \partial\bar\partial \log |
      g |^2 \otimes \mathrm{Id}_{\underline{\mathbb C}^{n + 1}}, \Lambda
      \right] \beta^* \wedge dz, \beta^* \wedge dz \right\rangle d \mu_{\varepsilon}}{\int_D d
      \mu_{\varepsilon}} \rightarrow - \frac{n^2}{M^6}. \label{equ:limit2}
    \end{equation}
    Therefore, it suffices to show that
    \begin{equation}
      \frac{- \varepsilon}{\int_D d \mu_{\varepsilon}} \left( n \int_D \chi^2\left\langle \left[ i
      \partial\bar\partial \log | g |^2 \otimes
      \mathrm{Id}_{\underline{\mathbb C}^{n + 1}}, \Lambda \right]
      \beta^* \wedge dz, \beta^* \wedge dz \right\rangle d \mu_{\varepsilon} - \frac{\left(
      \int_D \chi | \beta^* |^2 d \mu_{\varepsilon} \right)^2}{\int_D e^{- \log | g
      |^2} d \mu_{\varepsilon}} \right) \label{equ:limit1}
    \end{equation}
    tends to zero.  Once this is established, adding
    \eqref{equ:limit1} and \eqref{equ:limit2} gives the claimed limit.

    For $t\in\mathbb R$, set
    \[ S (t) = \int_D | \chi | g |^2 | \beta^* |^2 - t |^2 \frac{d \mu_{\varepsilon}}{|
       g |^2}, \]
    this is well-defined because all coefficients involving
    $g$ and $\beta$ are bounded on the bounded domain.  Since
    \[ S (t) = \int_D \chi^2 | g |^4 | \beta^* |^4 \frac{d \mu_{\varepsilon}}{| g |^2}
       - 2 t \int_D \chi | g |^2 | \beta^* |^2 \frac{d \mu_{\varepsilon}}{| g |^2} +
       t^2 \int_D \frac{d \mu_{\varepsilon}}{{| g |^2} } \]
    attains its minimum at
    \[
     t_0=\frac{\displaystyle\int_D\chi|g|^2|\beta^*|^2
     \frac{d\mu_{\varepsilon}}{|g|^2}}
     {\displaystyle\int_De^{-\log|g|^2} d\mu_{\varepsilon}},
    \]
    we have in particular that $S(t_0)\leqslant S((|g|^2|\beta^*|^2)(0))$, that is
    \begin{eqnarray*}
      &  & \int_D | \chi | g |^2 | \beta^* |^2 - (| g |^2 | \beta^*
      |^2) (0)  |^2 \frac{d \mu_{\varepsilon}}{| g |^2}\\
      & = & S ((| g |^2 | \beta^* |^2) (0))\\
      & \geqslant & \int_D \chi^2 | g |^4 | \beta^* |^4 \frac{d \mu_{\varepsilon}}{| g
      |^2} - \frac{\left( \int_D \chi | g |^2 | \beta^* |^2 \frac{d
      \mu_{\varepsilon}}{| g |^2} \right)^2}{\int_D e^{- \log | g |^2} d \mu_{\varepsilon}}\\
      & = & \int_D \chi^2 | g |^2 | \beta^* |^4 d \mu_{\varepsilon} - \frac{\left(
      \int_D \chi | \beta^* |^2 d \mu_{\varepsilon} \right)^2}{\int_D e^{- \log | g
      |^2} d \mu_{\varepsilon}} .
    \end{eqnarray*}
    Since $\chi=1$ near $0$, by shrinking $B(0,\rho)$ if necessary, there exists a constant $c_1$ such that
    \[
     \big|\chi|g|^2|\beta^*|^2-(|g|^2|\beta^*|^2)(0)\big|
     \leqslant c_1|z|^2
    \]
    on $B(0,\rho)$.  Note that there exists $c_2 > 0$ such that $| \chi | g |^2
    | \beta^{\ast} |^2 - (| g |^2 | \beta^{\ast} |^2) (0)  |^2 \leqslant c_2$ on
    $D$, it then follows that
    \begin{eqnarray*}
      &  & \frac{1}{\int_D d \mu_{\varepsilon}} \int_D | \chi | g |^2 | \beta^* |^2 -
      (| g |^2 | \beta^* |^2) (0)  |^2 \frac{d \mu_{\varepsilon}}{| g |^2}\\
      & = & \frac{1}{\int_D d \mu_{\varepsilon}} \left( \int_{B (0, \rho)} +
      \int_{D\backslash B (0, \rho)} \right) | \chi | g |^2 | \beta^* |^2
      - (| g |^2 | \beta^* |^2) (0)  |^2 \frac{d \mu_{\varepsilon}}{| g |^2}\\
      & \leqslant & \frac{c_1}{M^2 \int_D d \mu_{\varepsilon}} \int_{B (0, \rho)} | z |^4 d
      \mu_{\varepsilon} + \frac{1}{\int_D d \mu_{\varepsilon}} \int_{D\backslash B (0, \rho)} | \chi | g
      |^2 | \beta^* |^2 - (| g |^2 | \beta^* |^2) (0)  |^2 \frac{d
      \mu_{\varepsilon}}{| g |^2}\\
      & = & A + B.
    \end{eqnarray*}
    We now estimate $A$ and $B$ separately.
    For $k=1,2$ and $n(1+\varepsilon)>n+k$, polar coordinates followed by
    the substitution $s=r^2/M^2$ give
    \begin{eqnarray*}
      &  & \int_{\mathbb C^n} | z |^{2 k} \frac{1}{| g |^{2 n (1 +
      \varepsilon)}}  dV\\
      & = & c_{2 n - 1} \int_0^{\infty} r^{2 k + 2 n - 1} \frac{1}{(M^2 +
      r^2)^{n (1 + \varepsilon)}} d r\\
      & = & \frac{c_{2 n - 1}}{2} M^{2 (n + k - n (1 + \varepsilon))} \Beta
      (n + k, n (1 + \varepsilon) - n - k)
    \end{eqnarray*}
    where $c_{2n-1}$ is the volume of the unit sphere in $\mathbb R^{2n}$ and
    $\Beta$ is the Beta function.  The identities
    \[ \Beta (n + k, n (1 + \varepsilon) - n - k) = \frac{\Gamma (n + k) \Gamma (n
       (1 + \varepsilon) - n - k)}{\Gamma (n (1 + \varepsilon))}, \]
    and
    \[ \lim_{\varepsilon \rightarrow \infty} \frac{\Gamma (n + k) \Gamma (n (1
       + \varepsilon) - n - k) (n (1 + \varepsilon))^{n + k}}{\Gamma (n (1 +
       \varepsilon))} = \Gamma (n + k), \]
    it then follows that there exists $\tilde{C}'$ such that
    \[
    \int_{\mathbb C^n} | z |^{2 k} \frac{1}{| g |^{2 n (1 +
      \varepsilon)}}  dV\leqslant \tilde{C}' \frac{M^{2 (n + k - n
      (1 + \varepsilon))}}{(n (1 + \varepsilon))^{n + k}}.
    \]
    Combining this with the
    normalization lower bound \eqref{equ:estimate}, we obtain a constant
    $\widetilde C'$ such that
    \begin{eqnarray*}
      \frac{1}{\int_D d \mu_{\varepsilon}} \int_{\mathbb C^n} | z |^{2 k} \frac{1}{| g |^{2
      n (1 + \varepsilon)}}  dV & \leqslant & \tilde{C}' \frac{M^{2 (n + k - n
      (1 + \varepsilon))}}{(n (1 + \varepsilon))^{n + k}} \cdot \frac{M^{2 n
      (1 + \varepsilon)} [n (1 + \varepsilon)]^n}{c_M} \\
      & := & \frac{C''}{(1 + \varepsilon)^k} .
    \end{eqnarray*}
    Hence,
    \begin{eqnarray}
      &  & \frac{1}{\int_D d \mu_{\varepsilon}} \int_D | z |^{2 k} d \mu_{\varepsilon}
      \nonumber\\
      & = & \frac{1}{\int_D d \mu_{\varepsilon}} \left( \int_{B (0, \rho)} +
      \int_{D\backslash B (0, \rho)} \right) | z |^{2 k} d \mu_{\varepsilon} \nonumber\\
      & \leqslant & \frac{C_0}{\int_D d \mu_{\varepsilon}} \int_{\mathbb C^n} \frac{| z
      |^{2 k}}{| g |^{2 n (1 + \varepsilon)}}  dV + \frac{1}{\int_D d \mu_{\varepsilon}}
      \int_{D\backslash B (0, \rho)} | z |^{2 k} d \mu_{\varepsilon}
      \label{equ:estimates2}\\
      & \leqslant & \frac{C''C_0}{(1 + \varepsilon)^k} + \sup_{z \in D} | z |^{2
      k} \cdot \frac{n^n (1 + \varepsilon)^n}{c_M} \left( \frac{M^2}{M^2 +
      \rho^2} \right)^{n (1 + \varepsilon)} \int_D F  dV. \nonumber
    \end{eqnarray}

    Taking $k=2$, there exists $C'$ such that
    \[ A \leqslant \frac{C'}{(1 + \varepsilon)^2} + \sup_{z \in D} | z |^4
       \cdot \frac{n^n (1 + \varepsilon)^n}{c_M} \left( \frac{M^2}{M^2 +
       \rho^2} \right)^{n (1 + \varepsilon)} \int_D F  dV. \]
    It follows directly by
    \eqref{equ:exponent} that
    \begin{eqnarray*}
      B & = & \frac{1}{\int_D d \mu_{\varepsilon}} \int_{D\backslash B (0, \rho)} | \chi | g
      |^2 | \beta^* |^2 - (| g |^2 | \beta^* |^2) (0)  |^2 \frac{d
      \mu_{\varepsilon}}{| g |^2}\\
      & \leqslant & \frac{c_2}{M^2} \frac{1}{\int_D d \mu_{\varepsilon}} \int_{D\backslash
      B (0, \rho)} d \mu_{\varepsilon}\\
      & \leqslant & \frac{c_2}{M^2} \frac{n^n (1 + \varepsilon)^n}{c_M}
      \left( \frac{M^2}{M^2 + \rho^2} \right)^{n (1 + \varepsilon)} \int_D F  dV.
    \end{eqnarray*}
    By the Cauchy--Schwarz inequality, the minimizing property of $S(t)$, and the last two
    estimates, we have
    \begin{eqnarray*}
      & 0 \leqslant & \frac{1}{\int_D d \mu_{\varepsilon}} \left( \int_D \chi^2 | g |^2 |
      \beta^* |^4 d \mu_{\varepsilon} - \frac{\left( \int_D \chi | \beta^* |^2 d
      \mu_{\varepsilon} \right)^2}{\int_D e^{- \log | g |^2} d \mu_{\varepsilon}} \right)\\
      & \leqslant & \frac{1}{\int_D d \mu_{\varepsilon}} \int_D | \chi | g |^2 |
      \beta^* |^2 - (| g |^2 | \beta^* |^2) (0)  |^2 \frac{d \mu_{\varepsilon}}{| g
      |^2}\\
      & \leqslant & \frac{C'}{(1 + \varepsilon)^2} + \sup_{z \in D} | z |^4
      \cdot \frac{n^n (1 + \varepsilon)^n}{c_M} \left( \frac{M^2}{M^2 +
      \rho^2} \right)^{n (1 + \varepsilon)} \int_D F  dV\\
      &  & + \frac{c_2}{M^2} \frac{n^n (1 + \varepsilon)^n}{c_M} \left(
      \frac{M^2}{M^2 + \rho^2} \right)^{n (1 + \varepsilon)} \int_D F  dV.
    \end{eqnarray*}
    On the other hand, it follows from
    Lemma \ref{lem:2nd-form} and
    \eqref{equ:estimates2} that
    \begin{eqnarray*}
      &  & \frac{1}{\int_D d \mu_{\varepsilon}} \left( n \int_D \chi^2 \left\langle \left[
      i \partial\bar\partial \log | g |^2 \otimes
      \mathrm{Id}_{\underline{\mathbb C}^{n + 1}}, \Lambda \right]
      \beta^* \wedge dz, \beta^* \wedge dz \right\rangle d \mu_{\varepsilon} - \int_D \chi^2 |
      g |^2 | \beta^* |^4 d \mu_{\varepsilon} \right)\\
      & = & \frac{1}{\int_D d \mu_{\varepsilon}} \int_D \chi^2
      \frac{(n - 1) | z |^4}{| g |^{10}} d \mu_{\varepsilon}\\
      & \leqslant & \frac{n - 1}{M^{10}} \frac{1}{\int_D d \mu_{\varepsilon}} \int_D | z
      |^4 d \mu_{\varepsilon}\\
      & \leqslant & \frac{n - 1}{M^{10}} \left[ \frac{C'}{(1 +
      \varepsilon)^2} + \sup_{z \in D} | z |^4 \cdot \frac{n^n (1 +
      \varepsilon)^n}{c_M} \left( \frac{M^2}{M^2 + \rho^2} \right)^{n (1 +
      \varepsilon)} \int_D F  dV \right] .
    \end{eqnarray*}
    Combining these two estimates,
    we obtain
    \begin{eqnarray*}
      &  & \frac{\varepsilon}{\int_D d \mu_{\varepsilon}} \left( n \int_D \chi^2\left\langle
      \left[ i \partial\bar\partial \log | g |^2 \otimes
      \mathrm{Id}_{\underline{\mathbb C}^{n + 1}}, \Lambda \right]
      \beta^* \wedge dz, \beta^* \wedge dz \right\rangle d \mu_{\varepsilon} - \frac{\left(
      \int_D \chi | \beta^* |^2 d \mu_{\varepsilon} \right)^2}{\int_D e^{- \log | g
      |^2} d \mu_{\varepsilon}} \right)\\
      & \leqslant & \frac{\varepsilon C'}{(1 + \varepsilon)^2} + \sup_{z \in
      D} | z |^4 \cdot \frac{\varepsilon n^n (1 + \varepsilon)^n}{c_M} \left(
      \frac{M^2}{M^2 + \rho^2} \right)^{n (1 + \varepsilon)} \int_D F  dV\\
      &  &+\varepsilon\frac{c_2}{M^2} \frac{n^n (1 + \varepsilon)^n}{c_M}
      \left( \frac{M^2}{M^2 + \rho^2} \right)^{n (1 + \varepsilon)} \int_D F dV\\
      &  & + \frac{n - 1}{M^{10}} \left[ \frac{\varepsilon C'}{(1 +
      \varepsilon)^2} + \sup_{z \in D} | z |^4 \cdot \frac{\varepsilon n^n
      (1 + \varepsilon)^n}{c_M} \left( \frac{M^2}{M^2 + \rho^2} \right)^{n (1
      + \varepsilon)} \int_D F  dV \right]\\
      & \rightarrow & 0.
    \end{eqnarray*}

\end{proof}

\begin{proposition}\label{rmk:r-generators}
Let $1\leqslant r<n$, write
\[
 z=(z',z'')\in\mathbb C^r\times\mathbb C^{n-r},
 \qquad L=\{z'=0\},
\]
and take the holomorphic tuple
\[
 g=(z_1,\ldots,z_r,M)\in\mathcal O(D)^{\oplus(r+1)},
 \qquad |g|^2=M^2+|z'|^2.
\]
Let $F$ be non-negative, continuous, and integrable on the bounded domain
$D$, and set
\[
 d\mu_\varepsilon
 =F(z',z'')(M^2+|z'|^2)^{-r(1+\varepsilon)}
  dV_{z'} dV_{z''}.
\]
For a bounded continuous function $\zeta$, define
\[
 G_\zeta(z')=
 \int_{\{z'':(z',z'')\in D\}}
 \zeta(z',z'')F(z',z'') dV_{z''}.
\]
Assume that $G_\zeta$ is continuous at $0$ for every such $\zeta$, and that
\[
 0<\int_{D\cap L}F(0,z'') dV_L(z'')<\infty.
\]
Then
\begin{equation}\label{equ:slice-concentration}
 \frac{d\mu_\varepsilon}{\int_Dd\mu_\varepsilon}
 \longrightarrow
 \frac{F|_L dV_L}{\int_{D\cap L}F dV_L}
\end{equation}
weakly as $\varepsilon\to\infty$.  Equivalently, every bounded continuous
function $\zeta$ satisfies
\[
 \lim_{\varepsilon\to\infty}
 \frac{\int_D\zeta d\mu_\varepsilon}{\int_Dd\mu_\varepsilon}
 =\frac{\int_{D\cap L}\zeta(0,z'')F(0,z'') dV_L(z'')}
 {\int_{D\cap L}F(0,z'') dV_L(z'')}.
\]
\end{proposition}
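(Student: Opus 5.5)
The plan is to reduce everything, via Fubini, to a one-variable-block concentration statement in $z'\in\mathbb C^r$, which is exactly the mechanism of step 1 of Proposition~\ref{prop:key-estimate} with $n$ replaced by $r$. Let $D'$ denote the projection of $D$ to $\mathbb C^r$; it is bounded. Extending $F$ and $\zeta F$ by zero outside $D$, Fubini gives
\[
 \int_D\zeta\, d\mu_\varepsilon=\int_{D'}G_\zeta(z')\,(M^2+|z'|^2)^{-r(1+\varepsilon)}\,dV_{z'},
\]
and in particular $\int_Dd\mu_\varepsilon=\int_{D'}G_1(z')(M^2+|z'|^2)^{-r(1+\varepsilon)}dV_{z'}$. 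Here $G_1(0)=\int_{D\cap L}F(0,z'')dV_L>0$ by hypothesis. Moreover $G_\zeta(0)$ is the numerator of the claimed limit. So the statement reduces to
\[
 \frac{\int_{D'}G_\zeta\, w_\varepsilon\, dV_{z'}}{\int_{D'}G_1\, w_\varepsilon\, dV_{z'}}\longrightarrow\frac{G_\zeta(0)}{G_1(0)},
 \qquad w_\varepsilon=(M^2+|z'|^2)^{-r(1+\varepsilon)}.
\]

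The key step is to show that the probability measures
\[
 \lambda_\varepsilon=\frac{w_\varepsilon\,\mathbf 1_{D'}\,dV_{z'}}{\int_{D'}w_\varepsilon\, dV_{z'}}
\]
converge weakly to $\delta_0$ on $\mathbb C^r$, and that $\int G\,d\lambda_\varepsilon\to G(0)$ for every bounded function $G$ that is continuous at $0$. First, $0\in D'$ with a neighborhood, since $G_1(0)>0$ forces $D\cap L\neq\emptyset$ and $D$ is open. The lower bound is then obtained exactly as in \eqref{equ:estimate}. On the ball $|z'|\le M/\sqrt{r(1+\varepsilon)}$ one has $w_\varepsilon\ge e^{-1}M^{-2r(1+\varepsilon)}$, so
\[
 \int_{D'}w_\varepsilon\ge c\,M^{-2r(1+\varepsilon)}(1+\varepsilon)^{-r}.
\]
The upper bound off $B(0,\rho)$ is $(M^2+\rho^2)^{-r(1+\varepsilon)}|D'|$. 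The ratio of the two is therefore bounded by a polynomial in $\varepsilon$ times $(M^2/(M^2+\rho^2))^{r(1+\varepsilon)}$, which tends to $0$, as in \eqref{equ:exponent}.

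Granted this, the $\delta$-argument of Proposition~\ref{prop:key-estimate} applies verbatim, using only continuity at $0$ and boundedness. Boundedness holds because $|G_\zeta|\le\sup|\zeta|\,\sup_{z'}\int F(z',\cdot)\,dV_{z''}$. Should this supremum fail to be finite, I would use instead that $G_\zeta$ is integrable over $D'$ (since $F\in L^1$) together with the uniform bound $w_\varepsilon/\int w_\varepsilon\le \mathrm{poly}(\varepsilon)\,(M^2/(M^2+\rho^2))^{r(1+\varepsilon)}$ off $B(0,\rho)$. This version is actually cleaner and needs only $\int_D F<\infty$, so I will use it for both $\zeta$ and $1$. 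Applying the conclusion to $G_\zeta$ and $G_1$ and dividing gives the limit, since $G_1(0)>0$.

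The main obstacle is this tail control. The region away from $L$ must be handled through integrability of $G_\zeta$ in $z'$, not pointwise bounds. The interior part must use continuity of $G_\zeta$ at $0$, which is precisely the assumed hypothesis. No estimate on $F$ in the $z''$ directions is available beyond these, so Fubini must be arranged so that they suffice. The equivalence with weak convergence of measures is then immediate by taking $\zeta$ to be test functions.
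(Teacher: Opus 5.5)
Your proof is correct and follows essentially the same route as the paper: Fubini to reduce to the fiber integrals $G_\zeta$, tail control by the supremum of the normalized kernel off $B(0,\rho)$ times $\|G_\zeta\|_{L^1}$, continuity of $G_\zeta$ at $0$ near the origin, and then the quotient with $\zeta=1$. The only difference is cosmetic. You normalize by $\int_{D'}w_\varepsilon$ using the small-ball lower bound from \eqref{equ:estimate}, whereas the paper normalizes by the full-space integral $c_\varepsilon=\int_{\mathbb C^r}k_\varepsilon$, computed exactly via Gamma functions. You were also right to drop the boundedness assumption on $G_\zeta$ in favour of the $L^1$ argument, which is exactly what the paper does.
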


\begin{proof}
Define
\[
 k_\varepsilon(z')=(M^2+|z'|^2)^{-r(1+\varepsilon)},
 \qquad
 c_\varepsilon=\int_{\mathbb C^r}k_\varepsilon(z') dV_{z'}.
\]
Polar coordinates show that $c_\varepsilon$ is finite and positive.  We claim that
$K_\varepsilon=c_\varepsilon^{-1}k_\varepsilon$ is an approximate identity
at the origin.  Indeed, for every $\rho>0$, the same inner-ball and
outer-ball estimates used in Proposition \ref{prop:key-estimate}, with $r$
in place of $n$, give
\begin{equation}\label{equ:normal-approximate-identity}
 \int_{|z'|\geqslant\rho}K_\varepsilon(z') dV_{z'}
 \longrightarrow0.
\end{equation}
By the continuity assumption on the fiber integrals,
\[
 G_\zeta(z')\longrightarrow
 G_\zeta(0)=
 \int_{D\cap L}\zeta(0,z'')F(0,z'') dV_L(z'')
\]
as $z'\to0$.  Moreover, Fubini's theorem gives
\[
 \|G_\zeta\|_{L^1(\mathbb C^r)}
 \leqslant\|\zeta\|_\infty\int_D F\,dV<\infty.
\]
Polar coordinates give
\[
 c_\varepsilon
 =\pi^r M^{-2r\varepsilon}
   \frac{\Gamma(r\varepsilon)}{\Gamma(r(1+\varepsilon))}.
\]
Consequently, for every $\rho>0$,
\[
\begin{aligned}
 \sup_{|z'|\geqslant\rho}K_\varepsilon(z')
 &=\frac{\Gamma(r(1+\varepsilon))}
         {\pi^rM^{2r}\Gamma(r\varepsilon)}
   \left(\frac{M^2}{M^2+\rho^2}\right)^{r(1+\varepsilon)}\\
 &\leqslant\frac{[r(1+\varepsilon)]^r}{\pi^rM^{2r}}
   \left(\frac{M^2}{M^2+\rho^2}\right)^{r(1+\varepsilon)}
 \longrightarrow0,
\end{aligned}
\]
where we used
$\Gamma(r(1+\varepsilon))/\Gamma(r\varepsilon)
=\prod_{j=0}^{r-1}(r\varepsilon+j)$.
It follows that
\[
 \left|\int_{|z'|\geqslant\rho}
 K_\varepsilon(z')G_\zeta(z')\,dV_{z'}\right|
 \leqslant\sup_{|z'|\geqslant\rho}K_\varepsilon(z')
           \|G_\zeta\|_{L^1(\mathbb C^r)}
 \longrightarrow0.
\]
Combining this estimate with continuity at $0$,
\eqref{equ:normal-approximate-identity}, and Fubini's theorem gives
\[
 \frac{1}{c_\varepsilon}\int_D\zeta d\mu_\varepsilon
 =\int_{\mathbb C^r}K_\varepsilon(z')G_\zeta(z') dV_{z'}
 \longrightarrow G_\zeta(0).
\]
Apply the same identity with $\zeta=1$.  Its limit is positive by
hypothesis, so taking the quotient proves
\eqref{equ:slice-concentration}.
\end{proof}

\begin{remark}\label{rmk:slice-obstruction}

When $r<n$, the Skoda weight localizes only the $r$ directions normal to
$L$, the remaining $n-r$ variables survive in the limiting measure.  This
is fundamentally different from Proposition \ref{prop:key-estimate}, where
the holomorphic tuple $g\in\mathcal O(D)^{\oplus(n+1)}$ produces a point
mass.
\end{remark}

\subsection{Proof of the converse}
\label{sec:proof-converse}

We now apply the radial calculation to obtain Theorem \ref{thm:main-inverse-general}.  We first show that under the hypotheses of Theorem \ref{thm:main-inverse-general}, the trace of the complex Hessian of $\varphi$ is non-negative.
Finally, affine invariance of the hypotheses allows us to conclude that the complex Hessian is non-negative definite.
\begin{lemma}
  \label{lem:trace}
  Let $D\subset\mathbb C^n$ be a domain,  $0\in D$ and
  $\varphi\in C^2(D)$.  For the holomorphic tuple
  $g=(z_1,\ldots,z_n,M)\in\mathcal O(D)^{\oplus(n+1)}$, let $\beta$ be the
  second fundamental form of $S=\ker g$.  Then
  \[ \langle [i \partial\bar\partial \varphi\otimes
     \mathrm{Id}_{\underline{\mathbb C}^{n+1}}, \Lambda] \beta^* \wedge
     dz, \beta^* \wedge dz \rangle |_0 = \frac{1}{M^4} \operatorname{tr} i
     \partial\bar\partial \varphi |_0 . \]
\end{lemma}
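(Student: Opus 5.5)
This is a pointwise computation at the origin. I will combine Corollary \ref{cor:2nd-fundamental-compute} with the curvature contraction formula \eqref{equ:curvature-operator-general}.

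First, by Corollary \ref{cor:2nd-fundamental-compute}(1), at $0$ we have
\[
\beta^*\wedge dz\big|_0=\sum_{k=1}^n v_k(0)\,d\bar z_k\wedge dz,
\qquad v_k(0)=-\frac{e_k}{M^2},
\]
where $e_k$ is the $k$-th standard basis vector of $\mathbb C^{n+1}$. Up to the sign convention relating $d\bar z_k\wedge dz$ and $dz\wedge d\bar z_k$, which is a unimodular constant and drops out of the hermitian pairing, this is an $\underline{\mathbb C}^{n+1}$-valued $(n,1)$-form with coefficients $u_{k\lambda}=-M^{-2}\delta_{k\lambda}$ for $1\leqslant k\leqslant n$ and $1\leqslant\lambda\leqslant n+1$. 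Note that $u_{k,n+1}=0$.

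Next, write $i\partial\bar\partial\varphi=i\sum_{j,k}\varphi_{j\bar k}\,dz_j\wedge d\bar z_k$. The operator $i\partial\bar\partial\varphi\otimes\mathrm{Id}$ has curvature coefficients $c_{j\bar k\lambda\bar\mu}=\varphi_{j\bar k}\,\delta_{\lambda\mu}$ in the standard coordinates and the standard frame. These are orthonormal at $0$ for the Euclidean K\"ahler metric and the flat metric on $\underline{\mathbb C}^{n+1}$. Formula \eqref{equ:curvature-operator-general} then gives
\[
\big\langle[i\partial\bar\partial\varphi\otimes\mathrm{Id},\Lambda]u,u\big\rangle
=\sum_{j,k}\varphi_{j\bar k}\sum_{\lambda}u_{j\lambda}\overline{u_{k\lambda}}
=\sum_{j,k}\varphi_{j\bar k}\,\frac{\delta_{jk}}{M^4}
=\frac{1}{M^4}\operatorname{tr}\,i\partial\bar\partial\varphi\big|_0 .
\]
The inner sum over $\lambda$ is $\langle v_j(0),v_k(0)\rangle=\delta_{jk}/M^4$, the same Gram matrix as computed in Lemma \ref{lem:2nd-form} at $z=0$.

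As a consistency check, replacing $\varphi$ by $\log|g|^2$ makes $\varphi_{j\bar k}(0)=\delta_{jk}/M^2$. The formula then returns $n/M^6$, in agreement with Corollary \ref{cor:2nd-fundamental-compute}(3).

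The only delicate point is bookkeeping of conventions: matching the transpose and index placement ($\varphi_{j\bar k}$ versus the order of $u_{j\lambda}\overline{u_{k\mu}}$) and the sign from reordering $dz$ past $d\bar z_k$. Both are harmless here because the Gram matrix $\langle v_j(0),v_k(0)\rangle$ is a scalar multiple of the identity. Only the trace survives, and the trace is insensitive to transposition.
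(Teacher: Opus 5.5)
Your proposal is correct and follows the paper's proof: you read off $\beta^*|_0=-M^{-2}(d\bar z_1,\ldots,d\bar z_n,0)$ from Corollary~\ref{cor:2nd-fundamental-compute} and substitute $u_{k\lambda}=-M^{-2}\delta_{k\lambda}$ and $c_{j\bar k\lambda\bar\mu}=\varphi_{j\bar k}\delta_{\lambda\mu}$ into \eqref{equ:curvature-operator-general}. Your consistency check against Corollary~\ref{cor:2nd-fundamental-compute}(3) is a useful addition, as is your observation that the Gram matrix $\langle v_j(0),v_k(0)\rangle=M^{-4}\delta_{jk}$ makes the transpose and sign conventions irrelevant.
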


\begin{proof}
  Write
  \[
   i\partial\bar\partial\varphi
   =i\sum_{j,k}\varphi_{j\bar k} dz_j\wedge d\bar z_k.
  \]
  At the origin, Corollary \ref{cor:2nd-fundamental-compute} gives
  \[
   \beta^*=-M^{-2}(d\bar z_1,\ldots,d\bar z_n,0).
  \]
  It then follows from
  \eqref{equ:curvature-operator-general} that
  \begin{eqnarray*}
    \langle [i \partial\bar\partial \varphi\otimes
    \mathrm{Id}_{\underline{\mathbb C}^{n+1}}, \Lambda] \beta^* \wedge dz, \beta^* \wedge dz \rangle |_0
    & = & \frac{1}{M^4}\sum_{j=1}^n\varphi_{j\bar j}(0)\\
    & = & \frac{1}{M^4} \operatorname{tr} i \partial\bar\partial \varphi |_0 .
  \end{eqnarray*}
\end{proof}

\begin{proposition}
  \label{prop:trace}
  Let $D\subset\mathbb C^n$ be a bounded domain and
  $\varphi\in C^2(D)$.  Suppose that a non-zero
  $f\in A^2(D,\varphi)$ is Skoda divisible for every
  holomorphic tuple $g\in\mathcal O(D)^{\oplus(n+1)}$ without a common zero
  and every $\varepsilon>0$ for
  which $J_{\varphi,g,\varepsilon}(f)<\infty$.  Then $\varphi$ is
  subharmonic on $D$.
\end{proposition}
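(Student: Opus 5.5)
The plan is to derive the pointwise inequality $\operatorname{tr} i\partial\bar\partial\varphi(z_0)\geqslant 0$ at every $z_0\in D$. By Lemma~\ref{lem:affine} applied to the translation $T(z)=z-z_0$ (with $A=\mathrm{Id}$, so the number of generators is preserved), we may assume $z_0=0\in D$. We then test the hypothesis with the tuple $g=(z_1,\ldots,z_n,M)\in\mathcal O(D)^{\oplus(n+1)}$, for which $p=n$.

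Since $D$ is bounded and $|g|^2\geqslant M^2$, the weight $e^{-\psi_{g,\varepsilon}}/|g|^2$ is bounded. Hence $J_{\varphi,g,\varepsilon}(f)<\infty$ for every $\varepsilon>0$, because $f\in A^2(D,\varphi)$. By Proposition~\ref{prop:dbar} there is an $S$-valued $u$ with $\bar\partial_S u=\beta^*f$ and $\|u\|^2_{\varphi+\psi_{g,\varepsilon}}\leqslant\varepsilon^{-1}J_{\varphi,g,\varepsilon}(f)$.

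\textbf{Duality step.} For a compactly supported smooth $S$-valued $(n,1)$-form $\alpha$, pairing gives
\[
\big|\llangle \beta^*f\wedge dz,\alpha\rrangle\big|^2\leqslant\frac1\varepsilon J_{\varphi,g,\varepsilon}(f)\,\|\bar\partial_S^*\alpha\|^2,
\]
with all norms taken for the metric $e^{-\varphi-\psi_{g,\varepsilon}}$ on $S$. We bound $\|\bar\partial^*_S\alpha\|^2$ using the Bochner--Kodaira--Nakano identity on $S$. On $(n,1)$-forms $D'_S\alpha=0$ for degree reasons, so
\[
\|\bar\partial^*_S\alpha\|^2\leqslant\|\partial^*\alpha\|^2+\|\bar\partial_S\alpha\|^2+\llangle[i\partial\bar\partial(\varphi+\psi_{g,\varepsilon})\otimes\mathrm{Id}+i\beta^*\wedge\beta,\Lambda]\alpha,\alpha\rrangle .
\]
We take $\alpha=\chi f\beta^*\wedge dz$, with $\chi$ the cutoff of Proposition~\ref{prop:key-estimate}. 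Since $\beta^*=-\bar\partial g^*$ and $\bar\partial_S=\bar\partial_E|_S$, we have $\bar\partial_S(\beta^*f)=0$, so $\bar\partial_S\alpha=\bar\partial\chi\wedge f\beta^*\wedge dz$ is supported away from $0$. Also $\partial^*$ only differentiates in $\bar z$ directions, where $f$ is holomorphic; hence $\partial^*\alpha=\chi f\,\partial^*(\beta^*\wedge dz)+(\text{terms with }\bar\partial\chi)$.

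Set $F=|f|^2e^{-\varphi}$ and $d\mu_\varepsilon=Fe^{-n(1+\varepsilon)\log|g|^2}dV$, as in Proposition~\ref{prop:key-estimate}; we may choose the origin so that $f(0)\neq0$, which is possible on a dense set and suffices by continuity of $\operatorname{tr} i\partial\bar\partial\varphi$. Substituting, the inequality becomes
\[
\frac{\varepsilon\big(\int\chi|\beta^*|^2d\mu_\varepsilon\big)^2}{\int e^{-\log|g|^2}d\mu_\varepsilon}-n(1+\varepsilon)\!\int\!\chi^2\langle[i\partial\bar\partial\log|g|^2\otimes\mathrm{Id},\Lambda]\beta^*\wedge dz,\beta^*\wedge dz\rangle d\mu_\varepsilon\leqslant \int\chi^2\langle[i\partial\bar\partial\varphi\otimes\mathrm{Id}+i\beta^*\wedge\beta,\Lambda]\beta^*\wedge dz,\beta^*\wedge dz\rangle d\mu_\varepsilon+E_\varepsilon ,
\]
where $E_\varepsilon$ collects $\int\chi^2|\partial^*(\beta^*\wedge dz)|^2d\mu_\varepsilon$ and the cutoff terms.

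\textbf{Limit $\varepsilon\to\infty$.} We divide by $\int d\mu_\varepsilon$ and let $\varepsilon\to\infty$. By Proposition~\ref{prop:key-estimate}, the left side tends to $-n^2/M^6$. By the weak convergence $\nu_\varepsilon\to\delta_0$, Lemma~\ref{lem:trace}, and Corollary~\ref{cor:2nd-fundamental-compute}, the main right-hand term tends to
\[
M^{-4}\operatorname{tr}i\partial\bar\partial\varphi(0)-\Big|\sum_j\beta_jv_j(0)\Big|^2=M^{-4}\operatorname{tr}i\partial\bar\partial\varphi(0)-\frac{n^2}{M^8}.
\]
Here $\beta_j$ is the adjoint of $v_j$, so the sum equals $\sum_j|v_j|^2=n/M^4$. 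The error terms vanish in the limit: $\partial^*(\beta^*\wedge dz)(0)=0$ by Corollary~\ref{cor:2nd-fundamental-compute}, and it is continuous, so its normalized integral tends to $0$; the $\bar\partial\chi$ terms live outside $B(0,\rho)$ and decay exponentially by \eqref{equ:exponent}.

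\textbf{Conclusion and main obstacle.} We obtain $-n^2/M^6\leqslant M^{-4}\operatorname{tr}i\partial\bar\partial\varphi(0)-n^2/M^8$, that is, $\operatorname{tr}i\partial\bar\partial\varphi(0)\geqslant n^2/M^4-n^2/M^2$. Letting $M\to\infty$ gives $\operatorname{tr}i\partial\bar\partial\varphi(0)\geqslant0$, i.e.\ $\varphi$ is subharmonic.

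The delicate step is the bookkeeping in the duality/BKN step. One must verify that $\partial^*$ (the weighted formal adjoint) produces no first-order terms in $\varphi+\psi$ that survive at order $\varepsilon$; this uses that the Chern $D'^*$ acts through $\bar\partial$ on coefficients, by \eqref{equ:chern-adjoint}. One must also check that the sign of the $i\beta^*\wedge\beta$ contribution is exactly as computed. If a residual $O(\varepsilon)$ term appeared there, I would first try to absorb it into the variance-type estimate used in the proof of Proposition~\ref{prop:key-estimate}.
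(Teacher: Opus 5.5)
Your proposal is correct and follows the paper's proof essentially step for step. It uses the same test tuple $g=(z_1,\ldots,z_n,M)$, the same duality pairing against $\alpha=\chi f\beta^*\wedge dz$, and the Bochner--Kodaira--Nakano identity on $S$ (where $D'^*_S$ is indeed the unweighted $\partial^*$, so no $O(\varepsilon)$ terms arise), and concludes with Proposition~\ref{prop:key-estimate} as $\varepsilon\to\infty$. The only differences are cosmetic: the paper simply discards the non-positive $i\beta^*\wedge\beta$ term and the term $-\|\bar\partial_S\alpha\|^2$ (which enters with a minus sign, not the plus you wrote, though your upper bound remains valid). It thus obtains $\operatorname{tr}i\partial\bar\partial\varphi(0)\geqslant-n^2/M^2$ rather than your slightly sharper $n^2/M^4-n^2/M^2$, and both give the same conclusion as $M\to\infty$.
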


\begin{proof}
  We must prove
  $\operatorname{tr}i\partial\bar\partial\varphi\geqslant0$.  Suppose to
  the contrary that this trace is negative at $z_0$. Without loss of generality, assume $f(z_0)\ne0$. Thanks to Lemma \ref{lem:affine},
  we may also assume $z_0=0$.

  Fix $M>0$ and choose
  $g=(z_1,\ldots,z_n,M)\in\mathcal O(D)^{\oplus(n+1)}$, here $p=n$. Since $| g |^2 \geqslant
  M^2$, $| g |^{- 2 n (1 + \varepsilon)}$
  and $| g |^{- 2 n (1 + \varepsilon) - 2}$ are bounded for fixed $M$ and
  $\varepsilon > 0$,  set
  $\Phi_{\varepsilon}=\varphi+\psi_{g,\varepsilon}$,
  hence
  \[ \int_D | f |^2 e^{- \Phi_{\varepsilon}}  dV < \infty \]
  and
  \[ J_{\varphi,g,\varepsilon}(f) = \int_D \frac{| f |^2}{| g |^2} e^{- \Phi_{\varepsilon}}  dV < \infty . \]
  It follows from Proposition \ref{prop:dbar} that there exists  an $S$-valued section
  $u_\varepsilon$ satisfying
  \[ \bar\partial_S u_{\varepsilon} = \beta^* f \]
  with
  \begin{equation}
    \int_D | u_{\varepsilon} |^2 e^{- \Phi_{\varepsilon}}  dV \leqslant
    \frac{1}{\varepsilon} J_{\varphi,g,\varepsilon}(f) .
    \label{equ:dbar-estimate}
  \end{equation}
  Choose $\chi\in C_c^\infty(D)$ with $0\leqslant\chi\leqslant1$ and
  $\chi=1$ near $0$.  Passing to top holomorphic degree, define
  \[ \tilde{u}_{\varepsilon} = u_{\varepsilon} \wedge dz, \alpha = \chi
     \beta^* f \wedge dz, \]
  Then
  $\bar\partial_S\widetilde u_\varepsilon=\beta^*f\wedge dz$.  Because
  $\alpha$ is compactly supported, we have
  \begin{eqnarray*}
    \int_D \chi | \beta^* |^2 | f |^2 e^{- \Phi_{\varepsilon}}  dV & = & \llangle
    \bar\partial \tilde{u}_{\varepsilon}, \alpha \rrangle_{\Phi_{\varepsilon}}\\
    & = & \llangle \tilde{u}_{\varepsilon}, \bar\partial^*_S \alpha
    \rrangle_{\Phi_{\varepsilon}} .
  \end{eqnarray*}
  The Cauchy--Schwarz inequality and \eqref{equ:dbar-estimate} imply
  \[ \left( \int_D \chi | \beta^* |^2 | f |^2 e^{- \Phi_{\varepsilon}}  dV \right)^2
     \leqslant \| \tilde{u}_{\varepsilon} \|^2_{\Phi_{\varepsilon}} \| \bar\partial^*_S
     \alpha \|^2_{\Phi_{\varepsilon}} \leqslant \frac{1}{\varepsilon}
     J_{\varphi,g,\varepsilon}(f) \| \bar\partial^*_S \alpha \|^2_{\Phi_{\varepsilon}} \]
  and therefore
  \[ \| \bar\partial^*_S \alpha \|^2_{\Phi_{\varepsilon}} \geqslant
     \frac{\varepsilon}{J_{\varphi,g,\varepsilon}(f)} \left( \int_D \chi |
     \beta^* |^2 | f |^2 e^{- \Phi_{\varepsilon}}  dV \right)^2 . \]
  It follows from the K{\"a}hler identities and \eqref{equ:chern-adjoint} that
  \[ \| D'_S{}^*\alpha \|^2_{\Phi_{\varepsilon}}=\|\partial^*\alpha\|^2_{\Phi_{\varepsilon}}
     = \int_D | f |^2 | \partial^* (\chi \beta^*
     \wedge dz) |^2 e^{- \Phi_{\varepsilon}}  dV. \]
  It then follows from the Bochner-Kodaira equality that
  \begin{eqnarray*}
    &  & \| \bar\partial_S^* \alpha \|^2_{\Phi_{\varepsilon}} + \| \bar\partial
    \alpha \|^2_{\Phi_{\varepsilon}}\\
    & = & \| D'_S{}^*\alpha \|^2_{\Phi_{\varepsilon}} + \int_D \chi^2 \left\langle
    \left[ i \partial\bar\partial \varphi \otimes
    \mathrm{Id}_{\underline{\mathbb C}^{n + 1}}, \Lambda \right] \beta^*
    \wedge dz, \beta^* \wedge dz \right\rangle  | f |^2 e^{- \Phi_{\varepsilon}}  dV\\
    &  & + n (1 + \varepsilon) \int_D \chi^2 \left\langle \left[ i \partial
    \bar\partial \log | g |^2 \otimes \mathrm{Id}_{\underline{\mathbb C}^{n +
    1}}, \Lambda \right] \beta^* \wedge dz, \beta^* \wedge dz
    \right\rangle  | f |^2 e^{- \Phi_{\varepsilon}}  dV\\
    &  & + \int_D \chi^2 \langle [i \beta^* \wedge \beta, \Lambda]
    \beta^* \wedge dz, \beta^* \wedge dz \rangle  | f |^2 e^{-
    \Phi_{\varepsilon}}  dV.
  \end{eqnarray*}
  Thus
  \begin{eqnarray*}
    &  & \frac{\varepsilon}{J_{\varphi,g,\varepsilon}(f)} \left( \int_D \chi |
    \beta^* |^2 | f |^2 e^{- \Phi_{\varepsilon}}  dV \right)^2\\
    & \leqslant & \| \partial^* \alpha \|^2_{\Phi_{\varepsilon}} + \int_D \chi^2
    \left\langle \left[ i \partial\bar\partial \varphi \otimes
    \mathrm{Id}_{\underline{\mathbb C}^{n + 1}}, \Lambda \right] \beta^*
    \wedge dz, \beta^* \wedge dz \right\rangle  | f |^2 e^{- \Phi_{\varepsilon}}  dV\\
    &  & + n (1 + \varepsilon) \int_D \chi^2 \left\langle \left[ i \partial
    \bar\partial \log | g |^2 \otimes \mathrm{Id}_{\underline{\mathbb C}^{n +
    1}}, \Lambda \right] \beta^* \wedge dz, \beta^* \wedge dz
    \right\rangle  | f |^2 e^{- \Phi_{\varepsilon}}  dV\\
    &  & + \int_D \chi^2 \langle [i \beta^* \wedge \beta, \Lambda]
    \beta^* \wedge dz, \beta^* \wedge dz \rangle  | f |^2 e^{-
    \Phi_{\varepsilon}}  dV - \| \bar\partial \alpha \|^2_{\Phi_{\varepsilon}}\\
    & \leqslant & \| \partial^* \alpha \|^2_{\Phi_{\varepsilon}} + \int_D \chi^2
    \left\langle \left[ i \partial\bar\partial \varphi \otimes
    \mathrm{Id}_{\underline{\mathbb C}^{n + 1}}, \Lambda \right] \beta^*
    \wedge dz, \beta^* \wedge dz \right\rangle  | f |^2 e^{- \Phi_{\varepsilon}}  dV\\
    &  & + n (1 + \varepsilon) \int_D \chi^2 \left\langle \left[ i \partial
    \bar\partial \log | g |^2 \otimes \mathrm{Id}_{\underline{\mathbb C}^{n +
    1}}, \Lambda \right] \beta^* \wedge dz, \beta^* \wedge dz
    \right\rangle  | f |^2 e^{- \Phi_{\varepsilon}}  dV,
  \end{eqnarray*}
  that is,
  \begin{eqnarray}
    &  & \int_D \chi^2 \left\langle \left[ i \partial\bar\partial \varphi
    \otimes \mathrm{Id}_{\underline{\mathbb C}^{n + 1}}, \Lambda \right]
    \beta^* \wedge dz, \beta^* \wedge dz \right\rangle  | f |^2
    e^{- \Phi_{\varepsilon}}  dV + \| \partial^* \alpha \|^2_{\Phi_{\varepsilon}} \nonumber\\
    & \geqslant & \frac{\varepsilon}{J_{\varphi,g,\varepsilon}(f)} \left(
    \int_D \chi | \beta^* |^2 | f |^2 e^{- \Phi_{\varepsilon}}  dV \right)^2
    \label{equ:estimate3}\\
    &  & - n (1 + \varepsilon) \int_D \chi^2 \left\langle \left[ i \partial
    \bar\partial \log | g |^2 \otimes \mathrm{Id}_{\underline{\mathbb C}^{n +
    1}}, \Lambda \right] \beta^* \wedge dz, \beta^* \wedge dz
    \right\rangle  | f |^2 e^{- \Phi_{\varepsilon}}  dV. \nonumber
  \end{eqnarray}
  It follows from Proposition \ref{prop:key-estimate} with $F = | f |^2 e^{-
  \varphi}$ and Lemma \ref{lem:trace} that
  \[ \frac{\int_D \chi^2 \left\langle \left[ i \partial\bar\partial \varphi
     \otimes \mathrm{Id}_{\underline{\mathbb C}^{n + 1}}, \Lambda \right]
     \beta^* \wedge dz, \beta^* \wedge dz \right\rangle  | f |^2
     e^{- \Phi_{\varepsilon}}  dV}{\int_D | f |^2 e^{- \Phi_{\varepsilon}}  dV} \rightarrow \frac{1}{M^4}
     \operatorname{tr} i \partial\bar\partial \varphi |_0 \]
  and
  \[ \frac{\| \partial^* \alpha \|^2_{\Phi_{\varepsilon}}}{\int_D | f |^2 e^{- \Phi_{\varepsilon}}  dV} = \frac{\int_D |f|^2 |\partial^* (\chi \beta^* \wedge dz) |^2
     e^{- \Phi_{\varepsilon}}  dV}{\int_D | f |^2 e^{- \Phi_{\varepsilon}}  dV}
     \rightarrow |\partial^* (\chi \beta^* \wedge dz) |^2 |_0 = 0 \]
  as $\varepsilon\to\infty$.  One also has, by \eqref{equ:estimate3},
  that
  \begin{eqnarray*}
    &  & \frac{1}{\int_D | f |^2 e^{- \Phi_{\varepsilon}}  dV} \left( \int_D \chi^2
    \left\langle \left[ i \partial\bar\partial \varphi \otimes
    \mathrm{Id}_{\underline{\mathbb C}^{n + 1}}, \Lambda \right] \beta^*
    \wedge dz, \beta^* \wedge dz \right\rangle  | f |^2 e^{- \Phi_{\varepsilon}}  dV
    + \| \partial^* \alpha \|^2_{\Phi_{\varepsilon}} \right)\\
    & \geqslant & \frac{\varepsilon}{J_{\varphi,g,\varepsilon}(f) \int_D | f
    |^2 e^{- \Phi_{\varepsilon}}  dV} \left( \int_D \chi | \beta^* |^2 | f |^2 e^{-
    \Phi_{\varepsilon}}  dV \right)^2\\
    &  & - \frac{n (1 + \varepsilon)}{\int_D | f |^2 e^{- \Phi_{\varepsilon}}  dV} \int_D
    \chi^2 \left\langle \left[ i \partial\bar\partial \log | g |^2 \otimes
    \mathrm{Id}_{\underline{\mathbb C}^{n + 1}}, \Lambda \right] \beta^*
    \wedge dz, \beta^* \wedge dz \right\rangle  | f |^2 e^{- \Phi_{\varepsilon}}  dV.
  \end{eqnarray*}
  Let $\varepsilon \rightarrow \infty$, it follows from Proposition
  \ref{prop:key-estimate} that
  \[ \operatorname{tr} i \partial\bar\partial \varphi |_0 \geqslant -
     \frac{n^2}{M^2} . \]
  Since the above inequality holds for every $M>0$, letting
  $M\rightarrow\infty$ contradicts
  $\operatorname{tr} i \partial \bar{\partial} \varphi |_0 < 0$. Thus
  $\operatorname{tr} i \partial \bar{\partial} \varphi$ is non-negative on $D$.
\end{proof}

\begin{proof}[Proof of Theorem \ref{thm:main-inverse-general}]
  Fix $z_0\in D$.  It follows from Lemma \ref{lem:affine} that both a translation of
  $z_0$ to the origin and an arbitrary invertible complex linear change of
  coordinates preserve Skoda divisibility.  More precisely, for
  $A\in\mathrm{GL}(n,\mathbb C)$ set
  $w=T(z)=A(z-z_0)$ and
  $\widetilde\varphi=\varphi\circ T^{-1}$.
  Proposition \ref{prop:trace} therefore implies
  $\operatorname{tr}i\partial\bar\partial\widetilde\varphi(0)\geqslant0$.

  Let
  $H_\varphi(z_0)=[\partial^2\varphi/\partial z_j\partial\bar z_k(z_0)]$.
  Since $z=z_0+A^{-1}w$, the chain rule gives
  \[ \left[ \frac{\partial^2 \tilde{\varphi}}{\partial w_j \partial \bar{w}_k}
     (0) \right]_{j, k} = (A^{- 1})^{\top} \left[ \frac{\partial^2
     \varphi}{\partial z_j \partial \bar z_k} (z_0) \right]_{j, k}
     \overline{A^{-1}} . \]
  Taking the trace and using the preceding non-negativity yields
  \[ \operatorname{tr} \left\{ (A^{- 1})^{\top} \left[ \frac{\partial^2
     \varphi}{\partial z_j \partial \bar z_k} (z_0) \right]_{j, k}
     \overline{A^{-1}} \right\} \geqslant 0, \]
  By cyclicity of the trace, this is equivalent to
  \[ \operatorname{tr} \left\{ \left[ \frac{\partial^2 \varphi}{\partial z_j \partial
     \bar z_k} (z_0) \right]_{j, k} \overline{A^{-1}}(A^{-1})^{\top} \right\} \geqslant
     0. \]
  As $A$ varies, the matrix
  $P=\overline{A^{-1}}(A^{-1})^\top$ ranges over all positive definite
  hermitian matrices.  Thus
  \[ \operatorname{tr} \left\{ \left[ \frac{\partial^2 \varphi}{\partial z_j \partial
     \bar z_k} (z_0) \right]_{j, k} P \right\} \geqslant 0 \]
  for every such $P$.  Given $v\in\mathbb C^n$ and $\delta>0$, choose
  $P=\delta\mathrm{Id}+vv^*$.  The trace inequality becomes
  \[ \langle i \partial\bar\partial \varphi (z_0) v, v \rangle + \delta
     \operatorname{tr} i \partial\bar\partial \varphi (z_0) \geqslant 0. \]
  Let $\delta\to 0$, we obtain
  \[ \langle i \partial\bar\partial \varphi (z_0) v, v \rangle \geqslant
     0 \]
  for every $v\in\mathbb C^n$.  Hence the Levi matrix of $\varphi$ is
  positive semi-definite at $z_0$.  Since $z_0$ was arbitrary,
  $\varphi$ is plurisubharmonic on $D$.
\end{proof}

\begin{remark}\label{rmk:partial-trace}
The slice limit also explains why the same proof cannot simply be run with
$r+1$ generators.  By Proposition \ref{rmk:r-generators}, the normalized
measure then converges along $L$, not at a point.  For
$F=|f|^2e^{-\varphi}$, the Levi contribution tends to the weighted average
\[
\frac{1}{M^4}
\frac{\int_{D\cap L}
\left(\sum_{j=1}^r\varphi_{j\bar j}(0,z'')\right)
|f(0,z'')|^2e^{-\varphi(0,z'')} dV_L(z'')}
{\int_{D\cap L}
|f(0,z'')|^2e^{-\varphi(0,z'')} dV_L(z'')},
\]
instead of the pointwise quantity
\[
\frac{1}{M^4}\sum_{j=1}^r\varphi_{j\bar j}(0).
\]
The limit contains values along $D\cap L$.  It therefore does
not imply pointwise non-negativity of the sum of $r$ Levi eigenvalues.  This
is the precise obstruction to replacing the $(n+1)$-generator test by an
$(r+1)$-generator test in the classical concentration proof.
\end{remark}
To summarize, we have the following result.
\begin{theorem}
\label{thm:equivalence}
Assume that $D\subset\mathbb C^n$ is bounded and pseudoconvex, that
$\varphi\in C^2(D)$, and that $A^2(D,\varphi)$ is non-trivial.  Then the
following conditions are equivalent:
\begin{enumeratenumeric}
 \item $\varphi$ is plurisubharmonic on $D$;
 \item every $f\in\mathcal O(D)$ is Skoda divisible
 with respect to $\varphi$;
 \item some non-zero $f\in A^2(D,\varphi)$ is Skoda divisible
 with respect to $\varphi$;
 \item every $f\in\mathcal O(D)$ is strongly Skoda divisible
 with respect to $\varphi$;
 \item some non-zero $f\in A^2(D,\varphi)$ is strongly Skoda divisible
 with respect to $\varphi$.
\end{enumeratenumeric}
\end{theorem}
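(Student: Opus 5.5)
The plan is to prove the chain of implications $1\Rightarrow 4\Rightarrow 2\Rightarrow 3\Rightarrow 1$, and separately $4\Rightarrow 5\Rightarrow 1$, with all hard analysis taken from earlier results. Here ``strongly Skoda divisible'' is read as the strong $L^2$ divisibility recalled before Theorem \ref{thm:strong-converse}.

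For $1\Rightarrow 4$ we use pseudoconvexity of $D$ and plurisubharmonicity of $\varphi$. The strong $L^2$ division theorem of \cite{li-meng-ning-zhou2025arxiv} (on which Theorem \ref{thm:strong-converse} is based) applies directly. For each admissible $\psi$ and $g$ without common zeros, it solves the $\bar\partial_S$-equation $\bar\partial_S u=\beta^*f$ with the $B_{\psi,g}^{-1}$ estimate, using the Gauss--Codazzi curvature of $S$ with weight $e^{-\varphi-\psi}$. This yields $h$ with $\int_D|h|^2e^{-\varphi-\psi}\,dV\leqslant I_{\varphi,\psi,g}(f)$ whenever the right side is finite. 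This holds for every $f\in\mathcal O(D)$.

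For $4\Rightarrow 2$ we choose $\psi=\psi_{g,\varepsilon}=p(1+\varepsilon)\log|g|^2$. By Skoda's Lemme Fondamental \cite{skoda1972}, as noted in the introduction,
\[
\left\langle B_{\psi_{g,\varepsilon},g}^{-1}\beta^*\wedge dz,\beta^*\wedge dz\right\rangle\leqslant \frac{1}{\varepsilon|g|^2}.
\]
Hence $I_{\varphi,\psi_{g,\varepsilon},g}(f)\leqslant (1+1/\varepsilon)J_{\varphi,g,\varepsilon}(f)$, and finiteness of $J$ gives finiteness of $I$. The strong estimate then gives the Skoda estimate, so $f$ is Skoda divisible. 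One technical point needs checking here: $\psi_{g,\varepsilon}$ must be admissible, i.e.\ smooth plurisubharmonic with $B_{\psi,g}>0$. Since $g$ has no common zeros, $\log|g|^2$ is smooth. If $B$ is only semipositive, we would instead invoke Skoda's Theorem \ref{thm:skoda-l2-division} directly, which gives $1\Rightarrow 2$ without passing through 4.

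The remaining implications are formal. $2\Rightarrow 3$ and $4\Rightarrow 5$ follow because $A^2(D,\varphi)$ is non-trivial by hypothesis, so a non-zero element exists. $3\Rightarrow 1$ is Theorem \ref{thm:main-inverse-general}, proved above via Proposition \ref{prop:trace} and affine invariance. $5\Rightarrow 1$ is Theorem \ref{thm:strong-converse}. Alternatively, $5\Rightarrow 3$ follows by the same Lemme Fondamental comparison as in $4\Rightarrow 2$, applied to the fixed $f$.

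The only nontrivial step is the comparison $I\leqslant(1+1/\varepsilon)J$, i.e.\ the Lemme Fondamental inequality for $B_{\psi_{g,\varepsilon},g}$. Since the introduction already asserts it, we cite it rather than reprove it.
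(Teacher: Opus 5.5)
Your proposal is correct and follows the paper's proof essentially step for step: $1\Rightarrow4$ by the strong division theorem, $4\Rightarrow5\Rightarrow1$ via Theorem~\ref{thm:strong-converse}, $4\Rightarrow2$ by Skoda's Lemme Fondamental with $\psi=\psi_{g,\varepsilon}$, $2\Rightarrow3$ trivially, and $3\Rightarrow1$ by Theorem~\ref{thm:main-inverse-general}. Your caveat is well founded: $B_{\psi_{g,\varepsilon},g}$ need not be strictly positive (e.g.\ when $g$ does not depend on $z_n$), which the paper passes over, and your fallback $1\Rightarrow2$ directly from Theorem~\ref{thm:skoda-l2-division} closes the cycle without needing that comparison.
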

\begin{proof}
The implication $(1)\Rightarrow(4)$ is the strong form of Skoda's division
theorem \cite{skoda1978}, see also \cite{li-meng-ning-zhou2025arxiv}.  Since $A^2(D,\varphi)\ne\{0\}$, condition (4)
implies (5), and Theorem \ref{thm:strong-converse} gives
$(5)\Rightarrow(1)$.  Thus (1), (4), and (5) are equivalent.  Skoda's Lemme
  Fondamental {\cite{skoda1972}}
 gives $(4)\Rightarrow(2)$ and
$(5)\Rightarrow(3)$, while $(2)\Rightarrow(3)$ is immediate.  Finally,
condition (3) contains the $(n+1)$-generator hypothesis of Theorem
\ref{thm:main-inverse-general}, hence $(3)\Rightarrow(1)$.
\end{proof}

\section{The converse of the surjectivity of trivial bundles}
\label{sec:higher-rank}

In this section we prove Theorem \ref{thm:flat-vector-converse}.  The method is essentially contained
in the rank-one case which is already proved. The proof of Theorem \ref{thm:flat-vector-converse} is reduced to the rank-one case by considering a sequence of special matrices.

We first record the pointwise orthogonal decomposition associated with a
surjective matrix
\[
 G:\underline{\mathbb C}^m\longrightarrow\underline{\mathbb C}^q.
\]
Its orthogonal right inverse is
\begin{equation}\label{equ:matrix-right-inverse}
 G^\dagger=G^\star(GG^\star)^{-1}.
\end{equation}
Thus $GG^\dagger=\operatorname{Id}_{\underline{\mathbb C}^q}$ and
\begin{equation*}
 |G^\dagger F|^2
 =\left\langle(GG^\star)^{-1}F,F\right\rangle.
\end{equation*}
Moreover, every lifting $H$ of $F$ has a unique orthogonal decomposition
\[
 H=G^\dagger F+u,\qquad u\in\ker G,
\]
and hence $|H|^2=|G^\dagger F|^2+|u|^2$.

\begin{proof}[Proof of Theorem \ref{thm:flat-vector-converse}]
For $q=1$, the first assertion is Theorem \ref{thm:main-inverse-general}.  We may
therefore assume that $q>1$.  Write
\[
 F=(f_1,\ldots,f_q).
\]
Without loss of generality, we may assume that
$f_1\not\equiv0$.  Since $F\in A^2(D,\underline{\mathbb C}^q,\varphi)$, we have
$f_1\in A^2(D,\varphi)$.

Fix $z_0\in D$, $A\in\operatorname{GL}(n,\mathbb C)$, and $M>0$, and
consider
\[
 g=(A(z-z_0),M)\in\mathcal O(D)^{\oplus(n+1)}.
\]
Its components have no common zero and $|g|^2\geqslant M^2$.  For $R>0$, define the
block matrix
\begin{equation}\label{equ:block-matrix}
 G_R=
 \begin{bmatrix}
  g&0\\
  0&R\operatorname{Id}_{\underline{\mathbb C}^{q-1}}
 \end{bmatrix}:
 \underline{\mathbb C}^{n+1}\oplus\underline{\mathbb C}^{q-1}\longrightarrow
 \underline{\mathbb C}\oplus\underline{\mathbb C}^{q-1}.
\end{equation}
It is pointwise surjective, and
\begin{equation*}
 G_RG_R^\star
 =\operatorname{diag}(|g|^2,R^2,\ldots,R^2),
 \qquad
 \det(G_RG_R^\star)=|g|^2R^{2(q-1)}.
\end{equation*}
Note that in this case $p$ in
\eqref{equ:matrix-basic-data} equals $n$.  Consequently,
\[
 \psi_{G_R,\varepsilon}
 =n(1+\varepsilon)\log|g|^2
 +2n(1+\varepsilon)(q-1)\log R.
\]
The last term is constant and cancels from the two sides of the division
estimate.

Let $g^*$ be the orthogonal right inverse in
\eqref{equ:orthogonal-right-inverse}.  From
\eqref{equ:matrix-right-inverse},
\begin{equation}\label{equ:block-canonical-lifting}
 G_R^\dagger F
 =\left(g^*f_1,\frac{f_2}{R},\ldots,\frac{f_q}{R}\right)
\end{equation}
and
\begin{equation}\label{equ:block-canonical-norm}
 |G_R^\dagger F|^2
 =\frac{|f_1|^2}{|g|^2}
 +\frac1{R^2}\sum_{\alpha=2}^q|f_\alpha|^2.
\end{equation}
All the relevant integrals are finite since $|g|\geqslant M$ is smooth on $\overline{D}$, and every component of $F$ belongs to the
weighted Bergman space.

The hypothesis gives $H_R\in\mathcal O(D)^{\oplus(n+q)}$ such that
$G_RH_R=F$ and
\[
 \int_D|H_R|^2e^{-\varphi-\psi_{G_R,\varepsilon}} dV
 \leqslant\left(1+\frac1\varepsilon\right)
 J_{\varphi,G_R,\varepsilon}(F).
\]
Put
\[
 u_R=H_R-G_R^\dagger F.
\]
By \eqref{equ:block-matrix},
\begin{equation*}
 \ker G_R=(\ker g)\oplus\{0\}^{q-1}.
\end{equation*}
We therefore identify $u_R$ with a section of
$S_g=\ker g\subset\underline{\mathbb C}^{n+1}$.  It then follows from
\eqref{equ:block-canonical-norm} that
\begin{align}
 &\int_D|u_R|^2
 e^{-\varphi-\psi_{g,\varepsilon}} dV \notag\\
 \leqslant&\frac1\varepsilon
 \int_D\left(\frac{|f_1|^2}{|g|^2}
 +\frac1{R^2}\sum_{\alpha=2}^q|f_\alpha|^2\right)
 e^{-\varphi-\psi_{g,\varepsilon}} dV,
 \label{equ:block-kernel-estimate}
\end{align}
where $\psi_{g,\varepsilon}=n(1+\varepsilon)\log|g|^2$.

Let $\beta_g$ denote the second fundamental form of $S_g$.  The last
$q-1$ components of \eqref{equ:block-canonical-lifting} are holomorphic,
whereas \eqref{equ:right-inverse-defect} gives
\[
 \bar\partial(g^*f_1)=-\beta_g^*f_1.
\]
Since $H_R$ is holomorphic, it follows that
\begin{equation}\label{equ:block-kernel-equation}
 \bar\partial_{S_g}u_R=\beta_g^*f_1.
\end{equation}

For fixed $g$ and $\varepsilon$, estimate
\eqref{equ:block-kernel-estimate} makes $(u_R)_{R\geqslant1}$ bounded in
the Hilbert space
\[
 L^2\bigl(D,S_g;e^{-\varphi-\psi_{g,\varepsilon}}\bigr).
\]
Along a sequence $R\to\infty$, it converges weakly to a section $u$.
Equation \eqref{equ:block-kernel-equation} passes to the limit in the sense
of distributions, and weak lower semi-continuity gives
\begin{equation*}
 \bar\partial_{S_g}u=\beta_g^*f_1,
 \qquad
 \int_D|u|^2e^{-\varphi-\psi_{g,\varepsilon}} dV
 \leqslant\frac1\varepsilon J_{\varphi,g,\varepsilon}(f_1).
\end{equation*}
By Proposition \ref{prop:dbar}, the function $f_1$ satisfies the scalar
classical Skoda estimate for every affine-radial row $g$ above and every
$\varepsilon>0$.  These are exactly the test rows used in the proof of
Theorem \ref{thm:main-inverse-general},
therefore $\varphi$ is plurisubharmonic on $D$.

It remains to prove the final equivalence.  If $D$ is pseudoconvex and
$\varphi$ is plurisubharmonic, Theorem \ref{thm:skoda-vector-bundle},
applied to the trivial bundles and with the canonical form $dz$ used to
trivialize $K_D$, gives Skoda divisibility for every $F$.  The
implication from (2) to (3) is immediate because the
weighted Bergman space is non-trivial, and (3) implies (1) by the first
part of the theorem.  This proves the equivalence.
\end{proof}

\section{Connection between the $L^2$ division and optimal $L^2$ extensions}\label{sec:optimal-extension}

In our previous work \cite{li-meng-ning-zhou2025arxiv}, we gave an alternative proof of Berndtsson’s complex Pr\'ekopa minimum principle using the equivalence between plurisubharmonicity and the strong $L^2$ division property. A similar argument shows that this principle also follows from Theorem \ref{thm:main-inverse-general}.

In this section, we show that the original Skoda $L^2$ division theorem yields a direct proof of an optimal $L^2$ extension theorem of Guan--Zhou \cite{guan-zhou2012, guan-zhou2014optimal},  
although not the full statement of latter. 
The readers may refer to \cite{zhou2015} for more details about optimal $L^2$ extensions. Our approach is stimulated by the work of Berndtsson--Lempert \cite{berndtsson-lempert2016} and Siu \cite{siu2011}. 
Throughout this section, we equip all trivial bundles with their standard Hermitian metrics.

\begin{theorem}\label{thm:optimal-extension}
Let $D\subset\mathbb C^n$ be a pseudoconvex domain,
$\varphi$ continuous, $1\leqslant r\leqslant n$.
Suppose that $s=(s_1,\ldots,s_r)\in\mathcal O(D)^{\oplus r}$ satisfies
\[
 |s|^2:=\sum_{j=1}^r|s_j|^2<1
\]
on $D$, and denote $S:=\{s_1=\cdots=s_r=0\}\neq\varnothing$. Suppose that
on $S$,
\[
d s_1\wedge\cdots\wedge ds_r\neq 0.
\]
Assume that, for every pseudoconvex domain $U\Subset D$ and every $k\geqslant 1$,
with $q=\binom{r+k-1}{k}$, every
$F\in\mathcal{O}(U)^{\oplus q}$ is Skoda divisible on $U$ with respect to $\varphi$.

Then every $f\in\mathcal O(S)$ satisfying
\[
 \int_S\frac{|f|^2e^{-\varphi}}
 {|ds_1\wedge\cdots\wedge ds_r|^2}dV_S<\infty,
\]
there exists $\widetilde f\in\mathcal O(D)$ such that
\begin{equation}\label{equ:ci-extension}
 \widetilde f|_S=f,\qquad
 \int_D|\widetilde f|^2e^{-\varphi}dV
 \leqslant\frac{\pi^r}{r!}
 \int_S\frac{|f|^2e^{-\varphi}}
 {|ds_1\wedge\cdots\wedge ds_r|^2}dV_S,
\end{equation}
where $dV$ and $d V_S$ are the Lebesgue measure on $D$ and $S$ respectively.
\end{theorem}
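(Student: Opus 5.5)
The plan is to produce the extension as one component of a Skoda division, in the spirit of Berndtsson--Lempert, and then to let the auxiliary parameters degenerate so that the Skoda factor concentrates on $S$. First, by exhaustion, it suffices to work on a pseudoconvex $U\Subset D$ and then pass to a weak limit. Since the bound in \eqref{equ:ci-extension} is independent of $U$, a weak $L^2$ limit of the extensions $\widetilde f_U$ along an exhaustion still restricts to $f$ on $S$, and lower semicontinuity preserves the estimate. On $U$ we fix some holomorphic extension $f_0\in\mathcal O(U)$ of $f$, which exists by Cartan's Theorem B. After shrinking $U$ slightly we may assume $f_0$ is bounded and that $ds_1\wedge\cdots\wedge ds_r\neq0$ on $\overline U\cap S$.

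The basic scalar construction uses the tuple $g=(s_1,\ldots,s_r,c)$ with a constant $c>0$. This tuple has no common zero, and $p=\min\{n,r\}=r$. Division gives $f_0=\sum_{j\le r}s_jh_j+c\,h_{r+1}$, and therefore $\widetilde f:=c\,h_{r+1}$ satisfies $\widetilde f|_S=f$. Using $|g|^2\le1+c^2$, we get
\[
\int_U|\widetilde f|^2e^{-\varphi}dV\leqslant c^2(1+c^2)^{r(1+\varepsilon)}\Bigl(1+\frac1\varepsilon\Bigr)\int_U\frac{|f_0|^2e^{-\varphi}}{(|s|^2+c^2)^{r(1+\varepsilon)+1}}dV .
\]
Near $S$ we use $s$ as normal coordinates; this is legitimate because of the nondegeneracy of $ds$. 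The fibre integral $\int_{\mathbb C^r}(|w|^2+c^2)^{-r(1+\varepsilon)-1}dV_w$ is an explicit Beta integral, equal to $\pi^r c^{-2r\varepsilon-2}\Gamma(r\varepsilon+1)/\Gamma(r\varepsilon+r+1)$. Combined with continuity of $\varphi$ and an approximate-identity argument as in Proposition \ref{rmk:r-generators}, this shows that, as $c\to0$, the right-hand side behaves like
\[
c^{-2r\varepsilon}\Bigl(1+\frac1\varepsilon\Bigr)\frac{\pi^r\Gamma(r\varepsilon+1)}{\Gamma(r\varepsilon+r+1)}\int_S\frac{|f|^2e^{-\varphi}}{|ds|^2}dV_S .
\]
Here the contribution away from $S$ is killed by the relative smallness of the weight, exactly as in \eqref{equ:exponent}.

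The main obstacle is the constant in this expression. If $\varepsilon$ stays fixed, the factor $c^{-2r\varepsilon}$ blows up as $c\to0$. If $\varepsilon\to0$, the factor $(1+1/\varepsilon)$ blows up. So the scalar test alone does not reach $\pi^r/r!$, and this is exactly why the hypothesis is formulated for all $k$ with $q=\binom{r+k-1}{k}$. My plan is to replace $g$ by its $k$-th symmetric power: take the holomorphic surjection $G_k$ induced by $\operatorname{Sym}^k$ of $(s,c)$ onto the $q$-dimensional space indexed by degree-$k$ monomials in $s$, and apply the higher-rank hypothesis to the vector $F=(c^{k-|\alpha|}s^\alpha f_0)$ or to a similar datum built from $f_0$, so that one component of $H$ again gives an extension of $f$. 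Since $\det(G_kG_k^\star)$ is a power of $|s|^2+c^2$, the Skoda factor becomes $(|s|^2+c^2)^{-N_k(1+\varepsilon)}$ with $N_k$ growing in $k$. The goal is to take $\varepsilon=\varepsilon_k$ with $N_k\varepsilon_k$ bounded but $\varepsilon_k\to\infty$ relative to the effective exponent, so that both $(1+1/\varepsilon)$ and $c^{-2N_k\varepsilon}$ stay under control. The Gamma-function asymptotics should then converge, as $k\to\infty$ and then $c\to0$, to $\pi^r/r!$. The bookkeeping of $G_k$, of $(G_kG_k^\star)^{-1}$, and of these Gamma limits is where I expect most of the difficulty, and I would first check the case $r=1$ by hand to calibrate the choice of $\varepsilon_k$.

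Finally, once a family of extensions with norms bounded by the limiting constant times $\int_S|f|^2e^{-\varphi}|ds|^{-2}dV_S$ is obtained, we extract a weakly convergent subsequence in $A^2(U,\varphi)$. The restriction to $S$ is preserved under weak limits, because point evaluations are continuous on Bergman spaces with continuous weights, and the norm bound passes to the limit. Letting $U\uparrow D$ then finishes the proof.
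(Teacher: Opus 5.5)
You have a genuine gap. Your scalar step is correct as far as it goes. It is even admissible under the hypothesis, which is stated only for target rank $q=\binom{r+k-1}{k}$: padding $g$ with an identity block and $f_0$ with zeros reduces that rank to one. But the step that is supposed to reach $\pi^r/r!$ is never carried out, and as described it cannot work.

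The loss $c^{-2r\varepsilon}$ you found is structural for any scheme that divides once and then passes to the unweighted norm via $\int_U|\widetilde f|^2e^{-\varphi}dV\leqslant\sup_U\bigl(|L|^2W^{-1}\bigr)\int_U|H|^2We^{-\varphi}dV$. Here $W=\det(GG^\star)^{-p(1+\varepsilon)}$ and $L$ is the coefficient extracting $\widetilde f$ from $H$. The right-hand side localizes on $S$ only if $W$ is much larger on a thin tube around $S$ than on the rest of $U$. You then pay $\sup W/\inf W$, while localization only gains back the volume of the tube.

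Concretely, take Skoda factor $(|s|^2+c^2)^{-N_k(1+\varepsilon)}$ and data homogeneous in $(s,c)$. The substitution $w=c\zeta$ in the normal integral gives the prefactor $c^{2r-2N_k(1+\varepsilon)}$; in your scalar computation the $c^2$ from $L$ cancels the $c^{-2}$ from $|g|^{-2}$, and the same cancellation happens here. This prefactor blows up as $c\to0$ once $N_k(1+\varepsilon)>r$, so a growing exponent $N_k$ makes things worse, not better. Already in the scalar case, keeping $c^{-2r\varepsilon}$ bounded as $c\to0$ forces $\varepsilon\lesssim1/\log(1/c)$, hence $1+1/\varepsilon\to\infty$. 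Your requirement that $N_k\varepsilon_k$ stay bounded while $\varepsilon_k\to\infty$ is contradictory once $N_k\to\infty$. Conversely, if you choose parameters with a bounded constant (for instance $c\to\infty$ with $\varepsilon=\lambda c^2$, so the weight tends to a constant multiple of $e^{-r\lambda|s|^2}$), concentration is lost. Then $J$ depends on the arbitrary extension $f_0$, not on $\int_S|f|^2e^{-\varphi}|ds_1\wedge\cdots\wedge ds_r|^{-2}dV_S$.

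The missing ingredient is a Berndtsson--Lempert variational argument; the paper never extracts the extension from a division. It applies Skoda divisibility to $G_\delta=\bigl[\operatorname{Id}_q,\ \delta(\sqrt{k!/\alpha!}\,s^\alpha)_{|\alpha|=k}\bigr]$ and $F=(\sqrt{k!/\alpha!}\,s^\alpha h)_{|\alpha|=k}$. This is where the degree-$k$ monomials and $q=\binom{r+k-1}{k}$ really enter, via $|s|^{2k}=\sum_{|\alpha|=k}\frac{k!}{\alpha!}|s^\alpha|^2$. For this choice $p=1$ and $\det(G_\delta G_\delta^\star)=1+\delta^2|s|^{2k}$, and one takes $\varepsilon=c/\delta^2\to\infty$ as $\delta\to0$. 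Then $1+1/\varepsilon\to1$ and the Skoda weight tends to $e^{-c|s|^{2k}}$, with no concentration at all.

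The output is an approximation inequality: $\||s|^{2k}h-v\|_c^2\leqslant\frac1c\int_U|s|^{2k}|h|^2e^{-\varphi-c|s|^{2k}}dV$ with $v$ holomorphic. Let $M(c)$ be the minimum of $\|h\|_c^2$ under $\ell(h)=1$, for $\ell$ a finite combination of point evaluations on $S$. The inequality yields concavity of $x\mapsto\log M(e^{-x})$. Together with $M(c)\geqslant C^{-1}c^{-r/k}$, this makes $c^{r/k}M(c)$ nondecreasing in $c$.

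Concentration is used only for the fixed competitor $f_0$ as $c\to\infty$, where $c^{r/k}\|f_0\|_c^2\to\frac{\pi^r}{r!}\Gamma(1+\frac rk)\int_S\frac{|f|^2e^{-\varphi}}{|ds_1\wedge\cdots\wedge ds_r|^2}dV_S$. Monotonicity and duality transfer this bound to the fixed value $c_0=r/k$, where passing to the unweighted norm costs only $e^{c_0}$. The resulting constant is $\frac{\pi^r}{r!}\Gamma(1+\frac rk)(\frac{ek}{r})^{r/k}$, which tends to $\frac{\pi^r}{r!}$ as $k\to\infty$. This monotonicity in $c$ lets you localize at large $c$ while converting at a mild weight, and a single division cannot provide it.
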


For an integer $k\geqslant1$ and $c>0$, set
\[
 \Phi_c=\varphi+c|s|^{2k},\qquad
 \llangle u,v\rrangle_c=\int_Uu\overline v e^{-\Phi_c}dV,
 \qquad \|u\|_c^2=\llangle u,u\rrangle_c.
\]
Since $|s|<1$,
the norms $\|\cdot\|_c$ and $\|\cdot\|_\varphi$ are equivalent,
and the corresponding weighted Bergman spaces coincide.

The key step to use Skoda divisibility is the following approximation result.
\begin{proposition}\label{prop:ci-approximation}
For every integer $k\geqslant1$, every $c>0$, and every
$h\in A^2(U,\varphi)$, there exists $v\in A^2(U,\varphi)$ such that
\begin{equation}\label{equ:ci-approximation}
 \big\||s|^{2k}h-v\big\|_c^2
 \leqslant\frac1c\int_U|s|^{2k}|h|^2e^{-\Phi_c}dV.
\end{equation}
\end{proposition}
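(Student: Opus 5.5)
The plan is to take $v$ to be the orthogonal projection of $|s|^{2k}h$ onto $A^2(U,\varphi)$ in the inner product $\llangle\cdot,\cdot\rrangle_c$. This is legitimate because $\|\cdot\|_c$ and $\|\cdot\|_\varphi$ are equivalent, so $A^2(U,\varphi)$ is closed for $\|\cdot\|_c$. Put $w=|s|^{2k}h-v$, so that $w\perp_c A^2(U,\varphi)$ and
\[
 \|w\|_c^2=\llangle |s|^{2k}h,w\rrangle_c .
\]
Expand $|s|^{2k}=\sum_{|\alpha|=k}\frac{k!}{\alpha!}|s^\alpha|^2$; there are exactly $q=\binom{r+k-1}{k}$ multi-indices. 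Write $S_k=(\sqrt{k!/\alpha!}\,s^\alpha)_{|\alpha|=k}$, so that $|S_k|^2=|s|^{2k}$. Then
\[
 \|w\|_c^2=\sum_\alpha \llangle \sqrt{k!/\alpha!}\,s^\alpha h,\ \sqrt{k!/\alpha!}\,s^\alpha w\rrangle_c ,
\]
and Cauchy--Schwarz gives
\[
 \|w\|_c^2\leqslant\Big(\int_U|s|^{2k}|h|^2e^{-\Phi_c}dV\Big)^{1/2}\Big(\int_U|s|^{2k}|w|^2e^{-\Phi_c}dV\Big)^{1/2}.
\]
Hence \eqref{equ:ci-approximation} reduces to the weighted Poincar\'e-type inequality
\begin{equation*}
 \int_U|s|^{2k}|w|^2e^{-\Phi_c}dV\leqslant\frac1c\,\|w\|_c^2
 \qquad\text{for every } w\perp_c A^2(U,\varphi).
\end{equation*}

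To prove this inequality I would dualize, using the vector Skoda divisibility of Theorem \ref{thm:flat-vector-converse} type on $U$ with $q$ components. The orthogonality $w\perp_c A^2$ says that $we^{-c|s|^{2k}}$ annihilates every holomorphic function in the $\varphi$-pairing. In particular it annihilates every sum $\sum_\alpha \sqrt{k!/\alpha!}\,s^\alpha H_\alpha$ with $H_\alpha$ holomorphic, i.e. every $S_k\cdot H$.

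The vector $(\sqrt{k!/\alpha!}\,\bar s^\alpha w)_\alpha$ is then compared with holomorphic liftings. For a holomorphic $q$-tuple $F$, I would apply Skoda divisibility to a surjective matrix of the form $G=[\,\mathrm{Id}_{q}\ \ \delta\,\cdot\,]$. Its extra columns, of size $\delta$, are built from the row $S_k$, so that $GG^\star=\mathrm{Id}+\delta^2(\cdot)$ and the $\det(GG^\star)$ factor is a controlled perturbation of the constant $1$. The resulting division estimate, with the factor $(1+1/\varepsilon)$ and $\varepsilon$ sent to $\infty$, or alternatively combined with the scaling $c\mapsto$ weight $e^{-c|s|^{2k}}$, should yield a $\bar\partial$-type estimate. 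Via Proposition \ref{prop:dbar} this is the solvability of $\bar\partial$ with the sharp constant $1/c$ coming from $i\partial\bar\partial(c|s|^{2k})\geqslant c\, i\,\partial S_k\wedge\overline{\partial S_k}$, in the style of Berndtsson--Lempert. By Riesz duality (as in the proof of Proposition \ref{prop:trace}, where a division estimate was paired against a compactly supported form), this $\bar\partial$-estimate is equivalent to the displayed inequality for $w$ orthogonal to $A^2$.

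The main obstacle is precisely this middle step. One must choose the test matrix $G$ (built from $S_k$ and constants) so that the Skoda estimate, which a priori involves the weight $\psi_{G,\varepsilon}$ and the constant $1+1/\varepsilon$, degenerates in a limit to exactly the constant $1/c$ with weight $\Phi_c$. I would first try a block matrix as in \eqref{equ:block-matrix}: put $R\,\mathrm{Id}$ in the large blocks, let $R\to\infty$ to kill the constant $\det$-contributions, and take weak limits of the kernel components exactly as was done in the proof of Theorem \ref{thm:flat-vector-converse}.
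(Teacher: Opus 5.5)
There is a genuine gap, and it is in your reduction step. Choosing $v$ as the $\llangle\cdot,\cdot\rrangle_c$-projection is harmless, since it is the optimal choice. But the inequality $\int_U|s|^{2k}|w|^2e^{-\Phi_c}dV\leqslant\frac1c\|w\|_c^2$ for all $w\perp_c A^2(U,\varphi)$ is false as soon as $c\sup_U|s|^{2k}>1$. That is exactly the non-trivial regime: when $c\sup_U|s|^{2k}\leqslant1$, the proposition already holds with $v=0$.

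Here is a counterexample. Pick a ball $B\Subset U$ on which $c|s|^{2k}>1$, a non-zero $\chi\in C_c^\infty(B)$, and set $w=e^{\Phi_c}\partial\chi/\partial z_1$. For every $g\in A^2(U,\varphi)$, integration by parts gives $\llangle g,w\rrangle_c=\int_U g\,\partial\bar\chi/\partial\bar z_1\,dV=0$. Yet $\int_U|s|^{2k}|w|^2e^{-\Phi_c}dV\geqslant(\inf_B|s|^{2k})\|w\|_c^2>\frac1c\|w\|_c^2$.

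The loss happens in your Cauchy--Schwarz step. It treats $|s|^kh$ as an arbitrary $L^2$ function, so the sufficient condition you reduce to is equivalent to the proposition for all $h\in L^2$. The proposition is only true because $h$ is holomorphic, so that $F=S_kh$ is a holomorphic tuple to which divisibility applies. Consequently no $\bar\partial$-estimate can be equivalent to your displayed inequality. Note also that the H\"ormander/Berndtsson--Lempert heuristic via $i\partial\bar\partial(c|s|^{2k})$ is unavailable here: $\varphi$ is not assumed plurisubharmonic, and only Skoda divisibility may be used.

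The middle step is also left open, and the limits you suggest do not produce the weight $e^{-c|s|^{2k}}$. Letting $R\to\infty$ in a block matrix only removes constant determinant factors. Letting $\varepsilon\to\infty$ with the matrix fixed makes $\det(GG^\star)^{-(1+\varepsilon)}$ concentrate on $\{s=0\}$. The missing idea is to couple the two parameters and dispense with duality entirely.

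Take $G_\delta=[\operatorname{Id}_q,\ \delta S_k]$ and $F=S_kh$. Then $m=q+1$, $p=1$ and $\det(G_\delta G_\delta^\star)=1+\delta^2|s|^{2k}$; choose $\varepsilon=c/\delta^2$. Divisibility gives $H=(h_1,\ldots,h_{q+1})$ whose first $q$ entries are forced to equal $S_k(h-\delta h_{q+1})$. Completing the square,
\[
 |H|^2=\frac{|s|^{2k}|h|^2}{1+\delta^2|s|^{2k}}+\delta^2(1+\delta^2|s|^{2k})\Bigl|\frac{h_{q+1}}{\delta}-\frac{|s|^{2k}h}{1+\delta^2|s|^{2k}}\Bigr|^2 .
\]
The first term integrates to exactly $J_{\varphi,G_\delta,\varepsilon}(F)$. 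So the division estimate leaves only $\frac1\varepsilon J=\frac{\delta^2}{c}J$ for the second term. Dividing by $\delta^2$ gives the constant $1/c$ for $h_{q+1}/\delta$, against the weight $(1+\delta^2|s|^{2k})^{-c/\delta^2}e^{-\varphi}$. This weight is $\geqslant e^{-\Phi_c}$ and tends to $e^{-\Phi_c}$ as $\delta\to0$. A weak limit $v$ of $h_{q+1}/\delta$ then satisfies the proposition directly.
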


\begin{proof}
For a multi-index
$\alpha=(\alpha_1,\ldots,\alpha_r)\in\mathbb{Z}_{\geqslant0}^r$, write
$\alpha!=\alpha_1!\cdots\alpha_r!$,
$s^\alpha=s_1^{\alpha_1}\cdots s_r^{\alpha_r}$ and
$q=\binom{r+k-1}{k}$. For $\delta>0$ and
$h\in A^2(U,\varphi)$, set
\begin{equation*}
 G_\delta=
 \left[\operatorname{Id}_{\underline{\mathbb C}^{q}},
       \ \delta\left(\sqrt{\frac{k!}{\alpha!}}s^\alpha
                       \right)_{|\alpha|=k}\right],
 \qquad
 F=\left(\sqrt{\frac{k!}{\alpha!}}s^\alpha h
                       \right)_{|\alpha|=k},
\end{equation*}
where the last column of $G_{\delta}$ is indexed in the same order as the components of
$F$. It follows from
\[
 \sum_{|\alpha|=k}\frac{k!}{\alpha!}|s^\alpha|^2=|s|^{2k},
\]
that $G_\delta$ is pointwise surjective,
\begin{equation*}
 \det(G_\delta G_\delta^\star)=1+\delta^2|s|^{2k},\qquad
 (G_\delta G_\delta^\star)^{-1}F
       =\frac{F}{1+\delta^2|s|^{2k}},
\end{equation*}
and
\[
 J_{\varphi,G_\delta,\varepsilon}(F)
 =\int_U|s|^{2k}|h|^2
       (1+\delta^2|s|^{2k})^{-2-\varepsilon}e^{-\varphi}dV.
\]
 For $\delta>0$, it follows from Skoda divisibility that, with
 $\varepsilon=c/\delta^2$, there exists $H=(h_1,\ldots,h_{q+1})\in\mathcal{O}
 (U)^{\oplus (q+1)}$ such that $G_\delta H=F$ and
 \begin{equation}\label{equ:ext-division}
         \int_U |H|^2(1+\delta^2 |s|^{2k})^{-1-\varepsilon}e^{-\varphi}d V
    \leqslant (1+\frac{1}{\varepsilon})J_{\varphi,G_{\delta},\varepsilon}(F).
 \end{equation}

The equation
$G_\delta H=F$ identifies the first $q$ components of $H$ with
\[
 \left(\sqrt{\frac{k!}{\alpha!}}s^\alpha
              (h-\delta h_{q+1})\right)_{|\alpha|=k}.
\]
Therefore, one has
\begin{align*}
 |H|^2
 &=|s|^{2k}|h-\delta h_{q+1}|^2+|h_{q+1}|^2\\
 &=\frac{|s|^{2k}|h|^2}{1+\delta^2|s|^{2k}}
   +\delta^2(1+\delta^2|s|^{2k})
      \left|\frac{h_{q+1}}\delta
             -\frac{|s|^{2k}h}{1+\delta^2|s|^{2k}}\right|^2.
\end{align*}
It then follows from \eqref{equ:ext-division} that
\begin{align}
 &\int_U\left|\frac{h_{q+1}}\delta
              -\frac{|s|^{2k}h}{1+\delta^2|s|^{2k}}\right|^2
        (1+\delta^2|s|^{2k})^{-\varepsilon}e^{-\varphi}dV\notag\\
 &\qquad\leqslant\frac1c\int_U|s|^{2k}|h|^2
        (1+\delta^2|s|^{2k})^{-2-\varepsilon}e^{-\varphi}dV.
 \label{equ:ci-division-limit}
\end{align}
 Since $\log(1+t)\leqslant t$ for $t\geqslant0$,
\[
 (1+\delta^2|s|^{2k})^{-\varepsilon}=(1+\delta^2|s|^{2k})^{-c/\delta^2}
 \geqslant e^{-c|s|^{2k}},
\]
and  $\lim_{\delta\to 0}(1+\delta^2|s|^{2k})^{-\varepsilon}
       = e^{-c|s|^{2k}},$
it follows from the dominated convergence theorem and \eqref{equ:ci-division-limit} that $h_{q+1}/\delta$ is bounded in
$A^2(U,\Phi_c)$. Therefore we can find a subsequence of $\{h_{q+1}/\delta\}$ converging to $v\in A^2(U,\varphi)$ satisfying \eqref{equ:ci-approximation}.
\end{proof}
With the above approximation result, we have the following property, which is a counterpart of the variation of domains used in \cite{berndtsson-lempert2016}.
\begin{proposition}\label{prop:ci-minimum-concavity}
Let $\ell$ be a non-zero finite
linear combination of point evaluations at points of $S\cap U$, i.e., for some $N\geqslant1$,
\[
\ell=\sum_{j=1}^{N}\lambda_j \operatorname{EV}_{z_j}, \quad z_j\in S\cap U.
\]
We regard $\ell$ as a continuous linear functional on $A^2(U,\varphi)$, put
\begin{equation*}
 M(c)=\min\left\{\|h\|_c^2:
                 h\in A^2(U,\varphi),\ \ell(h)=1\right\},
\end{equation*}
and denote the unique minimizer by $f_c$.
Then
\begin{enumerate}
    \item The maps $c\mapsto f_c$ and $c\mapsto M(c)$ are smooth on $(0,\infty)$.
    \item The function
\[
 x\longmapsto\log M(e^{-x})
\]
is concave on $\mathbb R$.
\end{enumerate}
\end{proposition}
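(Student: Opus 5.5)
The strategy is the Berndtsson--Lempert variational argument. Fix $\ell$ and work in the fixed Hilbert space $\mathcal H=A^2(U,\varphi)$. The norms $\|\cdot\|_c$ are the quadratic forms $\|h\|_c^2=\langle T_ch,h\rangle_\varphi$, where $T_c$ is the Toeplitz operator with symbol $e^{-c|s|^{2k}}$. Since $0\leqslant |s|^{2k}<1$, the map $c\mapsto T_c$ is real-analytic in operator norm: its power series in $c$ converges in norm, the symbol derivatives $(-|s|^{2k})^je^{-c|s|^{2k}}$ are uniformly bounded, and $T_c$ is uniformly invertible on compact $c$-sets. Since $\ell$ is a finite combination of point evaluations, it is continuous on $\mathcal H$, so $\ell=\langle\cdot,\ell^\sharp\rangle_\varphi$ for some $\ell^\sharp\in\mathcal H$ (nonzero since $\ell\neq0$ and point evaluations at distinct points are independent). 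A Lagrange-multiplier computation then gives
\[
f_c=\frac{T_c^{-1}\ell^\sharp}{\langle T_c^{-1}\ell^\sharp,\ell^\sharp\rangle_\varphi},\qquad M(c)=\frac{1}{\langle T_c^{-1}\ell^\sharp,\ell^\sharp\rangle_\varphi}.
\]
Both expressions are smooth (indeed real-analytic) in $c$, which proves (1). Uniqueness of the minimizer follows from strict convexity of the norm on the affine hyperplane $\{\ell=1\}$.

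For (2), I would compute $M'$ and $M''$. By the envelope principle, $M'(c)=-A(c)$, where
\[
A(c)=\int_U|s|^{2k}|f_c|^2e^{-\Phi_c}\,dV.
\]
Differentiating once more, using that $\dot f_c$ lies in $\ker\ell$ and that $f_c$ is $\|\cdot\|_c$-orthogonal to $\ker\ell$, I expect
\[
M''(c)=\int_U|s|^{4k}|f_c|^2e^{-\Phi_c}\,dV-2\big\|\Pi_c(|s|^{2k}f_c)\big\|_c^2 .
\]
Here $\Pi_c$ is the $\|\cdot\|_c$-orthogonal projection onto $\ker\ell\subset\mathcal H$; note that $|s|^{2k}f_c$ itself is not in $\mathcal H$. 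With $c=e^{-x}$ and $m(x)=M(e^{-x})$, we have $m'=cA$ and $m''=c^2M''-cA$. Concavity of $\log m$ is equivalent to $mm''\leqslant(m')^2$, that is,
\[
M\big(c^2M''-cA\big)\leqslant c^2A^2 .
\]

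The key input is Proposition \ref{prop:ci-approximation} applied with $h=f_c$. It yields a holomorphic $v$ with
\[
\big\||s|^{2k}f_c-v\big\|_c^2\leqslant \frac{A}{c},
\]
so the distance from $|s|^{2k}f_c$ to $\mathcal H$ is at most $\sqrt{A/c}$. Decompose the projection onto $\mathcal H$ as the sum of $\Pi_c$ and the component along $f_c$. The latter is $\frac{\langle|s|^{2k}f_c,f_c\rangle_c}{\|f_c\|_c^2}f_c=\frac{A}{M}f_c$, whose squared norm is $A^2/M$. By Pythagoras,
\[
\big\|\Pi_c(|s|^{2k}f_c)\big\|_c^2\geqslant\int_U|s|^{4k}|f_c|^2e^{-\Phi_c}\,dV-\frac{A}{c}-\frac{A^2}{M}.
\]
Substituting this into $M''$ gives an upper bound for $c^2M''$ in terms of $\int|s|^{4k}|f_c|^2e^{-\Phi_c}$, $cA$ and $c^2A^2/M$. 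This should give $mm''\leqslant (m')^2$.

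The main obstacle is this last bookkeeping step: the exact coefficient of the projection term in $M''$ and the resulting constants must match the gain $1/c$ from Proposition \ref{prop:ci-approximation}. The leftover $\int|s|^{4k}|f_c|^2e^{-\Phi_c}$ term must have the right sign (Cauchy--Schwarz gives $A^2\leqslant M\int|s|^{4k}|f_c|^2e^{-\Phi_c}$). If the naive constants do not close, I would first recheck the formula for $M''$ by writing $M=1/\langle T_c^{-1}\ell^\sharp,\ell^\sharp\rangle$ and differentiating the resolvent directly. Failing that, I would follow Berndtsson--Lempert and verify concavity through the dual norm $\|\ell\|_c^{*2}=1/M(c)$, expressed as $\sup_h|\ell(h)|^2/\|h\|_c^2$. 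In that form one tests with the competitor $h=f_{c}-t\big(v-\ell(v)f_{c}\big)$, where $v$ is the approximant from Proposition \ref{prop:ci-approximation}, and this yields the needed second-order inequality at each $c$.
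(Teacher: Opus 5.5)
Your setup is the paper's: smoothness via the Toeplitz operator $T_c$ and the Riesz representative, $M'=-A$, and $M''=B-2\|\Pi_c(|s|^{2k}f_c)\|_c^2$ with $B=\int_U|s|^{4k}|f_c|^2e^{-\Phi_c}dV$. That formula is correct, since the paper's $f_c'$ is exactly $\Pi_c(|s|^{2k}f_c)$. The gap is in the final step: it does not close the way you propose. Substituting your lower bound $\|\Pi_c(|s|^{2k}f_c)\|_c^2\geqslant B-A/c-A^2/M$ into $M''$ gives $c^2M''-cA\leqslant -c^2B+cA+2c^2A^2/M$, hence
\[
M\bigl(c^2M''-cA\bigr)\leqslant c^2A^2+cAM-c^2\bigl(BM-A^2\bigr).
\]
Cauchy--Schwarz only disposes of the last term, leaving an uncontrolled excess $cAM>0$. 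The cause is that the projection term enters $M''$ with coefficient $-2$ while $B$ enters with $+1$. Bounding the projection term from below therefore spends the approximation estimate twice. Writing $d^2:=\operatorname{dist}_c(|s|^{2k}f_c,A^2(U,\varphi))^2$, this in effect replaces $d^2$ by $2A/c-d^2\geqslant A/c$, and the $-cA$ coming from $c=e^{-x}$ can absorb only one copy of $A/c$. Rechecking $M''$ through the resolvent will not help, because the formula is right. The dual-norm competitor fallback is only sketched, and it needs the same exact expansion of $B$ below to avoid the same loss.

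The fix is to use Pythagoras as an identity and eliminate $B$ rather than the projection term. Orthogonally decompose $|s|^{2k}f_c$ into its $\ker\ell$-part, its $f_c$-part $\frac{A}{M}f_c$, and its part orthogonal to $A^2(U,\varphi)$. This gives $B=\|\Pi_c(|s|^{2k}f_c)\|_c^2+A^2/M+d^2$, so
\[
M''-\frac{A^2}{M}=d^2-\bigl\|\Pi_c(|s|^{2k}f_c)\bigr\|_c^2 .
\]
Now apply Proposition \ref{prop:ci-approximation} only once, in the form $d^2\leqslant A/c$. This gives $c^2M''-cA\leqslant c^2A^2/M-c^2\|\Pi_c(|s|^{2k}f_c)\|_c^2\leqslant c^2A^2/M$, which is exactly $mm''\leqslant(m')^2$. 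This is the paper's argument. There $u_c=|s|^{2k}f_c-f_c'+\frac{M'(c)}{M(c)}f_c$ is the component orthogonal to $A^2$, which yields the identity $M''-M'^2/M=\|u_c\|_c^2-\|f_c'\|_c^2$, and the only estimate used is $\|u_c\|_c^2\leqslant -M'(c)/c$.
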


\begin{proof}
  1. As we have mentioned that $A^2(U,\varphi)$ coincides with $A^2(U,\Phi_c)$ in
    the sense of sets, we denote by $A^2$ for both $A^2(U,\varphi)$ and  $A^2(U,\Phi_c)$.
    Let $\| \cdot \|_0 := \| \cdot \|_{\varphi}$ be a fixed Hilbert norm on $A^2$, define the bounded operator $T_c$ on $A^2$ by
    \[
    \llangle T_c F,G \rrangle_0=\llangle F,G \rrangle_c, \quad F,G\in A^2.
    \]
    It follows that $e^{-c}\operatorname{Id}\leqslant  T_c\leqslant \operatorname{Id}$. We denote $\|\cdot\|_{\operatorname{op}}$ as the operator norm. Let $P_\varphi:L^2(U,e^{-\varphi})\to A^2$ be the orthogonal projection. Then
    $T_cF=P_\varphi(e^{-c|s|^{2k}}F)$. Define
    $T_c'F=-P_\varphi(|s|^{2k}e^{-c|s|^{2k}}F)$.
    Since $|s|\leqslant1$ and $\|P_\varphi\|\leqslant1$, $T_c'$ is a bounded operator on $A^2$.
    Since for $\lambda\ne0$ with $c+\lambda>0$,
    \[
        \left\| \frac{T_{c+\lambda} - T_c}{\lambda} - T'_c \right\|_{\operatorname{op}} \leqslant \sup_{\|F\|_0=1} \left\| \left( \frac{e^{-(c+\lambda)|s|^{2k}} - e^{-c|s|^{2k}}}{\lambda} + |s|^{2k} e^{-c|s|^{2k}} \right) F \right\|_0,
    \]
    and
    \[
    \left| \frac{e^{-(c+\lambda)|s|^{2k}} - e^{-c|s|^{2k}}}{\lambda} + |s|^{2k} e^{-c|s|^{2k}} \right| \leqslant |\lambda|,
    \]
    we have
    \[
    \lim_{\lambda\to 0}\left\| \frac{T_{c+\lambda} - T_c}{\lambda} - T'_c \right\|_{\operatorname{op}}=0,
    \]
    that is $T_c$ is differentiable. Therefore, $T_c$ is smooth by induction.

    Let $r_0$ be the Riesz representative of $\ell$ for $\llangle\cdot,\cdot\rrangle_0$, its representative for $\llangle\cdot,\cdot\rrangle_c$ is $r_c=T^{-1}_c r_0$, hence
    \[
    f_c=\frac{r_c}{\ell (r_c)},\quad \ell(r_c)=\|r_c\|_c^2,
    \]
    it follows that $c\mapsto f_c$ and $M(c)$ are smooth on $(0,\infty)$.

  2. We write  $f'_c$ as $d f_c/d c$. Since $\ell (f_c)=1$, we have $f'_c\in\ker\ell$ and
    \begin{equation}\label{equ:ci-minimizer-derivative}
    \llangle f'_c,h\rrangle_c=\llangle |s|^{2k}f_c,h\rrangle_c,
    \end{equation}
    where $h\in \ker \ell$.
    In particular, $\llangle |s|^{2k}f_c,f'_c\rrangle_c=\|f'_c\|^2_c$. Therefore, it follows from differentiating $M(c)=\|f_c\|^2_c$ that
    \begin{equation}\label{equ:ci-minimum-derivatives}
    \begin{aligned}
        M'(c)&=-\int_U|s|^{2k}|f_c|^2e^{-\Phi_c}dV,\\
        M''(c)&=\big\||s|^{2k}f_c\big\|_c^2
                         -2\|f_c'\|_c^2.
 \end{aligned}
\end{equation}
Set
\[
 u_c=|s|^{2k}f_c-f_c'+\frac{M'(c)}{M(c)}f_c.
\]
By \eqref{equ:ci-minimizer-derivative}, $u_c$ is orthogonal to
$\ker\ell$. The first identity in
\eqref{equ:ci-minimum-derivatives} shows that it is also orthogonal
to $f_c$, hence to all of $A^2$. That is, the decomposition
\[
|s|^{2k}f_c=f'_c-\frac{M'(c)}{M(c)}f_c+u_c
\]
is orthogonal and
\[
\| |s|^{2k}f_c \|_c^2=\| f'_c\|_c^2+\frac{M'(c)^2}{M(c)}+\| u_c \|_c^2.
\]
Consequently,
\[
 M''(c)-\frac{M'(c)^2}{M(c)}
       =\|u_c\|_c^2-\|f_c'\|_c^2.
\]
Since for any $\tilde{h}\in A^2$, we have $\| |s|^{2k}f_c-\tilde{h} \|_c^2\geqslant \|u_c\|_c^2$.
Apply
Proposition \ref{prop:ci-approximation} to $f_c$, we have
\[
\| u_c \|_c^2\leqslant -\frac{M'(c)}{c}.
\]
For $c=e^{-x}$, it follows from the above estimate that
\begin{align*}
 \frac{d^2}{dx^2}\log M(e^{-x})
 &=\frac{cM'(c)}{M(c)}
   +\frac{c^2}{M(c)}
          \bigl(\|u_c\|_c^2-\|f_c'\|_c^2\bigr)\\
 &\leqslant-\frac{c^2}{M(c)}\|f_c'\|_c^2\leqslant0.
\end{align*}
\end{proof}

\begin{proposition}\label{prop:ci-concentration}
Suppose in addition that $U$ is smoothly bounded and that
$\partial U$ meets $S$ transversely. Let $f_0$ be holomorphic on a
neighborhood of $\overline U$ with $f_0|_{S\cap U}=f$. Then
\begin{equation}\label{equ:ci-concentration}
 \lim_{c\to\infty}c^{r/k}\|f_0\|_c^2
 =\frac{\pi^r}{r!}\Gamma(1+\frac{r}{k})
   \int_{S\cap U}\frac{|f|^2e^{-\varphi}}
      {|ds_1\wedge\cdots\wedge ds_r|^2}dV_S.
\end{equation}
Moreover, for every non-zero finite $\ell=\sum_{j=1}^{N}\lambda_j \operatorname{EV}_{z_j}$ with $z_1,\cdots z_N\in S\cap U$ and $M(c)$ defined above, there
exists $C>0$ independent of $c$ such that, for all sufficiently
large $c$ and all $h\in A^2(U,\varphi)$,
\begin{equation*}
 |\ell(h)|^2\leqslant Cc^{r/k}\|h\|_c^2,
 \qquad M(c)\geqslant C^{-1}c^{-r/k}.
\end{equation*}
\end{proposition}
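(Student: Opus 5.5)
Both assertions of Proposition \ref{prop:ci-concentration} are a Laplace-type asymptotic in the normal variables $s=(s_1,\ldots,s_r)$ near $S$; the second part is deduced from the first by a reproducing-kernel argument.

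\textbf{The limit \eqref{equ:ci-concentration}.}
\begin{enumerate}[1.]
\item Split $U$ into a tubular neighbourhood $W_\eta=\{|s|<\eta\}\cap U$ and its complement. On $U\setminus W_\eta$ we have $e^{-c|s|^{2k}}\leqslant e^{-c\eta^{2k}}$. Hence that part is $O(e^{-c\eta^{2k}})$ and is killed by $c^{r/k}$.
\item Near a point of $S\cap\overline U$, use $w=(s_1,\ldots,s_r,t)$ as holomorphic coordinates. Here $t$ is a coordinate along $S$, which is possible since $ds_1\wedge\cdots\wedge ds_r\neq0$ on $S$. The Lebesgue measure then becomes $dV=|ds_1\wedge\cdots\wedge ds_r|^{-2}\,dV_s\,dV_S(t)\,(1+O(|s|))$. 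This is the coarea/Jacobian identity: it holds exactly on $S$ and continuously nearby, where the normalisation of $dV_S$ is the induced one.
\item Transversality of $\partial U$ and $S$ guarantees that, for small $\eta$, the slices $\{s=\sigma\}\cap U$ vary continuously in $\sigma$. Their volumes converge to that of $S\cap U$, and the boundary layer has measure $o(1)$.
\item Rescale $s=c^{-1/(2k)}\sigma$ and use the continuity of $|f_0|^2e^{-\varphi}$ together with dominated convergence. The integral reduces to the following computation, done in polar coordinates with $\int_{\mathbb C^r}$ radial:
\[
\int_{\mathbb C^r}e^{-|\sigma|^{2k}}dV_\sigma=\frac{\pi^r}{(r-1)!}\int_0^\infty \rho^{r-1}e^{-\rho^k}d\rho=\frac{\pi^r}{r!}\Gamma\Big(1+\frac rk\Big).
\]
\item A partition of unity subordinate to finitely many such charts, together with $f_0|_S=f$, gives \eqref{equ:ci-concentration}.
\end{enumerate}

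\textbf{The estimates on $\ell$ and $M(c)$.} It suffices to treat a single point evaluation $\operatorname{EV}_{z_j}$, since $|\ell(h)|^2\leqslant N\sum_j|\lambda_j|^2|h(z_j)|^2$.
\begin{enumerate}[1.]
\item Fix a polydisc around $z_j$ in the coordinates $(s,t)$, with $t$-radius $\tau$ fixed and $s$-radius $c^{-1/(2k)}$. On this polydisc $e^{-c|s|^{2k}}\geqslant e^{-r}$ (crude bound), and $\varphi$ and the Jacobian are bounded.
\item The sub-mean value property of $|h|^2$ on the polydisc then gives
\[
|h(z_j)|^2\leqslant \frac{C}{\mathrm{vol}}\int|h|^2e^{-\Phi_c}dV\leqslant C' c^{r/k}\|h\|_c^2,
\]
because the volume is comparable to $c^{-r/k}$.
\item For $M(c)$, any $h$ with $\ell(h)=1$ satisfies $1\leqslant Cc^{r/k}\|h\|_c^2$. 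Taking the infimum yields $M(c)\geqslant C^{-1}c^{-r/k}$.
\end{enumerate}

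The expected main obstacle is the uniformity near $\partial U$ in step 3 of the first part. One must justify that the slice integrals over $\{s=\sigma\}\cap U$ converge to the integral over $S\cap U$ with a uniform bound allowing dominated convergence. Transversality is what provides this. I would first straighten $\partial U$ near $S\cap\partial U$ by a smooth diffeomorphism preserving the fibres of $s$. Failing that, I would bound the symmetric difference of the slices by $O(|\sigma|)$ in measure.
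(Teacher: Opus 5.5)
Your proposal is correct and follows essentially the same route as the paper. Both arguments use a tubular cutoff $\{|s|<\delta\}$ with an exponentially small remainder, the coarea/fiber-integral identity in charts with $w=s(z)$ glued by a partition of unity, continuity of the fiber integral at $w=0$ from transversality, the rescaling $w=c^{-1/(2k)}\xi$, and the sub-mean-value inequality on a polydisc of normal radius $\sim c^{-1/(2k)}$ for the evaluation and $M(c)$ bounds. For the transversality step the paper does not straighten $\partial U$: it only uses that $\mathbb I_U(z_a(y,w))\to\mathbb I_U(z_a(y,0))$ for almost every $y$, together with dominated convergence.

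One small slip: with $s$-radius $c^{-1/(2k)}$ you only get $c|s|^{2k}\leqslant r^k$, so the lower bound is $e^{-r^k}$, not $e^{-r}$. The paper takes radius $r^{-1/2}c^{-1/(2k)}$ to get $e^{-1}$. This is harmless, since any $c$-independent constant suffices.
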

\begin{proof}
  1. Choose $\delta>0$ small enough such that
    \[
    V_\delta=\{z\in U : |s(z)|<\delta\}
    \]
    is a neighborhood of $S\cap U$ in $U$ and $s$ is a submersion on a neighborhood of $\overline{V_\delta}$.
    Cover $S\cap\overline U$ by finitely many coordinate neighborhoods with coordinates
    $(y,w)\in\mathbb C^{n-r}\times\mathbb C^r$, where $w=s(z)$.
    After shrinking $\delta$, choose a smooth partition of unity $(\rho_a)$
    on a neighborhood of $\overline{V_\delta}$ subordinate to these neighborhoods,
    with each $\rho_a$ compactly supported in its coordinate neighborhood.
    Denote the inverse coordinate maps by $z_a(y,w)$. Set
    \[
    J(w)=\int_{s^{-1}(w)\cap U}|f_0|^2e^{-\varphi}\frac{dV_{s^{-1}(w)}}{|ds_1\wedge\cdots\wedge ds_r|^2},
    \]
    and therefore
    \[
    \int_{U\cap V_\delta}|f_0|^2e^{-\Phi_c}\,dV=\int_{|w|<\delta}e^{-c|w|^{2k}}J(w)\,dV_w.
    \]
    Since $\partial U$ intersects $S$ transversely, in each coordinate neighborhood
    the set $\{y:z_a(y,0)\in\partial U\}$ has measure zero. Therefore
    $\mathbb I_U(z_a(y,w))\to\mathbb I_U(z_a(y,0))$ for almost every $y$ as $w\to0$.
    The coarea formula in these coordinates gives
    \begin{equation}\label{equ:slice-integral}
    \begin{aligned}
    J(w)&=\sum_a\int_{\mathbb C^{n-r}}\mathbb I_U(z_a(y,w))
    \rho_a(z_a(y,w))|f_0(z_a(y,w))|^2e^{-\varphi(z_a(y,w))}\\
    &\qquad\qquad\times
    \left|\det\frac{\partial z_a}{\partial(y,w)}\right|^2dV_y
    \longrightarrow J(0),
    \end{aligned}
    \end{equation}
    as $w\to0$, by the dominated convergence theorem applied in each chart.
    Here each integrand is extended by zero outside its coordinate neighborhood;
    the partition of unity gives fixed compact supports and uniform bounds.

    Now set $w=c^{-1/(2k)}\xi$ and thus
    \[
    \begin{split}
         &\lim_{c\to\infty}c^{r/k}\int_{U\cap V_\delta}|f_0|^2e^{-\Phi_c}\,dV\\
         =&\lim_{c\to\infty}\int_{|\zeta|<\delta c^{1/(2k)}}e^{-|\zeta|^{2k}}J(c^{-1/(2k)}\zeta)dV_\zeta\\
         =&J(0)\int_{\mathbb{C}^r}e^{-|\xi|^{2k}}d V_\xi\\
         =&J(0) \frac{2\pi^r}{(r-1)!}\int_0^\infty e^{-t^{2k}}t^{2r-1}\,dt\\
         =&\frac{\pi^r}{r!}\Gamma(1+r/k)\int_{S\cap U}\frac{|f|^2e^{-\varphi}}{|ds_1\wedge\cdots\wedge ds_r|^2}\,dV_S.
    \end{split}
    \]
    It then follows from \eqref{equ:slice-integral} that
    \[
    c^{r/k}\int_{U\cap V_\delta}|f_0|^2e^{-\Phi_c}\,dV\to J(0)\int_{\mathbb C^r}e^{-|\zeta|^{2k}}\,dV_\zeta.
    \]
    Since
    \[
    \|f_0\|_c^2=\int_{U\cap V_\delta}|f_0|^2e^{-\Phi_c}\,dV+\int_{U\setminus V_\delta}|f_0|^2e^{-\Phi_c}\,dV
    \]
    and on $U\setminus V_\delta$,
    \[
    c^{r/k}\int_{U\setminus V_\delta}|f_0|^2e^{-\Phi_c}\,dV\le c^{r/k}e^{-c\delta^{2k}}\int_U |f_0|^2e^{-\varphi}\,dV\to 0,
    \]
    as $c\to\infty$, we have \eqref{equ:ci-concentration} proved.

  2. To prove the evaluation estimate, fix $z_0\in S\cap U$. Choose a local coordinate neighborhood $(y,w)\in \Delta_R^{n-r}\times\Delta_\rho^r$ with $w=s(z)$ centered at $z_0$, where $\Delta_R=\{\xi\in\mathbb{C}:|\xi|<R\}$. For sufficiently large $c$, put
\[
 \rho_c=r^{-1/2}c^{-1/(2k)}\leqslant\rho.
\]
On $\Delta_R^{\,n-r}\times\Delta_{\rho_c}^{\,r}$ we have
\[
 |w|^2=\sum_{j=1}^r|w_j|^2
       \leqslant r\rho_c^2=c^{-1/k},
 \quad c|w|^{2k}\leqslant1,
 \quad e^{-c|w|^{2k}}\geqslant e^{-1}.
\]
For any $h\in A^2(U,\varphi)$, we have
\[
 \begin{aligned}
 |h(z_0)|^2
 &\leqslant
  \frac{1}{\pi^{n-r}R^{2(n-r)}\,\pi^r\rho_c^{2r}}
  \int_{\Delta_R^{\,n-r}\times\Delta_{\rho_c}^{\,r}}
                     |h(y,w)|^2dV_y\,dV_w\\
 &=\frac{r^r}{\pi^nR^{2(n-r)}}c^{r/k}
  \int_{\Delta_R^{\,n-r}\times\Delta_{\rho_c}^{\,r}}
                     |h(y,w)|^2dV_y\,dV_w,
 \end{aligned}
\]
Let $\det\frac{\partial(y,w)}{\partial z}$ denote the Jacobi determinant of the local coordinate $(y,w)$, we have
\[
\begin{split}
    |h(z_0)|^2\leqslant&\frac{r^r}{\pi^nR^{2(n-r)}}c^{r/k}
  \int_{\{z:(y(z),w(z))\in\Delta_R^{\,n-r}\times\Delta_{\rho_c}^{\,r}\}}
                     |h(z)|^2|\det\frac{\partial(y,w)}{\partial z}|^2d V_z\\
       \leqslant& \frac{r^r}{\pi^nR^{2(n-r)}}c^{r/k}\cdot\frac{e}{b}\|h\|^2_c:=C_{z_0}c^{r/k}\|h\|^2_c.
\end{split}
\]
where $b:=\min\{
       e^{-\varphi(z(y,w))}|\det\frac{\partial z}{\partial(y,w)}|^2:(y,w)\in\overline{\Delta_R^{\,n-r}\times\Delta_\rho^{\,r}}\}>0$.

For $\ell(h)=\sum_{j=1}^{N}\lambda_j h(z_j)$ with $N\geqslant1$ and $z_j\in S\cap U$, it follows that
\[
 \begin{aligned}
 |\ell(h)|^2
 &\leqslant\left(\sum_{j=1}^N|\lambda_j|\sqrt{C_{z_j}}\right)
       \left(\sum_{j=1}^N
                  \frac{|\lambda_j|}{\sqrt{C_{z_j}}}|h(z_j)|^2\right)\\
 &\leqslant\left(\sum_{j=1}^N|\lambda_j|\sqrt{C_{z_j}}\right)^2
                          c^{r/k}\|h\|_c^2,
 \end{aligned}
\]
and $M(c)\geqslant\left(\sum_{j=1}^N|\lambda_j|\sqrt{C_{z_j}}\right)^{-2}c^{-r/k}$ follows by choosing $h$ such that $\ell(h)=1$.
\end{proof}

\begin{proof}[Proof of Theorem \ref{thm:optimal-extension}]

Since $D$ is pseudoconvex, there exists $f_0\in\mathcal O(D)$ such that
$f_0|_S=f$. Choose an increasing
sequence of relatively compact pseudoconvex domains
$U_\nu$ with smooth boundaries exhausting $D$ , such that $\partial U_\nu$ meets $S$
transversely. Fix $U=U_\nu$ and $k\geqslant1$. Since $f_0$ and $\varphi$
are bounded on $\overline U$, we have $f_0\in A^2(U,\varphi)$.

Let $\ell$ be a non-zero finite linear combination of point
evaluations on $S\cap U$, and let $M(c)$ be its constrained
minimum. Proposition \ref{prop:ci-minimum-concavity} shows that
\[
 x\longmapsto\log M(e^{-x})-\frac rk x
\]
is concave, and Proposition \ref{prop:ci-concentration} bounds
it below as
$x\to-\infty$. It is therefore nonincreasing and thus for every $c_0>0$,
\begin{equation}\label{equ:ci-monotonicity}
 c_0^{r/k}M(c_0)
       \leqslant\liminf_{c\to\infty}c^{r/k}M(c).
\end{equation}
If $\ell(f_0)\ne0$, the function $f_0/\ell(f_0)$ is an admissible
competitor for $M(c)$. Combining \eqref{equ:ci-monotonicity} and
\eqref{equ:ci-concentration}, we obtain
\begin{equation}\label{equ:ci-functional-bound}
 |\ell(f_0)|^2M(c_0)
 \leqslant\frac{\pi^r}{r!}\Gamma(1+\frac{r}{k})c_0^{-r/k}
   \int_{S\cap U}\frac{|f|^2e^{-\varphi}}
      {|ds_1\wedge\cdots\wedge ds_r|^2}dV_S,
\end{equation}
the same inequality is immediate when $\ell(f_0)=0$.

Now fix $c=c_0$, and
$\Phi_{c_0}=\varphi+c_0|s|^{2k}$. It follows that,
\[
 M(c_0)^{-1}
   =\sup\{|\ell(h)|^2:h\in A^2(U,\varphi), \|h\|_{c_0}\leqslant 1\}=\|\ell\|_{c_0}^2.
\]
The set $I$ containing all holomorphic functions in $A^2(U,\varphi)$ vanishing on $S\cap U$ forms a
closed subspace. The functionals of the form $\ell=\sum_{j=1}^N \lambda_j \operatorname{EV}_{z_j}$ with $N\geqslant1$ and $z_j\in S\cap U$ are dense in $I^*=\{\ell\in A^2(U,\varphi)^*:\ell|_I=0\}$. It then follows from \eqref{equ:ci-functional-bound} and the density of finite combinations of evaluations on $S\cap U$ that
\begin{equation}\label{equ:ci-weighted-extension}
 \begin{split}
 \min_{\substack{h\in A^2(U,\varphi)\\h|_{S\cap U}=f}}
    \|h\|_{c_0}^2&=\sup_{0\ne\ell\in I^*}\frac{|\ell(f_0)|^2}{\|\ell\|^2_{c_0}}\\
    &\leqslant\frac{\pi^r}{r!}\Gamma(1+\frac{r}{k})c_0^{-r/k}
   \int_{S\cap U}\frac{|f|^2e^{-\varphi}}
      {|ds_1\wedge\cdots\wedge ds_r|^2}dV_S.
 \end{split}
\end{equation}

Since $|s|<1$, we have
$\|h\|_\varphi^2\leqslant e^{c_0}\|h\|_{c_0}^2$. The function
$e^{c_0}c_0^{-r/k}$ is minimized at $c_0=r/k$. Hence it follows from \eqref{equ:ci-weighted-extension} that there exists an extension
$\widetilde f_{\nu,k}$ on $U_\nu$ satisfying
\begin{equation*}
 \int_{U_\nu}|\widetilde f_{\nu,k}|^2e^{-\varphi}dV
 \leqslant\frac{\pi^r}{r!}\Gamma(1+\frac{r}{k})
              \left(\frac{ek}{r}\right)^{r/k}
 \int_{S\cap U_\nu}
      \frac{|f|^2e^{-\varphi}}
      {|ds_1\wedge\cdots\wedge ds_r|^2}dV_S.
\end{equation*}
For fixed $k$, the right-hand side is bounded independently of
$\nu$. Therefore, there exists a subsequence of $\{\widetilde f_{\nu,k}\}_{\nu}$, which is still denoted by itself,
converging locally uniformly on $D$ to a holomorphic
extension $\widetilde f_k$ of $f$. It follows from Fatou's lemma that
\[
 \int_D|\widetilde f_k|^2e^{-\varphi}dV
 \leqslant\frac{\pi^r}{r!}\Gamma(1+\frac{r}{k})
             \left(\frac{ek}{r}\right)^{r/k}
 \int_S
     \frac{|f|^2e^{-\varphi}}
     {|ds_1\wedge\cdots\wedge ds_r|^2}dV_S.
\]
Since
\[
 \log\Gamma(1+r/k)+\frac rk\log\frac{ek}{r}
       \longrightarrow0\qquad\text{as }k\to\infty.
\]
We could find a subsequence of $\{\widetilde f_k\}$, which is still denoted by itself, converging locally uniformly on $D$ to a holomorphic
extension $\widetilde f$ of $f$.
It again follows by Fatou's lemma that
$\widetilde f$ satisfying \eqref{equ:ci-extension}.
\end{proof}
\begin{remark}
    Recently Albesiano \cite{albesiano2025} and Takakura \cite{takakura2025} gave proofs of the non-optimal and optimal $L^2$ extension theorems, respectively, both via $L^2$ division. However, the estimates used in their proofs are different from that of Skoda's original $L^2$ estimates.
\end{remark}
\begin{remark}
    This shows that if every tuple of holomorphic functions is Skoda divisible with respect to a continuous function $\varphi$, then the optimal $L^2$ extension theorem holds with respect to $\varphi$. Consequently, by \cite{deng-ning-wang-zhou2023, deng-wang-zhang-zhou2024new-char}, $\varphi$ is plurisubharmonic. Thus, the $C^2$-regularity assumption in Theorem \ref{thm:flat-vector-converse} can be weakened to continuity.
\end{remark}

\end{sloppypar}
\end{document}